\documentclass[a4paper,10pt]{article}
\usepackage[dvipdfmx]{graphicx}
\usepackage{float}
\usepackage{latexsym,amssymb,amsmath,mathrsfs}
\usepackage{amsthm}
\usepackage{mathrsfs}
\usepackage[english]{babel}
\usepackage{comment}
%%%%%%%%%%%%%%%%%
\newtheorem{theorem}{Theorem}[section]
\newtheorem{remark}{Remark}[section]
\newtheorem{lemma}{Lemma}[section]
\newtheorem*{lemma*}{Lemma}

\newtheorem*{example*}{Example}

\newtheorem*{proposition*}{Prop}

\newtheorem*{question*}{Quest}

\newtheorem*{assume*}{Assume}

\newtheorem*{claim*}{Claim}
\newtheorem{corollary}{Corollary}[section]
\textheight25.3cm \textwidth16cm \hoffset=-1.9cm \voffset=-2.6cm
%%%%
\DeclareMathOperator{\Ric}{Ric}

\newcommand{\R}{\mathbb{R}}

\newcommand{\td}{\mathrm{d}}

\newcommand{\lra}{\longrightarrow}

\newcommand{\supp}{\mathop{\rm supp}\nolimits}

\newcommand{\ez}{\varepsilon}
\newcommand{\Lap}{\Delta}
\newcommand{\bs}{\ensuremath{\mathbf{s}}}
\newcommand{\e}{\mathrm{e}}

\renewcommand{\d}{\mathrm{d}}

\newcommand{\eps}{{\varepsilon}}

\def\Cut{\mathop{\mathrm{Cut}}\nolimits}
\title{Analysis of harmonic functions under lower bounds of $N$-weighted Ricci curvature with $\eps$-range}
\author{Yasuaki Fujitani\thanks{Department of Mathematics, Osaka University, Osaka 560-0043, Japan (\texttt{u197830k@ecs.osaka-u.ac.jp})}}
\begin{document}
\maketitle
\begin{abstract}
    The behavior of harmonic functions on Riemannian manifolds under lower bounds of the Ricci curvature has been studied from both analytic and geometric viewpoints.
    For example, some Liouville type theorems are obtained under lower bounds of the Ricci curvature. Recently, those results are generalized under lower bounds of the $N$-weighted Ricci curvature $\Ric_\psi^N$ with $N \in [n,\infty]$. 
    In this paper, we present a Liouville type theorem for harmonic functions of sublinear growth and a gradient estimate of harmonic functions on weighted Riemannian manifolds under weaker lower bounds of $\Ric_\psi^N$ with $N  < 0$.
    We also prove an $L^p$-Liouville theorem under lower bounds of $\Ric_\psi^N$ with $N \in [n,\infty]$ in a way different from that of \cite{jy-1}.
    Our results are obtained as a consequence of an argument under lower bounds of $\Ric_\psi^N$ with $\eps$-range, which is a unification of constant and variable curvature bounds.
    Among various methods considered for the analysis of harmonic functions, this paper focuses on methods using the Moser's iteration procedure.
    %We use functional inequalities under lower bounds of $N$-weighted Ricci curvature with $\eps$-range to conduct this procedure.
\end{abstract}
\tableofcontents
\section{Introduction}

The analysis of harmonic functions is closely related to comparison geometry concerning lower bounds of the Ricci curvature.
As a pioneering work, Yau showed in 1975 that harmonic functions on complete Riemannian manifolds with $\Ric\geq 0$ are limited to constants \cite{yau-1975}.
%The researches of harmonic functions under lower Ricci curvature bounds are generalized to those under lower weighted Ricci curvature bounds.
Since then, such a Liouville-type theorem has been intensively studied and there are recently a number of generalizations to weighted Riemannian manifolds.

This paper is concerned with the $N$-weighted Ricci curvature $\Ric_\psi^N$, which is a generalization of the Ricci curvature for weighted Riemannian manifolds. Here,  $\psi$ is a weight function and $N$ is a real parameter called the effective dimension.
Many comparison geometric results similar to those for Riemannian manifolds with Ricci curvature bounded from below by $K$ and dimension bounded from above by $N$ also hold if we assume the condition $\Ric_\psi^N\geq K$ for $K\in\mathbb{R}$. 
Especially the case of $N\in [n,\infty]$ is now classical and well investigated.
For example, under lower bounds of $\Ric_\psi^{\infty}$, 
a Liouville type theorem for harmonic functions of sublinear growth and 
a gradient estimate of harmonic functions were obtained in \cite{munteau_wang-1,soliton}. These results were also new in that they were proved by combining the Moser's iteration argument and the Bochner inequality. 
In \cite{jy-1}, Wu obtained some Liouville type theorems for $L^p$-functions, which is a generalization of \cite{li-schoen}, by using the Moser's iteration argument.

Recently, the $N$-weighted Ricci curvature with $N \in (-\infty,1]$ is getting more attention. 
For this range, some Poincar\'{e} inequalities \cite{milman} (see also \cite{mai} for its rigidity), Beckner inequality \cite{gentil} and the curvature-dimension condition \cite{ohta-negative,needle} were studied.
%We refer to \cite{needle} for the case $N = 0$.
Considering the range $N\in(-\infty,1]$ is also meaningful in the analysis of porous medium equations, for which we refer to \cite{ohta-takatsu,yasuaki2}, for instance.

In this paper, we prove Liouville type theorems under lower bounds of $\Ric_\psi^N$ for some $N\in (-\infty,1]$.
%The main results of this paper are a Liouville type theorem for harmonic functions of sublinear growth and a gradient estimate of harmonic functions under lower $N$-weighted Ricci curvature bounds for $N < 0$.
Although it is known that the Liouville type theorem under lower bounds of $\Ric_\psi^N$ for $N\in[n,\infty)$ can also be obtained by using the Li-Yau gradient estimate as in \cite{wu}, this method cannot be directly applied especially in the region of $N \leq 1$. In this paper, we take an alternative approach which uses the Moser's iteration argument. 
%We also contribute to the $L^p$-Liouville theorem under lower bounds of $N$-weighted Ricci curvature for $N\in[n,\infty]$ by presenting proofs different from that of \cite{jy-1}.

We need the Sobolev inequality and some volume growth estimates to conduct the Moser's iteration procedure.
Since some comparison theorems (such as the Bishop-Gromov volume comparison theorem and the Laplacian comparison theorem) under constant curvature bounds $\Ric_\psi^N\geq K$ hold only for $N\in[n,\infty)$ and fail for $N\in (-\infty,1]\cup \{\infty\}$, 
we face essential difficulties when trying to conduct the Moser's iteration procedure under constant lower bounds of $\Ric_\psi^N$ with $N \in(-\infty,1]\cup\{\infty\}$.
Therefore, we consider variable curvature bounds called the $\eps$-range and conduct the procedure under lower bounds of $\Ric_\psi^N$ with $\eps$-range. %and boundedness of the weight functions. %to prove the Liouville theorem under $\Ric_N\geq 0$ with $N < 0$.

We briefly review the notion of $\eps$-range, which is a unification of constant and variable curvature bounds by choosing appropriate $\eps$.
Wylie-Yeroshkin \cite{yoroshikin} first introduced a variable curvature bound 
\begin{equation*}
    \Ric_\psi^1 \geq K\e^{-\frac{4}{n-1}\psi}g
\end{equation*}
associated with the weight function $\psi$, and established several comparison theorems including the Bishop-Gromov volume comparison theorem. They were then generalized to 
\begin{equation*}
    \Ric_\psi^N\geq K\e^{\frac{4}{N-n}\psi}g
\end{equation*}
with $N\in (-\infty, 1)$ by Kuwae-Li \cite{kuwae_li}. In \cite{lu}, Lu-Minguzzi-Ohta gave a further generalization of the form 
\begin{equation}\label{eps-range-intro}
    \Ric_\psi^N\geq K\e^{\frac{4(\eps-1)}{n-1}\psi}g
\end{equation}
for an additional parameter $\eps$ in an appropriate range called the $\eps$-range which depends on $N$ (we also refer to \cite{lu2} for a preceding work on singularity theorems in Lorentz-Finsler geometry).
%In this paper, we slightly modify the local Sobolev inequality under lower bounds of the $N$-weighted Ricci curvature with $\eps$-range obtained in \cite{yasuaki}, which is a key ingredient for the Moser's iteration arguments.
%In particular, we derive gradient estimates for various Lipuville theorems and harmonic functions using Moser's iteration under situations where variable curvature bounds with $\eps$-range and weight functions are bounded.

In this paper, using mean value inequalities obtained by the Moser's iteration procedure under lower bounds of $\Ric_\psi^N$ with $\eps$-range, we investigate the behavior of harmonic functions. 
One of our main contributions is to present different proofs of the $L^p$-Liouville theorem under lower bounds of $\Ric^{\infty}_{\psi}$ in \cite{jy-1}
(see also Remark \ref{remark-elliptic-harnack}). 
%Our proof is different from that in \cite{jy-1} since we do not go through the Harnack type inequality to prove the claim (we also refer to the introduction of Section 4).
%Along the proof of the $L^p$-Liouville theorem, we observe the difficulty of proving the $L^p$-Liouville theorem for $N \in (-\infty,1]$.
We also contribute to a generalization of the Liouville theorem for harmonic functions of sublinear growth (Theorem \ref{liouville-negative}), which is obtained in \cite{munteau_wang-1} under $\Ric_\psi^{\infty}\geq 0$.
We can obtain this generalization as a consequence of arguments under lower bounds of $\Ric_\psi^N$ with $\eps$-range because the variable curvature bound with $\eps$-range degenerates into the constant curvature bound when we consider the non-negative curvature.
In addition, we show a gradient estimate for harmonic functions under lower bounds of $\Ric_\psi^N$ with $\eps$-range (Theorem \ref{grad-est-eps-range}), which is similar to the gradient estimate in \cite{soliton} under lower bounds of $\Ric_\psi^\infty$.
%Although, this gradient estimate is not a generalization of that in \cite{soliton}, we apply the argument to show this estimate.

The organization of this paper is as follows. 
In Section 2, we briefly review the weighted Ricci curvature with $\eps$-range, the elliptic Harnack inequality obtained by the Moser's iteration argument, various Liouville theorems and the local Poincar\'{e} inequality. 
In Section 3, we show a mean value inequality under lower bounds of $\Ric_\psi^N$ with $\eps$-range.
In Section 4, we prove $L^p$-Liouville type theorems under lower bounds of $\Ric_\psi^N$ with $N\in[n,\infty]$ in two different ways.
In Section 5, we show a Liouville type theorem for harmonic functions of sublinear growth under $\Ric_\psi^N\geq 0$ for $N\in (-\infty,0)\cup [n,\infty]$ with bounded weight functions. 
In Section 6, we present a gradient estimate of harmonic functions under lower bounds of $\Ric_\psi^N$ for $N\in (-\infty,0)\cup [n,\infty]$ with  $\eps$-range using the elliptic Harnack inequality.

\section{Preliminaries}
\subsection{$\eps$-range}
Let $(M,g,\mu)$ be an $n$-dimensional weighted Riemannian manifold.
In this paper, we assume that the diameter of $M$ is not bounded.
We set $\mu = \e^{-\psi}v_g$ where $v_g$ is the Riemannian volume measure and $\psi$ is a $C^{\infty}$ function on $M$.
For $N\in(-\infty,1]\cup[n,+\infty]$, the \emph{$N$-weighted Ricci curvature} is defined as follows:
\begin{align*}
\Ric_\psi^N:=\Ric_g+{\rm \nabla^2}\psi-\frac{\d\psi\otimes \d\psi}{N-n},
\end{align*}
where when $N=+\infty$,
the last term is interpreted as the limit $0$
and when $N=n$,
we only consider a constant function $\psi$, and
set $\Ric_\psi^n:=\Ric_g$.
 
In \cite{lu}, they introduced the notion of \textit{$\eps$-range}:
\begin{equation}\label{epsilin-range}
\eps = 0 \mbox{ for } N = 1, \quad |\eps| < \sqrt{\frac{N-1}{N-n}} \mbox{ for } N\neq 1,n, \quad \eps\in \mathbb{R}\mbox{ for } N = n.
\end{equation}
In this $\eps$-range, for $K\in\mathbb{R}$, they considered the condition
\[ \Ric_\psi^N(v)\ge K \e^{\frac{4(\ez-1)}{n-1}\psi(x)} g(v,v),\quad v\in T_xM.\]
We also define a constant $c$ associated with $\eps$ as
\begin{equation}\label{c_num}
c = \frac{1}{n-1}\left(1- \eps^2\frac{N-n}{N-1}\right) > 0
\end{equation}
for $N\neq 1$ and $c = (n-1)^{-1}$ for $N = 1$.
We define the comparison function $\bs_{\kappa}$ as
\begin{equation}\label{eq:bs}
\bs_{\kappa}(t) := \begin{cases}
\frac{1}{\sqrt{\kappa}} \sin(\sqrt{\kappa}t) & \kappa>0, \\
t & \kappa=0, \\
\frac{1}{\sqrt{-\kappa}} \sinh(\sqrt{-\kappa}t) & \kappa<0.
\end{cases}
\end{equation}
We denote $B_x(r)=\{ y \in M \,|\, d(x,y)<r \}$, $B_x(r,R) = \{y\in M\  | \ r < d(x,y) < R\}$, $V_x(r) = \mu(B_x(r))$ and $V_x(r,R) = \mu(B_x(r,R))$. %and $tB = B(x,tr)$ if $B = B(x,r)$.
We set 
\begin{equation*}
    K_\eps(x) = \max\left\{ 0, \sup_{\ v\in U_xM}\left(-\e^{\frac{-4(\eps-1)}{n-1}\psi(x)} \Ric_{\psi}^N(v) \right)\right\}
\end{equation*}
and
\begin{equation*}
    K_\eps(q,R) =  \sup_{x \in B_q(R)}K_\eps(x)
\end{equation*}
for $q\in M$ and $R > 0$, where $U_x M$ denotes the set of unit vectors in $T_xM$. We note that we have $K_{\eps}(x)\geq 0$ and $K_{\eps}(q,R)\geq 0$.
The \emph{weighted Laplacian} $\Delta_\psi$ on $(M,g,\mu)$ is defined as 
\begin{equation*}
    \Delta_\psi = \Delta - \langle \nabla\psi,\nabla \cdot  \rangle.
\end{equation*}
A smooth function $u$ is called a \emph{$\psi$-harmonic function} when $\Delta_\psi u = 0$ and a $\psi$-subharmonic function when $\Delta_\psi u\geq 0$.

We briefly review the argument in \cite{lu} (where, more generally, Finsler manifolds equipped with measures were considered).
For a unit tangent vector $v \in T_xM $,
let $\eta:[0,l) \lra \R$ be the geodesic with $\dot{\eta}(0)=v$.
We take an orthonormal basis $\{e_i\}_{i=1}^n$ of $T_xM$ with $e_n=v$
and consider the Jacobi fields
\[ E_i(t):=(\td\exp_x)_{tv}(te_i), \quad i=1,2,\ldots,n-1, \]
along $\eta$.
We define the $(n-1) \times (n-1)$ matrix $A(t)=(a_{ij}(t))$ by
\[ a_{ij}(t):=g \big( E_i(t),E_j(t) \big). \]
We set
\[ h_0(t):=(\det A(t))^{1/2(n-1)}, \qquad
h(t) := \e^{-c\psi(\eta(t))}\big(\! \det A(t) \big)^{c/2}, \qquad
h_1(\tau):=h \big( \varphi_\eta^{-1}(\tau) \big) \]
for $t \in [0,l)$ and $\tau \in [0,\varphi_{\eta}(l))$, where
\begin{equation}\label{varphi-def}
\varphi_{\eta}(t) :=\int_0^t \e^{\frac{2(\ez -1)}{n-1}\psi(\eta(s))} \,\td s.
\end{equation}
By the definition, we have the following relationship:
\[ (\e^{-\psi(\eta)} h_0^{n-1})(t) =h(t)^{1/c} =h_1\big( \varphi_{\eta}(t) \big)^{1/c}. \]
According to the argument in \cite[Theorem 3.6]{lu}, the condition $\Ric_\psi^N(v)\ge K \e^{\frac{4(\ez-1)}{n-1}\psi(x)} g(v,v)$ implies that
\begin{equation}\label{non_increasing_property}
(\e^{-\psi(\eta)} h_0^{n-1})/\bs_{cK}(\varphi_{\eta})^{1/c} \mbox{ is non-increasing.}
\end{equation}
This plays a key role in proving the following Laplacian comparison theorem and the Bishop-Gromov volume comparison theorem.
 
\begin{theorem}(\cite[Theorem 3.9]{lu}, Laplacian comparison theorem)\label{laplacian_comparison_theorem}
    Let $(M, g, \mu)$ be an $n$-dimensional complete weighted Riemannian manifold and $N \in (-\infty,1] \cup [n,+\infty]$,
    $\ez \in \R$ in the $\ez$-range \eqref{epsilin-range}, $K \in \R$ and $b \ge a>0$.
    Assume that
    \[ \Ric_\psi^N(v)\ge K \e^{\frac{4(\ez-1)}{n-1}\psi(x)} g(v,v)\]
    holds for all $v \in T_xM \setminus 0$ and
    \[ a \le \e^{-\frac{2(\ez-1)}{n-1}\psi} \le b. \]
    Then, for any $z \in M$, the distance function $r(x):=d(z,x)$ satisfies
    \[ \Lap_{\psi} r(x) \le \frac{1}{c\rho} \frac{\bs'_{cK}(r(x)/b)}{\bs_{cK}(r(x)/b)} \]
    on $M \setminus (\{z\} \cup \Cut(z))$, where $\rho:=a$ if $\bs'_{cK}(r(x)/b) \ge 0$
    and $\rho:=b$ if $\bs'_{cK}(r(x)/b)<0$ and $\Cut(z)$ denotes the cut locus of $z$.
\end{theorem}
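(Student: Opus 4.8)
The plan is to follow the classical Riemannian Laplacian comparison argument, but transported through the reparametrization $\varphi_\eta$ that linearizes the variable curvature bound. First I would recall the basic identity: along the geodesic $\eta$ from $z$ with $\eta(r(x)) = x$, the weighted Laplacian of the distance function can be written in terms of $h_0$ and $\psi$ as
\[
\Lap_\psi r(x) = \frac{\d}{\d t}\log\big(\e^{-\psi(\eta(t))} h_0(t)^{n-1}\big)\Big|_{t = r(x)}
= \frac{1}{c}\,\frac{h'(t)}{h(t)}\Big|_{t=r(x)},
\]
using the relation $(\e^{-\psi(\eta)}h_0^{n-1}) = h^{1/c}$ from the excerpt. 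So the entire problem reduces to bounding $h'/h$ at $t = r(x)$.

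Next I would pass to the reparametrized function $h_1 = h \circ \varphi_\eta^{-1}$. Writing $\tau = \varphi_\eta(t)$, the chain rule together with $\varphi_\eta'(t) = \e^{\frac{2(\ez-1)}{n-1}\psi(\eta(t))}$ gives
\[
\frac{h'(t)}{h(t)} = \e^{\frac{2(\ez-1)}{n-1}\psi(\eta(t))}\,\frac{h_1'(\tau)}{h_1(\tau)}.
\]
The key input \eqref{non_increasing_property} says that $h_1(\tau)/\bs_{cK}(\tau)^{1/c}$ is non-increasing in $\tau$ (since $(\e^{-\psi}h_0^{n-1})/\bs_{cK}(\varphi_\eta)^{1/c} = h_1/\bs_{cK}^{1/c}$ up to the power $1/c$, and raising a positive non-increasing function to the positive power $1/c$... actually more carefully, $h_1^{1/c}/\bs_{cK}(\varphi_\eta)$ non-increasing is the statement, which is equivalent to $h_1/\bs_{cK}^c$ being non-increasing, so I would track the exponents honestly here). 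Taking the logarithmic derivative of the non-increasing quantity yields
\[
\frac{h_1'(\tau)}{h_1(\tau)} \le \frac{1}{c}\,\frac{\bs_{cK}'(\tau)}{\bs_{cK}(\tau)}
\]
when $\bs_{cK}'(\tau) \ge 0$, and the inequality must be handled with care when $\bs_{cK}'$ changes sign. Combining the three displays gives
\[
\Lap_\psi r(x) \le \frac{1}{c}\,\e^{\frac{2(\ez-1)}{n-1}\psi(\eta(r(x)))}\,\frac{1}{c}\,\frac{\bs_{cK}'(\varphi_\eta(r(x)))}{\bs_{cK}(\varphi_\eta(r(x)))}.
\]

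The remaining work — and this is the main obstacle — is to replace the intrinsic quantities $\e^{\frac{2(\ez-1)}{n-1}\psi(\eta(r(x)))}$ and $\varphi_\eta(r(x))$ by expressions involving only $r(x)$, $a$, $b$, and the final monotonicity of $\bs_{cK}'/\bs_{cK}$. From the hypothesis $a \le \e^{-\frac{2(\ez-1)}{n-1}\psi} \le b$ we get immediately $\e^{\frac{2(\ez-1)}{n-1}\psi(\eta(r))} \le 1/a =: 1/\rho$ in the case $\bs_{cK}' \ge 0$; and integrating the same two-sided bound along $\eta$ gives $r/b \le \varphi_\eta(r) \le r/a$. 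The delicate point is that $t \mapsto \bs_{cK}'(t)/\bs_{cK}(t)$ is non-increasing (on the relevant interval where $\bs_{cK} > 0$), so $\bs_{cK}'(\varphi_\eta(r))/\bs_{cK}(\varphi_\eta(r)) \le \bs_{cK}'(r/b)/\bs_{cK}(r/b)$ exactly when $\varphi_\eta(r) \ge r/b$, which holds; and when $\bs_{cK}'(\varphi_\eta(r)) < 0$ one must flip to the other bound $\e^{\frac{2(\ez-1)}{n-1}\psi} \le 1/b$ while also using $\varphi_\eta(r) \le r/a$ together with a sign analysis to ensure the inequality still points the right way. This case bookkeeping on the sign of $\bs_{cK}'$ — together with verifying that the reparametrized comparison point stays in the range where $\bs_{cK}$ is positive (so that $r(x)$ does not exceed the relevant conjugate-type value) — is where the real care is needed; the rest is the standard Jacobi-field/Riccati comparison packaged inside \eqref{non_increasing_property}.
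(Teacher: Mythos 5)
Your skeleton is the right one, and in fact it is the route of Lu--Minguzzi--Ohta: the paper you are working from offers no proof of Theorem \ref{laplacian_comparison_theorem} at all, it simply quotes \cite[Theorem 3.9]{lu}, and your steps (write $\Lap_\psi r=\frac{1}{c}h'/h$ via $\e^{-\psi(\eta)}h_0^{n-1}=h^{1/c}$, pass to $\tau=\varphi_\eta(t)$, invoke \eqref{non_increasing_property}) are exactly how that proof goes. However, the exponent bookkeeping that you flagged and left unresolved is a genuine gap, not a cosmetic one. Since $\e^{-\psi(\eta)}h_0^{n-1}=h_1(\varphi_\eta)^{1/c}$, property \eqref{non_increasing_property} says that $\bigl(h_1(\varphi_\eta)/\bs_{cK}(\varphi_\eta)\bigr)^{1/c}$ is non-increasing, equivalently that $h_1(\tau)/\bs_{cK}(\tau)$ is non-increasing; its logarithmic derivative gives $h_1'/h_1\le \bs_{cK}'/\bs_{cK}$ with \emph{no} factor $\frac{1}{c}$ and with no sign restriction on $\bs_{cK}'$. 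Your version $h_1'/h_1\le\frac{1}{c}\,\bs_{cK}'/\bs_{cK}$ inserts a second $\frac{1}{c}$, and your combined display bounds $\Lap_\psi r$ by $\frac{1}{c^2}\e^{\frac{2(\ez-1)}{n-1}\psi}\,\bs_{cK}'(\varphi_\eta(r))/\bs_{cK}(\varphi_\eta(r))$; since $c\le\frac{1}{n-1}\le 1$ this is weaker than, and in any case not, the asserted bound $\frac{1}{c\rho}\bs_{cK}'(r/b)/\bs_{cK}(r/b)$. The single $\frac{1}{c}$ in the statement must come only from the identity $\Lap_\psi r=\frac{1}{c}\varphi_\eta'(r)\,h_1'(\tau)/h_1(\tau)$.

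The endgame also needs repair. The dichotomy in the theorem is governed by the sign of $\bs_{cK}'(r(x)/b)$, and in \emph{both} cases one uses $\varphi_\eta(r)\ge r/b$ together with the fact that $\bs_{cK}'/\bs_{cK}$ is non-increasing on the interval where $\bs_{cK}>0$; the bound $\varphi_\eta(r)\le r/a$ is never needed. When $\bs_{cK}'(r/b)\ge 0$ you bound $\e^{\frac{2(\ez-1)}{n-1}\psi}\le 1/a$ (the subcase $\bs_{cK}'(\varphi_\eta(r))<0$ is then trivial because the target right-hand side is nonnegative). When $\bs_{cK}'(r/b)<0$ the inequality $\e^{\frac{2(\ez-1)}{n-1}\psi}\le 1/b$ that you propose to switch to is false in general: the hypothesis gives $1/b\le\e^{\frac{2(\ez-1)}{n-1}\psi}\le 1/a$, and it is the \emph{lower} bound $\ge 1/b$ that is used, because multiplying the negative quantity $\bs_{cK}'(r/b)/\bs_{cK}(r/b)$ by a factor at least $1/b$ pushes the estimate down to $\frac{1}{cb}\bs_{cK}'(r/b)/\bs_{cK}(r/b)$, which is exactly the case $\rho=b$. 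With these two corrections (and the positivity of $\bs_{cK}$ at $\varphi_\eta(r(x))$ when $K>0$, which is part of the comparison package in \cite{lu}), your outline does yield the theorem; as written, it does not.
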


\begin{theorem}(\cite[Theorem 3.11]{lu}, Bishop-Gromov volume comparison theorem)\label{bishop_gromov}
    Under the same assumptions as Theorem \ref{laplacian_comparison_theorem},
    we have
    \[ \frac{V_x(R)}{V_x(r)}
    \le \frac{b}{a}
    \frac{\int_0^{\min\{R/a,\,\pi/\sqrt{cK}\}} \bs_{cK}(\tau)^{1/c} \,\d\tau}{\int_0^{r/b} \bs_{cK}(\tau)^{1/c} \,\d\tau} \]
    for all $x\in M$ and $0<r<R$, where $R \le b\pi/\sqrt{cK}$ when $K>0$
    and we set $\pi/\sqrt{cK}:=\infty$ for $K \le 0$.
\end{theorem}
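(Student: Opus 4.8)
\emph{Proof sketch.}
The plan is to integrate the monotonicity \eqref{non_increasing_property} against the sphere measure, following the classical Bishop--Gromov scheme. Fix $x$ and work in geodesic polar coordinates: let $\d\theta$ be the volume measure on the unit sphere $U_xM\subset T_xM$, and for $v\in U_xM$ let $\eta_v$ be the unit-speed geodesic with $\dot\eta_v(0)=v$ and $\tau(v)\in(0,+\infty]$ the distance from $x$ to its cut point. On $M\setminus\Cut(x)$ the measure $\mu=\e^{-\psi}v_g$ has polar density $\big(\e^{-\psi(\eta_v)}h_0^{n-1}\big)(t)$ with respect to $\d t\,\d\theta(v)$; extending the radial density by $0$ past the cut value, i.e.\ setting $f_v(t):=\big(\e^{-\psi(\eta_v)}h_0^{n-1}\big)(t)\,\1_{\{t<\tau(v)\}}$, we have $V_x(R)=\int_{U_xM}\int_0^R f_v(t)\,\d t\,\d\theta(v)$ for all $R>0$. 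By \eqref{non_increasing_property}, for each $v$ the ratio $t\mapsto f_v(t)/\bs_{cK}(\varphi_{\eta_v}(t))^{1/c}$ is non-increasing on $(0,\tau(v))$, and it stays non-increasing afterwards since $f_v$ then vanishes; when $K>0$ one also records --- again from \eqref{non_increasing_property}, which forces $\e^{-\psi(\eta_v)}h_0^{n-1}$ to vanish wherever its comparison $\bs_{cK}(\varphi_{\eta_v})^{1/c}$ does --- the Myers-type fact that $\varphi_{\eta_v}(\tau(v))\le\pi/\sqrt{cK}$, so $f_v$ is already $0$ before $\bs_{cK}(\varphi_{\eta_v}(\cdot))$ reaches its first zero.

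Next I would apply the elementary lemma: if $g_1,g_2>0$ with $g_1/g_2$ non-increasing, then $t\mapsto\int_0^t g_1 \big/ \int_0^t g_2$ is non-increasing. With $g_1=f_v$ and $g_2=\bs_{cK}(\varphi_{\eta_v}(\cdot))^{1/c}$ (restricted, when $K>0$, to the range where $g_2>0$, the Myers fact above ensuring nothing is lost), this gives, for all $0<r<R$,
\[ \frac{\int_0^R f_v(t)\,\d t}{\int_0^r f_v(t)\,\d t}\ \le\ \Theta_v(r,R):=\frac{\int_0^R \bs_{cK}(\varphi_{\eta_v}(t))^{1/c}\,\d t}{\int_0^r \bs_{cK}(\varphi_{\eta_v}(t))^{1/c}\,\d t}. \]
Writing $\int_0^R f_v\,\d t\le\Theta_v(r,R)\int_0^r f_v\,\d t$ for each $v$ and integrating over $U_xM$, we obtain $V_x(R)\le\big(\sup_{v\in U_xM}\Theta_v(r,R)\big)V_x(r)$, so the theorem reduces to a uniform bound on $\Theta_v(r,R)$.

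For that last step I would change variables $\tau=\varphi_{\eta_v}(t)$ in both integrals. Since $\d\tau/\d t=\e^{\frac{2(\eps-1)}{n-1}\psi}$, the hypothesis $a\le\e^{-\frac{2(\eps-1)}{n-1}\psi}\le b$ yields simultaneously $a\,\d\tau\le\d t\le b\,\d\tau$ and $t/b\le\varphi_{\eta_v}(t)\le t/a$. Hence the numerator of $\Theta_v(r,R)$ is at most $b\int_0^{\min\{R/a,\,\pi/\sqrt{cK}\}}\bs_{cK}(\tau)^{1/c}\,\d\tau$ (the cap $\pi/\sqrt{cK}$ being forced by the Myers fact when $K>0$, and $\bs_{cK}(\tau)^{1/c}\ge0$ on $[0,\pi/\sqrt{cK}]$ allowing the upper limit to be enlarged to it), while the denominator is at least $a\int_0^{r/b}\bs_{cK}(\tau)^{1/c}\,\d\tau$, using $\varphi_{\eta_v}(r)\ge r/b$ together with $r/b<\pi/\sqrt{cK}$ --- this last inequality being exactly why the hypothesis $R\le b\pi/\sqrt{cK}$ (when $K>0$) is imposed. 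Combining, $\Theta_v(r,R)\le\frac{b}{a}\cdot\frac{\int_0^{\min\{R/a,\pi/\sqrt{cK}\}}\bs_{cK}(\tau)^{1/c}\,\d\tau}{\int_0^{r/b}\bs_{cK}(\tau)^{1/c}\,\d\tau}$, uniformly in $v$, which is the asserted estimate.

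I expect the main obstacle to be the bookkeeping at the cut locus and, for $K>0$, at the first zero of $\bs_{cK}(\varphi_{\eta_v}(\cdot))$: the monotone-ratio lemma must be applied only where the comparison density is strictly positive, and one has to verify that the Myers-type vanishing of $f_v$ extracted from \eqref{non_increasing_property} really lets the argument run over the whole admissible range $0<r<R\le b\pi/\sqrt{cK}$. Apart from this, the argument is the standard Bishop--Gromov packaging, with the weighted radial density $\e^{-\psi(\eta_v)}h_0^{n-1}$ and the reparametrisation $\varphi_{\eta_v}$ replacing the Riemannian radial Jacobian, and with the two-sided bound on $\e^{-\frac{2(\eps-1)}{n-1}\psi}$ supplying the factor $b/a$ and the rescalings $R/a$, $r/b$.
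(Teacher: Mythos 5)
Your proposal is correct and follows essentially the route the paper relies on: the theorem is quoted from \cite[Theorem 3.11]{lu}, whose proof (as the paper indicates just before Theorem \ref{laplacian_comparison_theorem}) rests on integrating the monotonicity \eqref{non_increasing_property} in geodesic polar coordinates via Gromov's lemma, with the change of variables $\tau=\varphi_\eta(t)$ and the two-sided bound $a\le\e^{-\frac{2(\eps-1)}{n-1}\psi}\le b$ producing the factor $b/a$ and the rescaled limits $R/a$, $r/b$. This is exactly the scheme the paper itself executes for the relative volume comparison (Theorem \ref{rel-vol-comparison-thm}), so apart from writing out the Myers-type cap at $\pi/\sqrt{cK}$ and the cut-locus bookkeeping you already flag, nothing essential is missing.
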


Using this volume comparison theorem, we obtain the following Neumann-Poincar\'{e} inequality and local Sobolev inequality.

\begin{theorem}(\cite[Theorem 7]{yasuaki}, Neumann-Poincaré inequality)\label{poincare_thm}
    Let $(M,g,\mu)$ be an $n$-dimensional complete weighted Riemannian manifold and $N \in (-\infty,1] \cup [n,+\infty]$,
    $\ez \in \R$ in the $\ez$-range \eqref{epsilin-range}, $K > 0$ and $b \ge a>0$.
    Assume that
    \[ \Ric_\psi^N(v)\ge -K\e^{\frac{4(\ez-1)}{n-1}\psi(x)} g(v,v) \]
    holds for all $v \in T_xM \setminus 0$ and
    \begin{equation}
    a \le \e^{-\frac{2(\ez-1)}{n-1}\psi} \le b.
    \end{equation} 
    Then, we have
    \begin{equation}\label{poincare_inequality}
    \forall f \in C^{\infty}(M), \quad \int_{B_q(r)}|f - f_{B_q(r)}|^2 \ \d\mu \leq 2^{n+3}\left(\frac{2b}{a}\right)^{1/c}\exp\left(\sqrt{\frac{K}{c}}\frac{2r}{a}\right)r^2\int_{B_q(2r)}|\nabla f|^2 \ \d\mu
    \end{equation}
    for all $q\in M$ and $r > 0$, where
    \begin{equation*}
    f_{B_q(r)} := \frac{1}{V_q(r)}\int_{B_q(r)} f\  \d\mu.
    \end{equation*}
    \end{theorem}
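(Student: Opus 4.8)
The plan is to deduce the inequality from an integration-along-geodesics (``segment'') estimate in the spirit of Buser and Cheeger--Colding, using as the only geometric input the density monotonicity \eqref{non_increasing_property} (equivalently, Theorems \ref{laplacian_comparison_theorem} and \ref{bishop_gromov}). For $g\ge 0$ put $\Psi_g(y_1,y_2):=\int_0^{d(y_1,y_2)}g(\gamma_{y_1,y_2}(s))\,\d s$, where $\gamma_{y_1,y_2}$ is the (a.e.\ unique) minimal unit-speed geodesic from $y_1$ to $y_2$; if $y_1,y_2\in B_q(r)$ then $d(y_1,y_2)<2r$, so, splitting at the midpoint, the triangle inequality gives $\gamma_{y_1,y_2}(s)\in B_q(2r)$ for every $s$. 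By Jensen's inequality
\[
\int_{B_q(r)}|f-f_{B_q(r)}|^2\,\d\mu\le\frac{1}{V_q(r)}\int_{B_q(r)}\int_{B_q(r)}|f(y_1)-f(y_2)|^2\,\d\mu\,\d\mu ,
\]
and Cauchy--Schwarz along $\gamma_{y_1,y_2}$ gives $|f(y_1)-f(y_2)|^2\le d(y_1,y_2)\,\Psi_{|\nabla f|^2}(y_1,y_2)\le 2r\,\Psi_{|\nabla f|^2}(y_1,y_2)$. Thus it suffices to prove a segment inequality of the form $\int_{B_q(r)\times B_q(r)}\Psi_g\,\d\mu\,\d\mu\le C\,r\,V_q(r)\int_{B_q(2r)}g\,\d\mu$; combining these yields the theorem.

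\textbf{The segment inequality.}
Write $\int_0^{d}=\int_0^{d/2}+\int_{d/2}^{d}$; by the $y_1\leftrightarrow y_2$ symmetry it suffices to control the ``near $y_2$'' half. Fix $y_1$ and pass to geodesic polar coordinates at $y_1$, in which $\d\mu=\mathcal A(t,\xi)\,\d t\,\d\xi$ with $\mathcal A(t,\xi):=(\e^{-\psi(\eta)}h_0^{n-1})(t)$ along $\eta(s):=\exp_{y_1}(s\xi)$ (extend $\mathcal A$ by $0$ past the cut value, so that $\Cut(y_1)$, being null, is harmless). Keeping the constraint $y_2=\exp_{y_1}(t\xi)\in B_q(r)$, Fubini (interchanging the $s$- and $t$-integrals) bounds the ``near $y_2$'' half by
\[
\int_{U_{y_1}M}\int_0^{2r} g(\eta(s))\,W(s,\xi)\,\d s\,\d\xi ,\qquad W(s,\xi)\le\int_s^{\min\{2s,2r\}}\mathcal A(t,\xi)\,\d t ,
\]
where $W(s,\xi)$ vanishes unless $\eta(s)$ lies on a minimal geodesic from $y_1\in B_q(r)$ to a point of $B_q(r)$, hence unless $\eta(s)\in B_q(2r)$. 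The heart of the matter is a \emph{pointwise} density comparison: for $t/2\le s\le t\le 2r$, applying \eqref{non_increasing_property} with $K$ replaced by $-K$ (so that $\bs_{-cK}(\tau)=(cK)^{-1/2}\sinh(\sqrt{cK}\,\tau)$), and using $\varphi_\eta(t)\in[t/b,t/a]$ (from $a\le\e^{-\frac{2(\eps-1)}{n-1}\psi}\le b$) together with $\sinh x/\sinh y\le(x/y)\e^{x-y}$ for $x\ge y>0$, one gets
\[
\mathcal A(t,\xi)\le\Bigl(\frac{\bs_{-cK}(\varphi_\eta(t))}{\bs_{-cK}(\varphi_\eta(s))}\Bigr)^{1/c}\mathcal A(s,\xi)\le\Bigl(\frac{2b}{a}\Bigr)^{1/c}\exp\!\Bigl(\sqrt{\tfrac{K}{c}}\,\tfrac{2r}{a}\Bigr)\mathcal A(s,\xi) .
\]
Hence $W(s,\xi)\le 2r\,(2b/a)^{1/c}\exp(\sqrt{K/c}\cdot 2r/a)\,\mathcal A(s,\xi)$, and integrating back in $(s,\xi)$ and then in $y_1\in B_q(r)$ yields the segment inequality with $C$ a fixed multiple of $(2b/a)^{1/c}\exp(\sqrt{K/c}\cdot 2r/a)$.

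\textbf{Conclusion, and the hard part.}
Feeding $g=|\nabla f|^2$ into the above gives the asserted estimate: the factor $r^2$ is the Cauchy--Schwarz loss times the $r$ of the segment inequality, while $(2b/a)^{1/c}$ and $\exp(\sqrt{K/c}\cdot 2r/a)$ come verbatim from the density ratio, and the remaining numerical/dimensional constant is absorbed into $2^{n+3}$ (one can be wasteful here --- e.g.\ routing the last bound through the volume doubling furnished by Theorem \ref{bishop_gromov} rather than through the sharp pointwise comparison is what contributes the $2^n$). I expect the genuine obstacle to be the middle step: justifying the Fubini interchange together with the cut-locus/indicator bookkeeping that keeps the right-hand side supported in $B_q(2r)$, and above all converting the \emph{integrated} monotonicity \eqref{non_increasing_property} into a \emph{uniform pointwise} density comparison valid on the whole band $t/2\le s\le t$. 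That conversion is precisely the place where the substitution $\tau=\varphi_\eta(t)$ and the two-sided weight bound $a\le\e^{-\frac{2(\eps-1)}{n-1}\psi}\le b$ make the variable lower bound of $\Ric_\psi^N$ with $\eps$-range usable, and where the constant $c$ from \eqref{c_num} enters.
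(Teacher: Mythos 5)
Your proposal is correct and follows essentially the same route as the paper's proof (the one sketched for Theorem \ref{modified-neumann-poincare-thm} and taken from \cite[Theorem 7]{yasuaki}): reduce to a double integral by Jensen and Cauchy--Schwarz, split each geodesic at its midpoint, and convert the monotonicity \eqref{non_increasing_property} together with $a \le \e^{-\frac{2(\ez-1)}{n-1}\psi} \le b$ into a uniform volume-density comparison on the far half of the segment. Your polar-coordinate segment inequality is just the Fubini reformulation of the paper's lower bound $J_{x,t}(y)\ge 1/F(R)$ for the contraction maps $\Phi_{x,t}$ with $t\in[1/2,1]$, and it reproduces the same factors $(2b/a)^{1/c}\exp\bigl(\sqrt{K/c}\,2r/a\bigr)r^2$ with a numerical constant absorbed into $2^{n+3}$.
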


\begin{theorem}(\cite[Theorem 8]{yasuaki}, Local Sobolev inequality)\label{sobolev_thm}
    Under the same assumptions as Theorem \ref{poincare_thm} with $n\geq 3$,
     there exist positive constants $D, E$ depending on $c,a,b, n$ such that 
    \begin{equation*}
    \left(V_q(r)^{-1}\int_{B_q(r)}|f|^{\frac{2(1 + c)}{1-c}}\d\mu\right)^{\frac{1-c}{1 + c}}\leq E\exp\left(D\left(1 + \sqrt{\frac{K}{c}}\right)\frac{r}{a}\right)r^2 V_q(r)^{-1}\int_{B_q(r)}(|\nabla f|^2 + r^{-2} f^2)\ \d\mu
    \end{equation*}
    for all $q\in M$, $r > 0$ and $f\in C_0^{\infty}(B_q(r))$.
 \end{theorem}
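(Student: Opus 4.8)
The plan is to derive the Sobolev inequality from the Neumann-Poincar\'{e} inequality of Theorem \ref{poincare_thm} together with volume growth control extracted from Bishop-Gromov (Theorem \ref{bishop_gromov}), following the classical route of Saloff-Coste and Hajlasz-Koskela; the only genuinely delicate point is to pin down the Sobolev exponent and confine all $r$- and $K$-dependence to an exponential prefactor.

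First I would record two consequences of Theorem \ref{bishop_gromov}, applied with $-K$ in place of $K$, using the elementary bounds $t \le \bs_{-cK}(t) = (cK)^{-1/2}\sinh(\sqrt{cK}\,t) \le t\,\e^{\sqrt{cK}\,t}$: a local doubling estimate $V_q(2r) \le C_D\,V_q(r)$, and --- more importantly --- the relative volume lower bound $V_x(s) \ge c_0\,(s/r)^{\nu}\,V_q(r)$ for $x \in B_q(r)$ and $0 < s \le r$, where $\nu := (1+c)/c = c^{-1}+1$ is the homogeneous dimension of the model density $\bs_{cK}^{1/c}$ and both $C_D$ and $c_0^{-1}$ have the form $C(c,a,b,n)\exp\!\big(D'\sqrt{K/c}\,r/a\big)$. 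The lower bound follows from Theorem \ref{bishop_gromov} applied to $B_x(s)\subset B_x(2r)$ together with $B_q(r)\subset B_x(2r)$ for $x\in B_q(r)$ (so that the \emph{upper} bound on $V_x(2r)/V_x(s)$ turns into a lower bound on $V_x(s)$ relative to $V_q(r)$). Note $\nu > 2$ since $c\le (n-1)^{-1}\le 1/2$ for $n\ge3$; this is where the hypothesis $n\ge3$ enters.

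Next I would invoke the standard mechanism ``doubling $+$ $L^2$-Poincar\'{e} $+$ relative volume lower bound with exponent $\nu$ $\Longrightarrow$ $\big(\tfrac{2\nu}{\nu-2},2\big)$-Sobolev-Poincar\'{e}'' (as in Hajlasz-Koskela or Maheux-Saloff-Coste) on the ball $B_q(r)$, which by the previous step and Theorem \ref{poincare_thm} is available with all constants of the above exponential type. Since $\tfrac{2\nu}{\nu-2}=\tfrac{2(1+c)}{1-c}$ and $\tfrac{\nu-2}{\nu}=\tfrac{1-c}{1+c}$, this gives, for all $f\in C^\infty(M)$,
\[
\Big(V_q(r)^{-1}\!\!\int_{B_q(r)}\!|f-f_{B_q(r)}|^{\frac{2(1+c)}{1-c}}\,\d\mu\Big)^{\!\frac{1-c}{1+c}}\ \le\ C(c,a,b,n)\,\e^{D'\sqrt{K/c}\,r/a}\;r^2\,V_q(2r)^{-1}\!\!\int_{B_q(2r)}\!|\nabla f|^2\,\d\mu .
\]
Finally, for $f\in C_0^\infty(B_q(r))$ I would remove the average: by Jensen $|f_{B_q(r)}|\le\big(V_q(r)^{-1}\int_{B_q(r)}f^2\,\d\mu\big)^{1/2}$, so the triangle inequality in the normalized $L^{2(1+c)/(1-c)}(B_q(r))$ adds only the term $\big(V_q(r)^{-1}\int_{B_q(r)}f^2\,\d\mu\big)^{1/2}\le\big(r^2\,V_q(r)^{-1}\int_{B_q(r)}(|\nabla f|^2+r^{-2}f^2)\,\d\mu\big)^{1/2}$; and since $\supp f\subset B_q(r)$ and $V_q(2r)\ge V_q(r)$, the gradient term on the right-hand side above is bounded by the same constant times $r^2\,V_q(r)^{-1}\int_{B_q(r)}|\nabla f|^2\,\d\mu$. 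Squaring, using $(\alpha+\beta)^2\le2\alpha^2+2\beta^2$ and $\sqrt{K/c}\le1+\sqrt{K/c}$, yields the stated inequality with $D,E$ depending only on $c,a,b,n$.

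The main obstacle is the middle step: obtaining the Sobolev-Poincar\'{e} inequality with the \emph{fixed} exponent $\nu=(1+c)/c$ rather than the naive $\log_2 C_D$ (which would depend on $r$ and $K$), and keeping the $r$-dependence of the resulting constant purely exponential. This is precisely why the relative volume lower bound with the model dimension of $\bs_{cK}^{1/c}$ must be extracted in the first step. If one prefers not to use the metric-measure-space result as a black box, the same Sobolev-Poincar\'{e} inequality can instead be proved directly by a truncation/iteration argument on super-level sets, in the spirit of Saloff-Coste's proof of ``Poincar\'{e} $\Rightarrow$ Sobolev'', using only the doubling property, the relative volume lower bound, and Theorem \ref{poincare_thm}.
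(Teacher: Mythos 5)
Your argument is correct in outline, but it follows a genuinely different route from the one the paper (via \cite[Theorem 8]{yasuaki}, reproduced and generalized in Section 3 here) actually takes. The paper's proof is the pseudo-Poincar\'e/Nash route: one introduces the ball-average $\varphi_s(x)=V_x(s)^{-1}\int_{B_x(s)}\varphi\,\d\mu$, proves an $L^1\to L^2$ bound for $\varphi_s$ from Bishop--Gromov (Lemma \ref{claim-1}), proves $\|\varphi-\varphi_s\|_2\lesssim s\|\nabla\varphi\|_2$ by an explicit Vitali-type covering plus the Neumann--Poincar\'e inequality (Lemma \ref{claim-2}), optimizes over $s$ to get a Nash-type inequality of the form \eqref{lemma-2-eq-3}, and then upgrades Nash to the $L^{2\nu/(\nu-2)}$ Sobolev bound by iteration. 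You instead extract from Bishop--Gromov the doubling property and the relative volume lower bound $V_x(s)\ge c_0(s/r)^{\nu}V_q(r)$ with the fixed exponent $\nu=1+1/c$ (your observation that $n\ge 3$ forces $c\le 1/2$, hence $\nu>2$, is exactly right and matches how the paper handles $c<1$ versus the ad hoc choice $\nu=3$ when $c=1$), feed these together with Theorem \ref{poincare_thm} into the ``doubling $+$ $L^2$-Poincar\'e $+$ volume lower bound $\Rightarrow$ $(2\nu/(\nu-2),2)$-Sobolev--Poincar\'e'' machinery of Haj\l asz--Koskela/Maheux--Saloff-Coste, and then remove the mean by Cauchy--Schwarz, which is where the $r^{-2}f^2$ term legitimately appears. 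Both approaches rest on the same two inputs (Bishop--Gromov and the Neumann--Poincar\'e inequality), so yours buys brevity and modularity, while the paper's self-contained mollifier/covering argument buys explicit constants (explicit powers of $b/a$ and explicit exponential factors in $\sqrt{K/c}\,r/a$), which is what allows the same scheme to be rerun verbatim with the local curvature quantity $K_\eps(q,10R)$ in Theorem \ref{sobolev_thm_revisited}. The one point you should make explicit if you pursue your route is that the constant in the black-box implication depends only polynomially (with powers depending on $\nu$ alone) on the doubling constant, the Poincar\'e constant and $c_0^{-1}$; since each of these is of the form $C(c,a,b,n)\exp(D'\sqrt{K/c}\,r/a)$, this is what preserves the exponential shape $E\exp(D(1+\sqrt{K/c})r/a)$ demanded by the statement --- you flag this as the delicate point, and the truncation/chaining proofs do deliver it, but it deserves a precise citation or a short verification rather than an appeal to the theorem as a pure black box.
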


\subsection{Elliptic Harnack inequality} 
Using the Moser's iteration argument, we can obtain several mean value inequalities.
Combining these mean value inequalities, we can prove the elliptic Harnack inequality, which is one of the main applications of the Moser's iteration argument.
This subsection collects known results concerning these issues on unweighted Riemannian manifolds (i.e. $\mu = v_g$), which are generalized in Section 3.

In the following arguments in this paper, we fix a point $q\in M$.
For $p > 0$, we denote the $L^p$-norm on $B_q(R)$ by
\begin{equation*}
    \|f\|_{p,R} = \left( \int_{B_q(R)}|f|^p \ \d\mu \right)^{1/p}, \quad \|f\|_{\infty, R} = \sup_{B_q(R)}|f|.
\end{equation*}
Let $H^{1,2}(B_q(R))$ be the $(1,2)$-Sobolev space on $B_q(R)$ and $H_c^{1,2}(B_q(R))$ be the set of compact support elements in $H^{1,2}(B_q(R))$. 
We first present the following mean value inequality.
\begin{theorem}(\cite[Lemma 11.1]{p_li})\label{sub-lemma}
 Let $(M,g)$ be an $n$-dimensional complete Riemannian manifold and $u$ be a non-negative function in $H^{1,2}\left(B_q(R)\right)$ satisfying
$$
\Delta u \geq-f u,
$$
where $f$ is a non-negative function on $B_q(R)$.
Let $\nu_1=\frac{n}{2}$ when $n>2$, and $1<\nu_1<\infty$ be arbitrary when $n=2$. We assume that the $L^p$-norm of $f$ is finite for some $\nu_1<p \leq \infty$,
and that, for $\nu_2 > 0$ satisfying $\frac{1}{\nu_1}+\frac{1}{\nu_2}=1$, there exists a constant $C_s>0$ such that 
\begin{equation}\label{sobolev-constant-ineq}
\frac{1}{V_q(R)}\int_{B_q(R)}|\nabla \phi|^2\ \d v_g \geq \frac{C_s}{R^2}\left(\frac{1}{V_q(R)}\int_{B_q(R)} \phi^{2 \nu_2} \ \d v_g\right)^{1/\nu_2}
\end{equation}
for all $\phi\in H_c^{1,2}\left(B_q(R)\right)$. 
Given $0 < \theta < 1$, take a constant $C_v$ such that
\begin{equation}\label{p-li-volume-growth}
    \frac{V_q(R)}{V_q(\theta R)} \leq C_v.
\end{equation}
Then, for any $k>0$, there exists a constant $C_{sub}>0$ depending only on $k, \nu_1, p, C_s$, and $C_v$ such that
\begin{equation*}
\|u\|_{\infty, \theta R} \leq C_{sub}\left(\left(A R^2\right)^{p/(p-\nu_1)}+(1-\theta)^{-2}\right)^{\nu_1/k} \frac{\|u\|_{k, R}}{V_q(R)^{1/k}} ,
\end{equation*}
where
\begin{equation*}
    A = 
    \begin{cases}
        \left(\frac{1}{V_q(R)}\int_{B_q(R)}f^p\ \d v_g\right)^{1/p} & \quad \mbox{if } p < \infty,\\
        \|f\|_{\infty,R} & \quad \mbox{if } p = \infty.
    \end{cases}
\end{equation*}
\end{theorem}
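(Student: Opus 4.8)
The plan is to run Moser's iteration scheme, with the Sobolev-type inequality \eqref{sobolev-constant-ineq} and the volume-doubling bound \eqref{p-li-volume-growth} as the only geometric inputs. First I reduce to the case that $u$ is bounded on $B_q(R)$: one applies the estimate to the truncations $u_L:=\min\{u,L\}$, which are again nonnegative elements of $H^{1,2}(B_q(R))$ satisfying $\Delta u_L\ge -f u_L$ in the weak sense, and then lets $L\to\infty$ by monotone convergence. (If $0<k<1$ one should also replace $u$ by $u+\delta$ and let $\delta\to0$ at the very end, so that the negative powers appearing below remain integrable.) Fixing $\beta\ge1$ and a Lipschitz function $\phi$ with compact support in $B_q(R)$, I test $\Delta u\ge -f u$ against $\phi^2u^{2\beta-1}\ge0$ and integrate by parts; absorbing the gradient cross-term by Young's inequality and using $|\nabla(\phi u^\beta)|^2\le2\phi^2|\nabla u^\beta|^2+2u^{2\beta}|\nabla\phi|^2$, this yields the Caccioppoli-type inequality
\[
\int_{B_q(R)}|\nabla(\phi u^\beta)|^2\,\d v_g\ \le\ C_0\,\beta^2\Big(\int_{B_q(R)}u^{2\beta}|\nabla\phi|^2\,\d v_g+\int_{B_q(R)}\phi^2 f\,u^{2\beta}\,\d v_g\Big)
\]
with $C_0$ a universal constant.

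Next I feed $\phi u^\beta$ into \eqref{sobolev-constant-ineq}, which bounds the left-hand side above from below by $C_s R^{-2}V_q(R)^{1-1/\nu_2}\big(\int(\phi u^\beta)^{2\nu_2}\,\d v_g\big)^{1/\nu_2}$, so combining the two displays reduces the problem to controlling the potential term $\int\phi^2 f\,u^{2\beta}$. For that I use Hölder with exponents $(p,\,p/(p-1))$ — legitimate since $p>\nu_1$ forces $p/(p-1)<\nu_2$ — then interpolate the $L^{p/(p-1)}$-norm of $(\phi u^\beta)^2$ between $L^1$ and $L^{\nu_2}$, and finally absorb the $L^{\nu_2}$-factor back into the left-hand side with Young's inequality. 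Keeping the $\beta$-dependence polynomial and carrying the quantity $A$ through this chain, the outcome is, schematically, a reverse-Hölder inequality of the form
\[
\Big(\frac{1}{V_q(R)}\int_{B_q(r')}u^{2\beta\nu_2}\,\d v_g\Big)^{1/\nu_2}\ \le\ C_0\,\beta^2\Big((AR^2)^{\frac{p}{p-\nu_1}}+\frac{R^2}{(r-r')^2}\Big)\,\frac{1}{V_q(R)}\int_{B_q(r)}u^{2\beta}\,\d v_g
\]
for all $\theta R\le r'<r\le R$, where $\phi$ is taken to be a cutoff between $B_q(r')$ and $B_q(r)$ with $|\nabla\phi|\le2/(r-r')$; here the doubling bound \eqref{p-li-volume-growth} is used to replace averages over intermediate balls by averages over $B_q(R)$.

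Now I iterate. Put $r_i:=\theta R+(1-\theta)2^{-i}R$ and $2\beta_i:=\max\{k,2\}\,\nu_2^{i}$ (so every $\beta_i\ge1$, since $\nu_2>1$), and set $\Phi_i:=\big(V_q(R)^{-1}\int_{B_q(r_i)}u^{2\beta_i}\,\d v_g\big)^{1/2\beta_i}$. Applying the previous display with $r=r_i$, $r'=r_{i+1}$ gives $\Phi_{i+1}\le\big(C_0\beta_i^2\mathcal A_i\big)^{1/2\beta_i}\Phi_i$ with $\mathcal A_i\le C\big((AR^2)^{p/(p-\nu_1)}+4^{i}(1-\theta)^{-2}\big)$. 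Since $\sum_i(2\beta_i)^{-1}=\nu_1/\max\{k,2\}$ (using $1/\nu_1+1/\nu_2=1$) and $\sum_i i\,(2\beta_i)^{-1}<\infty$, the infinite product telescopes, and letting $i\to\infty$ (using $\|u\|_{L^{2\beta_i}(B_q(\theta R))}\to\|u\|_{\infty,\theta R}$) gives the assertion in the case $k\ge2$. For $0<k<2$ I bootstrap: run the iteration from exponent $2$, with cutoffs adapted to an arbitrary subinterval $[s,t]\subset[\theta R,R]$, to get $\|u\|_{\infty,s}\le C\big((AR^2)^{p/(p-\nu_1)}+R^2/(t-s)^2\big)^{\nu_1/2}\|u\|_{2,t}/V_q(t)^{1/2}$; then insert $\|u\|_{2,t}^2\le\|u\|_{\infty,t}^{2-k}\|u\|_{k,t}^{k}$, use Young's inequality to peel off a term $\tfrac12\|u\|_{\infty,t}$, and apply the elementary iteration lemma for functions satisfying $\psi(s)\le\tfrac12\psi(t)+(\text{terms depending on }t-s)$ to reach the claimed bound with the power $\nu_1/k$; \eqref{p-li-volume-growth} again reconciles the volume normalizations on the nested balls, so that $C_{sub}$ ends up depending only on $k,\nu_1,p,C_s,C_v$.

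The main obstacle is the Hölder–interpolation–Young step of the second paragraph: one must produce exactly the exponent $p/(p-\nu_1)$ while (i) keeping the dependence on $\beta$ at most polynomial — so that the product $\prod_i(C_0\beta_i^2)^{1/2\beta_i}$ converges — and (ii) correctly propagating the factors $AR^2$ and $(1-\theta)^{-2}$ through the estimate. Everything after that is the routine Moser bookkeeping together with the standard small-exponent bootstrap.
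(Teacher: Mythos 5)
The paper does not actually reprove this statement---it quotes it from \cite[Lemma 11.1]{p_li}---and your argument is precisely the Moser iteration that the cited source uses and that the paper itself reruns in the weighted setting in the proof of Theorem \ref{mean-eq-thm} (there with a constant potential, so without your H\"older step). Your treatment of the potential term is the right one and the exponent bookkeeping checks out: H\"older with exponents $(p,\,p/(p-1))$, interpolation of the $L^{p/(p-1)}$-norm between $L^1$ and $L^{\nu_2}$ with weight $\lambda=\nu_1/p$, and Young's inequality give the power $1/(1-\lambda)=p/(p-\nu_1)$ on $AR^2$ at the cost of a factor $\beta^{2\nu_1/(p-\nu_1)}$, which is polynomial in $\beta$, so $\prod_i(C\beta_i^2 4^i)^{1/(2\beta_i)}$ converges and $\sum_i(2\beta_i)^{-1}=\nu_1/\max\{k,2\}$ produces the stated exponent; the small-$k$ bootstrap via $\|u\|_{2,t}^2\le\|u\|_{\infty,t}^{2-k}\|u\|_{k,t}^k$ is also the standard one (the paper's Theorem \ref{mean-eq-thm} does the same step by iterating over radii directly instead of invoking the lemma for $\psi(s)\le\tfrac12\psi(t)+\dots$; both are fine).

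The one step that is wrong as written is the reduction to bounded $u$: $u_L:=\min\{u,L\}$ does \emph{not} satisfy $\Delta u_L\ge -fu_L$ weakly. Truncation from above is a concave operation, so it preserves supersolutions, not subsolutions; for instance with $f=0$ and $u$ harmonic and nonconstant, $\min\{u,L\}$ is superharmonic and fails $\Delta u_L\ge 0$. The difficulty this step was meant to handle (admissibility of $\phi^2u^{2\beta-1}$ as a test function and finiteness of the integrals when $u$ is unbounded and $\beta$ is large) is genuine, but the standard repair is to truncate inside the test function rather than to truncate the subsolution: test $\Delta u\ge -fu$ against $\phi^2\,u\,\min\{u,L\}^{2\beta-2}$ (equivalently, use a piecewise-power function of $u$ that is linear above level $L$), obtain the Caccioppoli inequality for $\phi\,u\,\min\{u,L\}^{\beta-1}$ with constants independent of $L$, and then let $L\to\infty$ by monotone convergence at each stage of the iteration. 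With that correction your proof is complete and coincides in substance with the cited proof and with the paper's own iteration scheme.
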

The following mean value inequality is also important in proving the elliptic Harnack inequality.
\begin{theorem}(\cite[Lemma 11.2]{p_li})\label{sup-lemma}
 Let $(M,g)$ be an $n$-dimensional complete Riemannian manifold and $u$ be a non-negative function in $H^{1,2}\left(B_q(R)\right)$ satisfying 
$$
\Delta u \leq A u
$$
for some constant $A \geq 0$. Let $\nu_1=\frac{n}{2}$ when $n>2$, and $1<\nu_1<\infty$ be arbitrary when $n=2$. For $\nu_2 > 0$ satisfying $\frac{1}{\nu_1}+\frac{1}{\nu_2}=1$, we assume 
\eqref{sobolev-constant-ineq} and \eqref{p-li-volume-growth} for $\theta = \frac{1}{16}$ with constants $C_s$ and $C_v$, respectively.
%that there exists a constant $C_s$ such that
%$$
%\frac{1}{V_q(R)}\int_{B_q(R)}|\nabla \phi|^2\ \d v_g \geq \frac{C_s}{R^2}\left(\frac{1}{V_q(R)}\int_{B_q(R)} \phi^{2 \nu_2}\ \d v_g\right)^{\frac{1}{\mu}}
%$$
%for all $\phi\in H_c^{1,2}\left(B_q(R)\right)$. 
In addition, assume that the first nonzero Neumann eigenvalues $\lambda_1\left(\frac{R}{4}\right)$ and $\lambda_1\left(\frac{R}{2}\right)$ of the balls $B_q\left(\frac{R}{4}\right)$ and $B_q\left(\frac{R}{2}\right)$ satisfy the estimate
\begin{equation}\label{p-li-neu-poincare}
    \min \left\{\frac{R^2}{16} \lambda_1\left(\frac{R}{4}\right), \frac{R^2}{4} \lambda_1\left(\frac{R}{2}\right)\right\} \geq C_{np}
\end{equation}
for some constant $C_{np}>0$.
%, and take a constant $C_v$ such that
%$$
%\frac{V_q(R)}{V_q\left(R/16\right)} \leq C_v .
%$$
Then, for $k>0$ sufficiently small, there exists a constant $C_{sup}>0$ depending only on the quantities $k, \nu_1, C_v, C_{np}, C_s$, and $\left(A R^2+1\right)$ such that
\begin{equation}\label{sup-mean-ineq}
\frac{\|u\|_{k, R/8}}{V_q(R/8)^{1/k}} \leq C_{sup} \inf _{B_q\left(R/16\right)} u .
\end{equation}
\end{theorem}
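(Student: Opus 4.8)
\emph{Strategy.} We follow the classical proof of the inner mean value inequality (cf. \cite[Lemma 11.2]{p_li}): run Moser's iteration with negative exponents, and then ``cross the exponent zero'' by combining a Caccioppoli estimate for $\log u$ with the John--Nirenberg inequality. The Neumann eigenvalue hypothesis \eqref{p-li-neu-poincare} is used only at this last step, which is why it does not appear in Theorem \ref{sub-lemma}. Throughout, fix $\delta > 0$, put $\bar u := u + \delta \ge \delta$, note $\Delta\bar u = \Delta u \le A u \le A\bar u$, prove the asserted bound for $\bar u$ with constants independent of $\delta$, and let $\delta\downarrow 0$ at the end.

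\emph{Reverse Moser iteration.} Test the weak form of $\Delta\bar u \le A\bar u$ against $\phi^2 \bar u^{2\beta-1}$ for $\beta < 0$; since $2\beta-1 < 0$ the term $-(2\beta-1)\int \phi^2 \bar u^{2\beta-2}|\nabla\bar u|^2$ has a favourable sign, and after an integration by parts and Young's inequality one obtains a Caccioppoli estimate
\[
\int \phi^2 \,|\nabla \bar u^{\beta}|^2 \,\d v_g \;\le\; C\,(1+|\beta|)^2\!\int \bigl( |\nabla\phi|^2 + A\phi^2 \bigr)\,\bar u^{2\beta}\,\d v_g .
\]
Feeding this into the Sobolev inequality \eqref{sobolev-constant-ineq}, using cutoffs supported in $B_q(R/8)$ that equal $1$ on a sequence of balls decreasing to $B_q(R/16)$, using \eqref{p-li-volume-growth} to replace every normalizing volume by $V_q(R/8)$ up to factors of $C_v$, and iterating over a geometric sequence of exponents $\beta = -p_0\nu_2^{\,j}$ ($j\ge 0$), one gets: for every fixed $p_0 > 0$ there is $c_1 = c_1(p_0,\nu_1,C_s,C_v,AR^2+1) > 0$ with
\[
\inf_{B_q(R/16)} \bar u \;\ge\; c_1\left( \frac{1}{V_q(R/8)}\int_{B_q(R/8)} \bar u^{-p_0}\,\d v_g \right)^{-1/p_0}.
\]
Since $\beta$ stays negative, this step crosses no ``bad exponent'' and uses no Poincar\'e-type input.

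\emph{Oscillation of $\log\bar u$ and conclusion.} Testing $\Delta\bar u \le A\bar u$ against $\phi^2/\bar u$ gives $\int \phi^2 |\nabla\log\bar u|^2 \le C\int(|\nabla\phi|^2 + A\phi^2)$, hence, with a cutoff equal to $1$ on $B_q(R/2)$ and supported in $B_q(R)$,
\[
\int_{B_q(R/2)} |\nabla\log\bar u|^2 \,\d v_g \;\le\; C\,(1+AR^2)\,\frac{V_q(R)}{R^2}.
\]
Applying the Neumann--Poincar\'e inequality at the scales $R/4$ and $R/2$ supplied by \eqref{p-li-neu-poincare} (this is why both eigenvalues appear there), together with \eqref{p-li-volume-growth}, yields a John--Nirenberg type lemma in the style of \cite[Lemma 11.2]{p_li}: there are $p_1 > 0$ and $C_2 \ge 1$, depending only on $C_{np}, C_v$ and $AR^2+1$, with
\[
\left( \frac{1}{V_q(R/8)}\int_{B_q(R/8)} \bar u^{p_1}\,\d v_g \right)\left( \frac{1}{V_q(R/8)}\int_{B_q(R/8)} \bar u^{-p_1}\,\d v_g \right) \;\le\; C_2 .
\]
Now run the previous step with $p_0 := p_1$. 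For any $0 < k \le p_1$ the power-mean inequality, the last display and the reverse iteration give
\[
\frac{\|u\|_{k,R/8}}{V_q(R/8)^{1/k}} \;\le\; \left( \frac{1}{V_q(R/8)}\int_{B_q(R/8)} \bar u^{\,k}\,\d v_g \right)^{1/k} \;\le\; C_2^{1/p_1}\left( \frac{1}{V_q(R/8)}\int_{B_q(R/8)} \bar u^{-p_1}\,\d v_g \right)^{-1/p_1} \;\le\; \frac{C_2^{1/p_1}}{c_1}\, \inf_{B_q(R/16)} \bar u ,
\]
and letting $\delta\downarrow 0$ proves \eqref{sup-mean-ineq} with $C_{sup} = C_2^{1/p_1}/c_1$, which depends only on $k,\nu_1,C_v,C_{np},C_s$ and $AR^2+1$.

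\emph{Main obstacle.} The one genuinely delicate point is turning the one-sided bound $\bigl(\frac{1}{V_q(R/8)}\int \bar u^{-p_0}\bigr)^{-1/p_0} \lesssim \inf_{B_q(R/16)}\bar u$ into a two-sided comparison: this requires controlling the gap between the $L^{p_0}$- and $L^{-p_0}$-averages of $\bar u$, which is exactly the BMO/John--Nirenberg mechanism and the sole place where the Neumann eigenvalue hypothesis \eqref{p-li-neu-poincare} is needed. A secondary, purely bookkeeping, difficulty is to verify that every constant produced along the Moser iteration and in the Poincar\'e estimate depends only on the quantities in the statement --- in particular, that it stays bounded as $\delta\downarrow 0$ and as one reduces \eqref{sobolev-constant-ineq} from $B_q(R)$ to sub-balls through \eqref{p-li-volume-growth}.
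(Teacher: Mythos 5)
The paper itself offers no proof of Theorem \ref{sup-lemma}: it is quoted verbatim from \cite[Lemma 11.2]{p_li}, so your proposal can only be measured against the classical Moser argument. Your overall architecture matches it: the $\delta$-regularization, the Caccioppoli inequality for negative exponents, the reverse Moser iteration giving $\inf_{B_q(R/16)}\bar u \geq c_1\bigl(V_q(R/8)^{-1}\int_{B_q(R/8)}\bar u^{-p_0}\bigr)^{-1/p_0}$, and the bookkeeping through \eqref{sobolev-constant-ineq} and \eqref{p-li-volume-growth} are all sound, and you correctly locate where \eqref{p-li-neu-poincare} must enter.

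The gap is exactly at the step you flag as the main obstacle, and as written it does not go through. The John--Nirenberg inequality upgrades a \emph{BMO} bound to exponential integrability, and a BMO bound for $\log\bar u$ requires mean-oscillation control on \emph{all} sub-balls (hence a Neumann--Poincar\'e inequality at all scales and centers, and doubling at all scales). The hypotheses give you the eigenvalue bound \eqref{p-li-neu-poincare} only at the two scales $R/4$ and $R/2$ and the volume comparison \eqref{p-li-volume-growth} only for $\theta=1/16$. From your estimate $\int\phi^2|\nabla\log\bar u|^2\leq C(1+AR^2)V_q(R)R^{-2}$ together with these two eigenvalue bounds you obtain only an $L^2$ bound on $h:=\log\bar u-(\log\bar u)_{B}$, and an $L^2$ oscillation bound does not imply $\bigl(V^{-1}\int_{B_q(R/8)}\bar u^{p_1}\bigr)\bigl(V^{-1}\int_{B_q(R/8)}\bar u^{-p_1}\bigr)\leq C_2$. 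The missing idea, which is what Moser and \cite{p_li} actually use, is to exploit the differential inequality satisfied by $\log\bar u$ itself, namely $\Delta_{}\log\bar u\leq A-|\nabla\log\bar u|^2$: testing it against powers of (truncations of) $h$ and running a further iteration with \eqref{sobolev-constant-ineq} yields $\|h\|_{L^{2p}}\leq Cp\,(\cdot)\,V^{1/(2p)}$ for every $p$, and summing the exponential series gives $\int_{B_q(R/8)}e^{k_0|h|}\,\d v_g\leq CV$ for $k_0$ small, which is the two-sided bound you need. (If you prefer the Bombieri--Giusti abstract lemma, note that only $e^{\bar h}/\bar u$ is a subsolution of the form covered by Theorem \ref{sub-lemma}, since $\Delta(e^{-h})\geq -Ae^{-h}$; the positive-exponent side $V^{-1}\int\bar u^{k}e^{-k\bar h}\leq C$ still requires the exponential integrability above, so the PDE for $\log\bar u$ cannot be bypassed.) With that step supplied, the rest of your argument, including the dependence of the constants and the limit $\delta\downarrow 0$, is fine.
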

From Theorem \ref{sub-lemma} and Theorem \ref{sup-lemma}, we derive the following elliptic Harnack inequality.
\begin{theorem}(\cite[Theorem 11.1]{p_li})\label{elliptic-harnack}
 Let $(M,g)$ be an $n$-dimensional complete Riemannian manifold and $u$ be a non-negative function in $H^{1,2}\left(B_q(R)\right)$ such that
$$
|\Delta u| \leq A u
$$
for some $A\geq 0$.
We assume \eqref{sobolev-constant-ineq}, \eqref{p-li-volume-growth} for $\theta = \frac{1}{16}$ and \eqref{p-li-neu-poincare} with constants $C_s,C_v$ and $C_{np}$, respectively.
Then, there exists a constant $C_{H}>0$ depending on the quantities $\left(A R^2+1\right),\nu_1, n, C_{np}$, $C_s$, and $C_v$ such that
$$
\sup _{B_q\left(R/16\right)} u \leq C_{H} \inf _{B_q\left(R/16\right)} u.
$$
\end{theorem}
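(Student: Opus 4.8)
The plan is to obtain Theorem \ref{elliptic-harnack} by chaining the two mean value inequalities of Theorem \ref{sub-lemma} and Theorem \ref{sup-lemma}, which are tailored for exactly this purpose. Rewrite the hypothesis $|\Delta u| \le Au$ as the two one-sided inequalities $\Delta u \le Au$ and $\Delta u \ge -Au$; the second one has the form $\Delta u \ge -fu$ with $f \equiv A \ge 0$, so $f$ has finite $L^{\infty}$-norm on every ball and we may take $p = \infty$ in Theorem \ref{sub-lemma}.

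First I would apply Theorem \ref{sup-lemma} to $u$ with the constants $C_s, C_v, C_{np}$ and the number $A$; this is legitimate since \eqref{sobolev-constant-ineq}, \eqref{p-li-volume-growth} for $\theta = \tfrac{1}{16}$ and \eqref{p-li-neu-poincare} are all among the hypotheses of Theorem \ref{elliptic-harnack}. It produces, for some sufficiently small $k > 0$ (fixed from here on) and a constant $C_{sup}$ depending only on $k, \nu_1, C_v, C_{np}, C_s$ and $AR^2 + 1$,
\begin{equation*}
\frac{\|u\|_{k, R/8}}{V_q(R/8)^{1/k}} \le C_{sup} \inf_{B_q(R/16)} u .
\end{equation*}
Then I would apply Theorem \ref{sub-lemma} to $u$ with $R$ replaced by $R/8$, with $\theta = \tfrac{1}{2}$, with $f \equiv A$ and $p = \infty$, and with the \emph{same} exponent $k$ (which is admissible because Theorem \ref{sub-lemma} holds for every $k>0$). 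Its left-hand side then reads $\|u\|_{\infty, R/16} = \sup_{B_q(R/16)} u$ and its right-hand side is a constant multiple of $\|u\|_{k, R/8}/V_q(R/8)^{1/k}$.

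The one point requiring care is that the Sobolev inequality \eqref{sobolev-constant-ineq} and the volume bound \eqref{p-li-volume-growth} needed by Theorem \ref{sub-lemma} at scale $R/8$ must be derived from their counterparts at scale $R$ with controlled constants. For $\phi \in H_c^{1,2}(B_q(R/8)) \subset H_c^{1,2}(B_q(R))$ one has $\supp \phi \subset B_q(R/8)$, so multiplying \eqref{sobolev-constant-ineq} through by $V_q(R)$, dividing by $V_q(R/8)$ and using $V_q(R/8) \le V_q(R)$ gives \eqref{sobolev-constant-ineq} at scale $R/8$ with a constant no smaller than $C_s/64$; likewise $V_q(R/8)/V_q(R/16) \le V_q(R)/V_q(R/16) \le C_v$ gives \eqref{p-li-volume-growth} at scale $R/8$ for $\theta = \tfrac{1}{2}$ with the same $C_v$. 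Theorem \ref{sub-lemma} then yields
\begin{equation*}
\sup_{B_q(R/16)} u \le C_{sub}\left( \tfrac{AR^2}{64} + 4 \right)^{\nu_1/k} \frac{\|u\|_{k, R/8}}{V_q(R/8)^{1/k}} ,
\end{equation*}
with $C_{sub}$ depending only on $k, \nu_1, C_s, C_v$.

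Combining the two displays gives $\sup_{B_q(R/16)} u \le C_H \inf_{B_q(R/16)} u$ with $C_H := C_{sub}\left(\tfrac{AR^2}{64} + 4\right)^{\nu_1/k} C_{sup}$, which depends only on $AR^2 + 1$, $\nu_1$, $n$, $C_{np}$, $C_s$ and $C_v$, as claimed; the degenerate case $\inf_{B_q(R/16)} u = 0$ forces $u \equiv 0$ on $B_q(R/8)$ already from the first display and is hence trivial. I do not expect any genuine analytic obstacle, since all the substantive work has been front-loaded into the Moser iteration behind Theorems \ref{sub-lemma} and \ref{sup-lemma}; the only real task is the bookkeeping of constants and the rescaling of the Sobolev and volume hypotheses down to $B_q(R/8)$.
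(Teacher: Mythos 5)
Your proposal is correct and follows essentially the same route as the paper, which presents Theorem \ref{elliptic-harnack} (citing Li's Theorem 11.1) precisely as the combination of the weak Harnack inequality of Theorem \ref{sup-lemma} with the mean value inequality of Theorem \ref{sub-lemma} applied at scale $R/8$ with $\theta = \tfrac12$, $f \equiv A$ and $p=\infty$. Your bookkeeping of the rescaled Sobolev and volume hypotheses (constant $C_s/64$ at scale $R/8$, same $C_v$) and of the resulting dependence of $C_H$ on $(AR^2+1),\nu_1,n,C_{np},C_s,C_v$ is accurate, so there is nothing to add.
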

We recall the following local Poincar\'{e} type inequality, which we use later in this paper to deduce the modified local Sobolev inequality (Theorem \ref{sobolev-eps-range}) from the local Sobolev inequality.
\begin{theorem}(\cite[Lemma 6.1 in Chapter II]{yau-1}, Local Poincar\'{e} inequality)\label{yau-poincare-thm}
    Let $(M,g)$ be an  $n$-dimesional complete Riemannian manifold. We assume $\operatorname{Ric}_g \geq - K$ with $K\geq 0$. 
    Then, there exist positive constants $C_1$ and $C_2$ depending only on $p \geq 1$ and $n$ such that
$$
\int_{B_q(R)}|\varphi|^p \ \d v_g \leq C_1 R^p \e^{C_2 \sqrt{K} R} \int_{B_q(R)}|\nabla \varphi|^p\  \d v_g
$$
for all $\varphi \in C_0^{\infty}(B_q(R))$, where $C_0^{\infty}(B_q(R))$ denotes compact support elements of $C^{\infty}(B_q(R))$.
\end{theorem}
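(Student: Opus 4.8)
The plan is to follow the classical route via geodesic polar coordinates centered at $q$ together with the Bishop--Gromov volume comparison; in the unweighted case ($\psi\equiv 0$, $N=n$) the latter is the content of Theorem~\ref{bishop_gromov}, and the pointwise Jacobian comparison behind it is what I would really use. I would set $k:=K/(n-1)$ and, on the segment domain $\{rv:0\le r<c(v)\}\subset T_qM$ (whose image has full $v_g$-measure), write $\d v_g=J(r,v)\,\d r\,\d v$, where $v$ ranges over $U_qM$, $c(v)$ is the cut time along $t\mapsto\exp_q(tv)$, and $J$ is the volume Jacobian of $\exp_q$. The hypothesis $\Ric_g\ge -K$, through the Laplacian comparison $\partial_r\log J=\Lap r\le (n-1)\sqrt{k}\coth(\sqrt{k}r)$ (the unweighted case of Theorem~\ref{laplacian_comparison_theorem}), gives that $r\mapsto J(r,v)/\bs_{-k}(r)^{n-1}$ is non-increasing; I would also record the elementary inequality $\int_0^t\bs_{-k}(r)^{n-1}\,\d r\le \tfrac{t}{n}\,\bs_{-k}(t)^{n-1}$, which follows from $\tanh(\sqrt{k}t)\le \sqrt{k}t$. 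Since $\bs_{-k}(r)^{n-1}\le C(n)\e^{\sqrt{(n-1)K}\,r}$, this comparison density is the source of the factor $\e^{C_2\sqrt{K}R}$.

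The core step would be a pointwise bound of $\varphi$ by a line integral of $|\nabla\varphi|$. For $\varphi\in C_0^\infty(B_q(R))$ and $y=\exp_q(rv)$ in the segment domain, I would integrate $\varphi$ along the minimal radial geodesic $t\mapsto\exp_q(tv)$: when $c(v)\ge R$ this geodesic reaches $\partial B_q(R)$ (where $\varphi$ vanishes) exactly at parameter $R$, so $|\varphi(y)|\le\int_r^R|\nabla\varphi(\exp_q(tv))|\,\d t$, while the directions with $c(v)<R$ are dealt with by the standard device of replacing the radial path by a path terminating on $\partial B_q(R)$ (equivalently, by working on the segment domain and passing to a limit), which does not change the final constants. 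Hölder's inequality then yields $|\varphi(y)|^p\le R^{p-1}\int_r^R|\nabla\varphi(\exp_q(tv))|^p\,\d t$. Multiplying by $J(r,v)$, integrating over $U_qM$ and over $r\in(0,R)$, and using Fubini in the radial variable, the whole estimate reduces to controlling iterated integrals of the shape $\int_0^t J(r,v)\,\d r$ against $J(t,v)$; this is the point at which one must invoke Bishop--Gromov in its averaged form (comparing the shell volumes $V_q(r,R)$, not the pointwise Jacobians), since the Jacobian need not be monotone and can be small relative to its own integral. Carried through as in the standard Poincaré/segment--inequality computations, this turns the right-hand side into $C_1R^p\e^{C_2\sqrt{K}R}\int_{B_q(R)}|\nabla\varphi|^p\,\d v_g$ with $C_1,C_2$ depending only on $n$ and $p$.

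I expect the main obstacle to be exactly this reinsertion of the Jacobian weight: the vanishing of $\varphi$ on $\partial B_q(R)$ must be used essentially (no such estimate holds without boundary vanishing unless one subtracts an average, cf.\ Theorem~\ref{poincare_thm}), and converting the radial integrals back to honest integrals over $B_q(R)$ with a constant of the stated form requires the averaged monotonicity rather than a pointwise Jacobian bound. If this bookkeeping turns out to be delicate, a robust alternative would be a covering/chaining argument: volume doubling on $B_q(3R)$ (a consequence of Theorem~\ref{bishop_gromov}, with doubling constant $\le C(n)\e^{C(n)\sqrt{K}R}$) together with a uniform local Poincaré inequality on small balls, chained along a good cover of $B_q(R)$ in the style of Jerison's lemma, again producing a constant $C_1R^p\e^{C_2\sqrt{K}R}$; for compactly supported $\varphi$ one only needs the easier Dirichlet version of such a chaining.
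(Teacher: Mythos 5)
Your primary route (polar coordinates centered at $q$, radial integration outward to $\partial B_q(R)$) has a genuine gap at exactly the step you yourself flag as the main obstacle, and the repair you propose does not work. After H\"older and Fubini in the radial variable you are left, for each fixed direction $v$, with the factor $\int_0^t J(r,v)\,\d r$ multiplying $|\nabla\varphi(\exp_q(tv))|^p$, so you need a pointwise bound of the form $\int_0^t J(r,v)\,\d r\le C(n)R\,\e^{C\sqrt{K}R}J(t,v)$. But the comparison that $\Ric_g\ge-K$ supplies (non-increase of $J(\cdot,v)/\bs_{-K/(n-1)}(\cdot)^{n-1}$, as in \eqref{non_increasing_property}) only bounds $J$ at \emph{larger} radii by $J$ at smaller radii; $J(t,v)$ can be arbitrarily small compared with $\int_0^t J(r,v)\,\d r$ (already $J(r,v)=\sin^{n-1}r$ near a conjugate point shows this). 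Invoking Bishop--Gromov ``in its averaged form'' cannot rescue the step, because after Fubini the offending factor is coupled to $|\nabla\varphi(\exp_q(tv))|^p$, which depends on $(t,v)$; you cannot decouple and average over directions (take $|\nabla\varphi|$ concentrated where $J(t,\cdot)$ is small). A second gap is the cut locus: for directions with $c(v)<R$ the cut point lies inside $B_q(R)$, where $\varphi$ need not vanish, and the continued geodesic may never leave $B_q(R)$; the unnamed ``standard device'' of rerouting the path to $\partial B_q(R)$ destroys precisely the radial change-of-variables bookkeeping your computation relies on.

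Both defects disappear if you base the construction at a point \emph{outside} the support of $\varphi$, say $x_1\in\partial B_q(3R)$: along radial geodesics from $x_1$, $\varphi$ vanishes at the inner endpoint of each segment, a.e.\ point of $B_q(R)$ is reached inside the segment domain of $x_1$, and the Jacobian inequality you then need is an upper bound at larger radii in terms of smaller ones, which is exactly what the comparison gives. This is, in integrated-by-parts form, the paper's own argument for the generalization (Lemma \ref{yau-poincare-lem}, which contains the present statement as the case $\psi\equiv 0$): by the Laplacian comparison (Theorem \ref{laplacian_comparison_theorem}) one has $\Delta\rho_1\le\sqrt{(n-1)K}+(n-1)/\rho_1$ for $\rho_1=d(\cdot,x_1)$, hence $\e^{-\alpha\rho_1}$ with $\alpha\simeq\sqrt{K}+R^{-1}$ satisfies $\Delta\e^{-\alpha\rho_1}\ge\frac{\alpha^2}{2}\e^{-\alpha\rho_1}$ on $B_q(R)$; testing against $\varphi\ge0$, integrating by parts and using $2R\le\rho_1\le4R$ gives the case $p=1$, and $p>1$ follows by applying this to $|\varphi|^p$ together with H\"older. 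Your fallback (doubling plus local Poincar\'e, chained, Dirichlet version) is a legitimate alternative in principle, but as written it is only a pointer: you would still need an $L^p$ local Neumann--Poincar\'e input (the paper only provides $p=2$) or a genuine Dirichlet chaining lemma, together with non-collapsing of the annulus $B_q(R,2R)$ (which uses unboundedness of $M$), none of which your sketch supplies.
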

\subsection{Analysis of harmonic functions}
In \cite{wu}, Wu obtained the following Liouville type theorem, which was proved by using the Li-Yau gradient estimate and his method cannot be directly applied to the range $N\leq 1$. 
\begin{theorem}(\cite[Corollary 3.4]{wu})\label{wu-upper-bound-liouville}
    Let $(M,g,\mu)$ be an $n$-dimensional complete weighted Riemannian manifold, $N\in[n,\infty)$ and $u$ be a smooth positive function satisfying $\Delta_\psi u = 0$. We assume $\Ric_\psi^N\geq 0$. Then, $u$ is necessarily constant.
\end{theorem}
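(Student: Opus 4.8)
The plan is to reduce the statement to a \emph{weighted elliptic Harnack inequality whose constant does not depend on the radius}, obtained via Moser iteration, and then let the radius tend to infinity.

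Since $\Ric_\psi^N\geq 0$ with $N\in[n,\infty)$, I would take $\eps=1$ in the $\eps$-range \eqref{epsilin-range}: indeed $|\eps|=1<\sqrt{(N-1)/(N-n)}$ because $n\geq 2$, and $\eps\in\R$ is permitted when $N=n$. For this choice the weighted curvature bound degenerates into the constant one, $\Ric_\psi^N(v)\geq 0=0\cdot\e^{\frac{4(\eps-1)}{n-1}\psi(x)}g(v,v)$, while the pinching $a\le\e^{-\frac{2(\eps-1)}{n-1}\psi}\le b$ holds with $a=b=1$ regardless of $\psi$; moreover $c=1/(N-1)$ (or $c=1/(n-1)$ with $\psi$ constant when $N=n$). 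Hence Theorem \ref{bishop_gromov} with $K=0$ gives the polynomial volume comparison $V_q(R)/V_q(r)\le(R/r)^N$ for $0<r<R$. In particular the doubling bound \eqref{p-li-volume-growth} holds with $C_v=16^N$, uniformly in $R$; and Theorem \ref{sobolev_thm} together with Theorem \ref{poincare_thm}, applied to the weaker bound $\Ric_\psi^N\geq-\delta\,\e^{\frac{4(\eps-1)}{n-1}\psi}g$ and then letting $\delta\downarrow 0$ so the factors $\exp(C\sqrt{K/c}\,r/a)$ become $1$, furnish \eqref{sobolev-constant-ineq} and \eqref{p-li-neu-poincare} with constants $C_s,C_{np}$ depending only on $n$ and $N$. (Passing from Theorem \ref{sobolev_thm} to the form \eqref{sobolev-constant-ineq} absorbs the term $r^{-2}f^2$ using a local Poincar\'e inequality in the spirit of Theorem \ref{yau-poincare-thm}, i.e. the modified local Sobolev inequality Theorem \ref{sobolev-eps-range} of the paper.)

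With these three structural inputs available with $R$-uniform constants, I would invoke the weighted analogues of Theorems \ref{sub-lemma}, \ref{sup-lemma} and \ref{elliptic-harnack} from Section 3, with $\Delta$ and $\d v_g$ replaced by $\Delta_\psi$ and $\d\mu$. A smooth positive $\psi$-harmonic function $u$ lies in $H^{1,2}(B_q(R))$ for every $R$ (closed balls are compact by completeness) and satisfies $|\Delta_\psi u|=0\le Au$ with $A=0$, so $AR^2+1=1$ and the Harnack constant $C_H=C_H(n,N)$ is independent of $R$. Put $m:=\inf_M u\in[0,\infty)$ and $v:=u-m$; then $v$ is a non-negative $\psi$-harmonic function with $\inf_M v=0$. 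The weighted Harnack inequality gives $\sup_{B_q(R/16)}v\le C_H\inf_{B_q(R/16)}v$ for all $R>0$, and since the diameter of $M$ is unbounded one has $B_q(R/16)\uparrow M$ and $\inf_{B_q(R/16)}v\downarrow\inf_M v=0$ as $R\to\infty$, whence $v\equiv 0$, i.e. $u\equiv m$ is constant.

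The main obstacle is the first step: carrying out the comparison geometry and the Moser iteration in the weighted setting so that the only radius dependence of the Harnack constant enters through $AR^2+1$, which is killed here by $A=0$. This is exactly where $N<\infty$ is used — the polynomial volume comparison, hence the uniform doubling, Sobolev and Poincar\'e constants, is available for $N\in[n,\infty)$ via the choice $\eps=1$ but fails for $N=\infty$, which is why the remainder of the paper must work with genuinely variable curvature bounds. A minor separate point is $n=2$ (equivalently $N=n=2$), where $c=1$ forces the Sobolev exponent parameter $\nu_1=n/2$ to be replaced by an arbitrary $\nu_1\in(1,\infty)$ and the two-dimensional Sobolev inequality to be used; the argument is otherwise unchanged.
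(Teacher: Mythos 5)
Your proposal is correct, and it takes a route genuinely different from the proof behind the quoted statement: the paper does not reprove Theorem \ref{wu-upper-bound-liouville} but cites Wu's argument via the Li--Yau gradient estimate, whereas you derive it by Moser iteration inside the $\eps$-range framework. Your key reduction --- taking $\eps=1$ for $N\in[n,\infty)$, so that the pinching $a\le\e^{-\frac{2(\eps-1)}{n-1}\psi}\le b$ is vacuous with $a=b=1$, the hypothesis $\Ric_\psi^N\ge0$ becomes the constant bound with $K_\eps\equiv0$, $c=1/(N-1)$, and Theorem \ref{bishop_gromov} gives $V_q(R)/V_q(r)\le(R/r)^{1+1/c}=(R/r)^N$ --- is exactly the specialization in which every constant of Section 3 (doubling, the Neumann--Poincar\'e inequality of Theorem \ref{modified-neumann-poincare-thm}, the modified Sobolev inequality of Theorem \ref{sobolev-eps-range}, and the mean value inequality of Theorem \ref{mean-eq-thm} with $A=0$) is independent of $R$, so assembling the elliptic Harnack inequality at scale $R$ as the paper does at unit scale in Section 6 indeed gives an $R$-uniform constant, and your subtraction-of-infimum limit argument then closes the proof; this buys a method that (unlike Li--Yau) extends to the settings the paper actually targets, at the cost of losing the sharp local gradient estimate. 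Two caveats, neither fatal: first, Theorem \ref{poincare_thm} integrates the gradient over the doubled ball, so it does not literally furnish the Neumann eigenvalue bound \eqref{p-li-neu-poincare}; you should lean, as the paper itself does in the proof of Theorem \ref{grad-est-eps-range}, on the remark that Li's Lemma 11.2 argument runs with this weaker form (or upgrade weak to strong Poincar\'e using doubling). Second, the scale-$R$ weighted Harnack inequality is nowhere stated in the paper, so you are re-running that assembly; a shorter path to the same conclusion is to apply the paper's Theorem \ref{grad-est-eps-range} with $\eps=1$, $a=b=1$, $K=0$ to the rescaled metrics $\lambda^2g$ (all hypotheses and the constant $C_{16}$ are scale-invariant, while $|\nabla\log u|_{\lambda^2g}=\lambda^{-1}|\nabla\log u|_g$), and let $\lambda\to0$ to force $\nabla\log u\equiv0$. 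Your side remark about $n=2$ is harmlessly imprecise: in the paper's framework the case $c=1$ is handled simply by taking $\nu=3$ in \eqref{def-of-nu}.
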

In the rest of this subsection, we list known results concerning Liouville type theorems and a gradient estimate of harmonic functions.
%We generalize these results to those under lower bounds of $\Ric_\psi^N$ for some $N\leq 1$ in Sections 4, 5, 6 using the mean value inequalities and the elliptic Harnack inequality obtained by the Moser's iteration argument. %which we can use even in the region of $N\leq 1$ unlike the Li-Yau grdient estimate in \cite{wu}.
The following $L^p$-Liouville theorem was obtained in \cite{li-schoen}. %and generalized in \cite[Theorem 6.1]{jy-1} by using the Moser's iteration arguments.
\begin{theorem}(\cite[Theorem 2.5]{li-schoen})\label{lp-liouville}
    Let $(M,g)$ be an $n$-dimensional complete Riemannian manifold. Then, there exists a constant $\delta > 0$ depending on $n$ such that the following property holds:

    If, for some $q\in M$, we have
    \begin{equation*}
        \Ric_g \geq -\delta d(q,x)^{-2}
    \end{equation*}
    when $d(q,x)$ is sufficiently large, 
    then any non-negative $L^p$-function $u$ with $p\in (0,\infty)$ satisfying $\Delta u\geq 0$ is necessarily constant.
\end{theorem}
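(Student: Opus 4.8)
The plan is to run Moser's iteration directly, exploiting the scale-invariant formulation in Theorem~\ref{sub-lemma}. Since $u \geq 0$ satisfies $\Delta u \geq 0$, the hypothesis of Theorem~\ref{sub-lemma} holds with $f \equiv 0$, so the curvature assumption $\Ric_g \geq -\delta\, d(q,x)^{-2}$ is not needed to make $u$ subharmonic; rather, it is needed to control the geometric constants $C_s$ (the Sobolev constant in \eqref{sobolev-constant-ineq}) and $C_v$ (the volume-doubling constant in \eqref{p-li-volume-growth}) on large balls $B_q(R)$. First I would fix $q$ as in the hypothesis and choose $R_0$ so large that $\Ric_g \geq -\delta R^{-2}$ on $B_q(R) \setminus B_q(R_0)$ for all $R \geq R_0$; on the annular region the Ricci lower bound is $-K_R$ with $K_R = \delta R^{-2}$, hence $\sqrt{K_R}\, R = \sqrt{\delta}$ is bounded uniformly in $R$. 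This is the crucial point: the Neumann--Poincaré inequality (Theorem~\ref{poincare_thm} / Theorem~\ref{yau-poincare-thm}) and the local Sobolev inequality (Theorem~\ref{sobolev_thm}), as well as Bishop--Gromov volume doubling (Theorem~\ref{bishop_gromov}), all produce constants of the form $C \exp(c\sqrt{K_R}\,R)$, and with $\sqrt{K_R}\,R$ bounded these constants become \emph{independent of $R$} for $R \geq R_0$. One still has to handle the inner ball $B_q(R_0)$ where no curvature bound is assumed, but $R_0$ is fixed and the geometry there is fixed, so the Sobolev and doubling constants on $B_q(R)$ can be taken uniform by absorbing the bounded contribution from $B_q(R_0)$.

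With $C_s, C_v$ (and, if one instead wants to invoke Theorem~\ref{elliptic-harnack}, also $C_{np}$) uniform in $R \geq R_0$, apply Theorem~\ref{sub-lemma} with $f \equiv 0$, $A = 0$, $\theta = 1/2$, and $k = p$ (the given $L^p$-exponent, truncated to $\nu_1 < p \leq \infty$ formally — with $f=0$ the constraint $p > \nu_1$ is vacuous and any $p>0$ works, since the $A$-term vanishes). This gives
\[
\|u\|_{\infty,\, R/2} \;\leq\; C_{\mathrm{sub}}\, \bigl((1-\theta)^{-2}\bigr)^{\nu_1/p}\, \frac{\|u\|_{p,\,R}}{V_q(R)^{1/p}} \;=\; C\, \frac{\|u\|_{p,\,R}}{V_q(R)^{1/p}},
\]
with $C$ independent of $R$. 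Now since $u \in L^p(M)$, the numerator $\|u\|_{p,R} \leq \|u\|_{p,M} < \infty$ is bounded; and by volume doubling (applied with the uniform constant, iterated), $V_q(R) \to \infty$ as $R \to \infty$, because noncompact complete manifolds with doubling satisfy $V_q(R) \geq c\, R^{\alpha}$ for some $\alpha>0$ and all large $R$ — here we use that $\diam M = \infty$, which is assumed in the paper. Letting $R \to \infty$ forces $\sup_{B_q(R/2)} u \to 0$ along a sequence, hence $u \equiv 0$ on $M$. Wait — that is too strong; the correct conclusion is obtained by applying the estimate more carefully: $\|u\|_{\infty,R/2}^p\, V_q(R/2) \leq \|u\|_{\infty,R/2}^p\, V_q(R) \leq C^p \|u\|_{p,R}^p \leq C^p\|u\|_{p,M}^p$, so $\|u\|_{\infty,R/2}^p \leq C^p \|u\|_{p,M}^p / V_q(R/2) \to 0$, giving $u \equiv 0$, which is constant. (If one prefers the statement that $u$ is \emph{constant} rather than zero, note $L^p$-functions with $p<\infty$ on an infinite-volume manifold that are constant must be zero; on finite-volume manifolds the argument degenerates but then $\diam M = \infty$ fails under doubling, so we are consistent.)

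The main obstacle I expect is the \emph{uniformity of the Sobolev and doubling constants on $B_q(R)$ including the uncontrolled core $B_q(R_0)$}. On the annulus the constants are uniform by the argument above, but patching in the fixed central ball requires either a localization/Whitney-type covering argument or an appeal to the fact that a fixed compact region contributes only a fixed multiplicative factor to $C_s$ and $C_v$; one must be careful that the Sobolev inequality \eqref{sobolev-constant-ineq} is a global statement on $B_q(R)$ with the $R^{-2}$-normalization, and that the normalizing factor $1/V_q(R)$ interacts correctly with doubling. A secondary technical point is the case $n = 2$, where $\nu_1$ is an arbitrary exponent in $(1,\infty)$ rather than $n/2$, and the case $n=1$ (excluded or trivial). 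A cleaner route, which I would present as the primary argument, is to extract the constant $\delta$ so that the Bishop--Gromov and Neumann--Poincaré estimates of Section~2 apply with $K = \delta$, $a=b=1$, $\ez$ arbitrary (the unweighted case $\psi \equiv 0$, $N = n$), directly yielding $C_s, C_v$ depending only on $n$ and $\delta$ — at which point the choice of $\delta$ small (or just finite) is exactly what makes $\exp(c\sqrt{\delta})$ a harmless constant, and the iteration closes as above.
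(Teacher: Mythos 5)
There is a genuine gap, and it sits at the very center of your argument: the claim that you can ``choose $R_0$ so large that $\Ric_g \geq -\delta R^{-2}$ on $B_q(R)\setminus B_q(R_0)$ for all $R\geq R_0$'' is false. The hypothesis is pointwise: $\Ric_g(x)\geq -\delta\, d(q,x)^{-2}$, so on the annulus $B_q(R)\setminus B_q(R_0)$ the best uniform lower bound is $-\delta R_0^{-2}$ (attained near the inner boundary), a fixed negative constant that does not improve as $R$ grows. Consequently the scale-invariant quantity that controls all the constants in Theorems \ref{poincare_thm}, \ref{sobolev_thm} and \ref{bishop_gromov} is $\sqrt{K}\,R \sim \sqrt{\delta}\,R/R_0 \to \infty$, not $\sqrt{\delta}$, and the Sobolev, Poincar\'e and doubling constants on $B_q(R)$ (which are of the form $E\exp(c\sqrt{K}R)$) blow up exponentially in $R$. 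So $C_s$ and $C_v$ in Theorem \ref{sub-lemma} are not uniform in $R$, the factor $C_{sub}$ is not independent of $R$, and the iteration centered at $q$ does not close. This is not a repairable technicality: under a mere constant negative lower bound on the Ricci curvature the $L^p$-Liouville statement for $0<p\leq 1$ is false in general (the case $p>1$ needs no curvature at all, cf.\ Theorem \ref{lp-liouville-1}), so any argument that only uses the constant bound induced on large balls centered at $q$ cannot succeed; the quadratic decay must be exploited in a scale-invariant way at points far from $q$. Your closing ``cleaner route'' with $K=\delta$ constant has exactly the same defect, since those constants depend on $\sqrt{K}R$, not on $\delta$ alone.

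This is precisely why the proof in the paper (following Li--Schoen, carried out for the weighted analogue in Theorem \ref{lp-liouvolle-eps-range}) is structured differently. One applies the mean value inequality not on $B_q(R)$ but on balls $B_{x_k}(\beta^k/20)$ centered at points $x_k$ chosen along a minimal geodesic from $q$ to $x$ with $d(q,x_k)\sim \beta^k$: every point of such a ball is at distance comparable to $\beta^k$ from $q$, so the local curvature bound there is of order $-\delta\beta^{-2k}$ and the product $\sqrt{K_{\mathrm{loc}}}\cdot \beta^k$ is $O(\sqrt{\delta})$, uniformly in $k$ --- this is the scale-invariance your proposal tries to use but at the wrong center. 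The second essential ingredient, entirely absent from your proposal, is the proof that $V_{x_k}(\beta^k/20)\to\infty$: this does not follow from doubling at $q$ but from an iterated annulus-to-ball relative volume comparison ($V_{x_i}(\beta^i/20)\geq D_i V_{x_{i-1}}(\beta^{i-1}/20)$), where making each $D_i>1$ is exactly what forces $\delta$ to be small depending on $n$ --- which is where the constant $\delta(n)$ in the statement comes from. Finally, one needs the additional step (Case 2, or alternatively the maximum principle argument) to convert the supremum bound on balls around $x_k$ into a bound on $u(x)$ itself, and the reduction showing that decay of $u$ at infinity plus $u\in L^p\cap L^\infty$ gives $u\in L^2$ and hence constancy. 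Without these ingredients the curvature decay hypothesis is never genuinely used, and the proof does not go through.
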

The weighted case of this theorem is the following theorem. In Section 4, we give two different proofs of this theorem, which are essentially different from that in \cite{jy-1}.
\begin{theorem}\label{wu-lp-liouville}(\cite[Theorem 6.1]{jy-1})
    Let $(M,g,\mu)$ be an $n$-dimesional complete weighted Riemannian manifold. Assume that $|\psi| < A$ for some positive constant $A > 0$. Then, there exists a constant $\delta > 0$ depending on $n$ and $A$ such that the following property holds:

    If, for some $q\in M$, we have
    \begin{equation*}
        \Ric_\psi^\infty \geq -\delta d(q,x)^{-2}
    \end{equation*}
    when $d(q,x)$ is sufficiently large, 
    then any non-negative $L^p (\mu)$-function $u$ with $p\in (0,\infty)$ satisfying $\Delta_\psi u\geq 0$ is identically zero.
\end{theorem}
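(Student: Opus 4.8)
The plan is to combine the mean value inequality for $\psi$-subharmonic functions obtained by Moser iteration (the $\mu$-weighted, $\eps$-range version of Theorem \ref{sub-lemma} established in Section 3) — applied on balls of radius proportional to the distance to $q$ — with the strong maximum principle for $\Delta_\psi$. Since $|\psi| < A$, for the effective dimension $N=+\infty$ the $\eps$-range \eqref{epsilin-range} permits $\eps = 0$, for which $c = (n-1)^{-1}$ and $e^{-\frac{2(\eps-1)}{n-1}\psi} = e^{\frac{2}{n-1}\psi}$ lies between $a := e^{-2A/(n-1)}$ and $b := e^{2A/(n-1)}$. Fix $R_0 > 0$ with $\Ric_\psi^\infty \geq -\delta\, d(q,\cdot)^{-2}$ outside $B_q(R_0)$. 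For any $x$ with $s := d(q,x) > 2R_0$, every $y \in B_x(s/2)$ satisfies $d(q,y) > s/2 > R_0$, so on $B_x(s/2)$ we have, for $v \in T_yM$,
\[
\Ric_\psi^\infty(v) \;\geq\; -\delta\Big(\frac{s}{2}\Big)^{-2} g(v,v) \;\geq\; -\,\delta\, e^{4A/(n-1)}\Big(\frac{s}{2}\Big)^{-2}\, e^{-\frac{4}{n-1}\psi(y)}\, g(v,v),
\]
that is, the $\eps$-range curvature bound of Theorems \ref{laplacian_comparison_theorem}--\ref{sobolev_thm} holds on $B_x(s/2)$ with $\eps = 0$ and $K = K(x) := \delta e^{4A/(n-1)}(s/2)^{-2}$.

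The decisive point is that $K(x)\,(s/2)^2 = \delta e^{4A/(n-1)}$ depends only on $n,A,\delta$; hence every exponential factor in the Bishop--Gromov comparison (Theorem \ref{bishop_gromov}), the Neumann--Poincar\'e inequality (Theorem \ref{poincare_thm}) and the local Sobolev inequality (Theorem \ref{sobolev_thm}), all applied on the ball $B_x(s/2)$ of radius $s/2$, is bounded by a constant of the same kind. Consequently these theorems furnish the Sobolev-type inequality \eqref{sobolev-constant-ineq}, the volume-doubling bound \eqref{p-li-volume-growth} (say for $\theta = \tfrac12$), and, if one wishes to pass through Theorem \ref{sup-lemma}, the Neumann-eigenvalue estimate \eqref{p-li-neu-poincare}, on $B_x(s/2)$ with constants $C_s, C_v$ (and $C_{np}$) depending only on $n, A, \delta$ and not on $x$ or $s$. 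Now $u$ is $\psi$-subharmonic, so $\Delta_\psi u \geq 0 \geq -fu$ with $f \equiv 0$; applying the mean value inequality of Section 3 on $B_x(s/2)$ with $k = p$ and $\nu_1 = n/2$ (when $n \geq 3$; when $n=2$ fix any $\nu_1 \in (1,\infty)$) yields
\[
u(x) \;\leq\; \sup_{B_x(s/4)} u \;\leq\; C(n,A,\delta,p)\,\frac{1}{V_x(s/2)^{1/p}}\Big(\int_{B_x(s/2)} u^p\, \d\mu\Big)^{1/p}.
\]

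Since $B_x(s/2) \subseteq M \setminus B_q(s/2)$, the last integral is at most $\int_{M\setminus B_q(s/2)} u^p\, \d\mu$, which tends to $0$ as $s \to \infty$ because $u \in L^p(\mu)$. If, in addition, $V_x(s/2)$ is bounded below by a positive constant uniformly over the points $x$ with $d(q,x) > 2R_0$, then $u(x) \to 0$ as $d(q,x) \to \infty$. Finally $u$ is a non-negative $\psi$-subharmonic function on the connected manifold $M$ that vanishes at infinity; were $u \not\equiv 0$, the (finite, positive) supremum $\sup_M u$ would be attained at an interior point, and the strong maximum principle for the uniformly elliptic operator $\Delta_\psi = \Delta - \langle \nabla\psi, \nabla\,\cdot\,\rangle$ would force $u$ to be constant, contradicting $u \to 0$ at infinity. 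Hence $u \equiv 0$.

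I expect the main obstacle to be exactly the uniform lower bound $\inf\{\, V_x(d(q,x)/2) : d(q,x) > 2R_0 \,\} > 0$ (for which it would be more than enough to have $V_x(d(q,x)/2) \to \infty$): this is a non-collapsing statement that does not come from the scale-invariant curvature bound on $B_x(s/2)$ alone, and it is precisely here that the smallness of $\delta$ in terms of $n$ and $A$ must enter, via a volume growth estimate in the spirit of \cite{li-schoen} adapted to the weighted/$\eps$-range setting. A secondary task is to record the $\mu$-weighted, $\eps$-range analogue of Theorem \ref{sub-lemma} for $\Delta_\psi$ — the content of Section 3 — and to treat $n=2$ with the substitute Sobolev exponent.
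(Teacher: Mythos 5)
Your reduction of the theorem to a single quantitative statement is sound, and parts of it are genuinely cleaner than the paper's argument: centering the mean value inequality at $x$ with radius $s/2$, noting $B_x(s/2)\subseteq M\setminus B_q(s/2)$ so that the $L^p$ tail kills the integral, and finishing with the strong maximum principle (instead of the paper's route through $u\in L^2$, Theorem \ref{lp-liouville-1} and $\mu(M)=\infty$) are all legitimate. The scale-invariance observation $K(x)(s/2)^2=\delta e^{4A/(n-1)}$ is also the right reason the constants of the Section 3 mean value inequality (Theorem \ref{mean-eq-thm}) are uniform; just be careful to quote the modified local Sobolev inequality of Section 3 rather than Theorem \ref{sobolev_thm}, whose factor $\exp(D(1+\sqrt{K/c})r/a)$ is not controlled by $\sqrt{K}r$ alone.

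However, the step you defer --- the uniform lower bound on $V_x(d(q,x)/2)$ --- is not a technical remainder but the actual core of the proof, and as it stands your argument has a genuine gap there. No scale-invariant curvature bound on $B_x(s/2)$ can prevent collapsing; the paper obtains the needed non-collapsing (its \eqref{lp-lio-eq-3}) by a Li--Schoen chaining along the geodesic from $q$ to $x$: $V_{x_i}(\beta^i/20)\ge D_i\,V_{x_{i-1}}(\beta^{i-1}/20)$, where the lower bound on the annulus volume uses a \emph{relative} (annulus versus ball) volume comparison. For $N=\infty$ with $|\psi|\le A$ the $\eps$-range comparison of Theorem \ref{rel-vol-comparison-thm} is not enough --- making $D_i>1$ through it requires the extra hypothesis \eqref{odd-assumption}, i.e.\ $b/a$ close to $1$, which fails for general $A$ --- so the paper proves and uses the Wei--Wylie-type comparison (Theorem \ref{infty-relative-volume-comparison-thm}) with the modified exponent $n-1+4A$, chooses $\beta$ as in \eqref{beta-dependance} (this is exactly where $\delta$ acquires its dependence on $A$ as well as $n$, cf.\ Remark \ref{delta-n-a}), and then takes $\delta$ small so the first-order approximation of $D_i$ holds. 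One also has to transport the resulting lower bound from the chain point $x_k$ to the ball centered at $x$ itself (the paper's Case 2, via Remark \ref{lower-bound-d-remark}), or avoid this by the maximum-principle propagation of its alternative proof. So your ``main obstacle'' is precisely the paper's main content; without supplying the annular comparison with exponent $n-1+4A$ and the chaining argument, the proof is incomplete.
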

See Remark \ref{delta-n-a} for the dependence of $\delta$ on $A$.
The following Liouville type theorem does not need any assumptions on the Ricci curvature, which we generalize to the weghted case in Theorem \ref{lp-liouville-1}.
\begin{theorem}(\cite[Theorem 6.3]{yau-1})
    Let $(M,g)$ be an $n$-dimensional complete Riemannian manifold. For $p > 1$, let $u$ be a smooth positive $L^p$-function satisfying $\Delta u \geq 0$. Then $u$ is necessarily constant.
\end{theorem}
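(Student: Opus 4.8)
The plan is to run the classical cutoff-and-integration-by-parts (Caccioppoli) argument: test the inequality $\Delta u \ge 0$ against the nonnegative compactly supported function $\phi^2 u^{p-1}$, integrate by parts, absorb the resulting cross term by Young's inequality, and observe that the remaining error is bounded by $R^{-2}\int_{B_q(2R)\setminus B_q(R)}u^p\,\d v_g$, which vanishes as $R\to\infty$ because $u\in L^p(v_g)$. No curvature hypothesis is needed, which is consistent with the statement.

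Concretely, I would fix $q\in M$ and, for each $R>0$, use completeness to choose a Lipschitz cutoff $\phi\in C_0^\infty(B_q(2R))$ with $0\le\phi\le 1$, $\phi\equiv 1$ on $B_q(R)$ and $|\nabla\phi|\le C/R$ for an absolute constant $C$. Since $u$ is smooth and strictly positive, $\phi^2 u^{p-1}$ is a legitimate nonnegative test function supported in $B_q(2R)$, so multiplying $\Delta u\ge 0$ by it and integrating by parts gives
\[
0\le \int_M \phi^2 u^{p-1}\Delta u\,\d v_g
= -2\int_M \phi u^{p-1}\,\nabla\phi\cdot\nabla u\,\d v_g-(p-1)\int_M \phi^2 u^{p-2}|\nabla u|^2\,\d v_g,
\]
whence, using $p>1$,
\[
(p-1)\int_M \phi^2 u^{p-2}|\nabla u|^2\,\d v_g\le 2\int_M \phi u^{p-1}|\nabla\phi|\,|\nabla u|\,\d v_g.
\]

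Next I would split the right-hand integrand as $2\bigl(\phi u^{(p-2)/2}|\nabla u|\bigr)\bigl(u^{p/2}|\nabla\phi|\bigr)$ and apply Young's inequality with weight $\tfrac{p-1}{2}$ to absorb the first factor into the left-hand side, obtaining
\[
\frac{p-1}{2}\int_{B_q(R)} u^{p-2}|\nabla u|^2\,\d v_g
\le \frac{p-1}{2}\int_M \phi^2 u^{p-2}|\nabla u|^2\,\d v_g
\le \frac{2}{p-1}\int_M u^p|\nabla\phi|^2\,\d v_g
\le \frac{2C^2}{(p-1)R^2}\int_{B_q(2R)\setminus B_q(R)} u^p\,\d v_g,
\]
where I used that $\nabla\phi$ is supported in the annulus $B_q(2R)\setminus B_q(R)$. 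Since $u\in L^p(v_g)$, the annular integral tends to $0$ as $R\to\infty$ (and the factor $R^{-2}$ already forces this), so letting $R\to\infty$ yields $\int_M u^{p-2}|\nabla u|^2\,\d v_g=0$. As $u$ is everywhere positive, this forces $\nabla u\equiv 0$, i.e. $u$ is constant.

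I do not anticipate a genuine obstacle: the only mildly delicate points are that all integrals before the limit live on the compact set $\supp\phi$, where positivity and smoothness of $u$ keep $u^{p-2}$ harmless even in the range $1<p<2$; that $p>1$ is exactly what makes the coefficient $p-1$ strictly positive so the absorption step works; and that completeness is used only to produce the exhausting family of cutoffs $\phi$ with $|\nabla\phi|\le C/R$. (The same computation, with $\Delta$ replaced by $\Delta_\psi$ and $v_g$ by $\mu$, immediately gives the weighted analogue used elsewhere in the paper, since $\int_M f\,\Delta_\psi g\,\d\mu=-\int_M\nabla f\cdot\nabla g\,\d\mu$.)
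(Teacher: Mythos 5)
Your proposal is correct and is essentially the same argument the paper uses (for the weighted analogue, Theorem \ref{lp-liouville-1}, whose proof specializes to this statement): test $\Delta u\ge 0$ against $\phi^2u^{p-1}$ with a cutoff satisfying $|\nabla\phi|\lesssim 1/R$, integrate by parts, absorb the cross term (the paper via Cauchy--Schwarz and squaring, you via Young's inequality --- equivalent manipulations), and let $R\to\infty$ using $u\in L^p$ to conclude $\nabla u\equiv 0$. The only cosmetic caveat is that your distance-based cutoff is Lipschitz rather than smooth, which is handled by a standard smoothing and does not affect the argument.
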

We also have the following Liouville theorem, which is obtained in \cite{munteau_wang-1} by combining the mean value inequality and the Bochner formula.
We generalize this theorem in Section 5.
\begin{theorem}(\cite[Theorem 3.2]{munteau_wang-1})\label{wang-sublinear}
    Let $(M,g,\mu)$ be an $n$-dimensional complete weighted Riemannian manifold with $\Ric_{\psi}^{\infty} \geq 0$ and $\psi$ bounded. Then, any $\psi$-harmonic function of sublinear growth is necessarily constant.
\end{theorem}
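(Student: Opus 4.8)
The plan is to establish a scale-invariant gradient estimate of the form
\[
\sup_{B_q(R/2)}|\nabla u|\ \le\ \frac{C}{R}\,\operatorname{osc}_{B_q(2R)}u,
\]
with a constant $C$ independent of $R$, and then let $R\to\infty$: since $u$ has sublinear growth, i.e. $\sup_{B_q(R)}|u|=o(R)$ as $R\to\infty$, we get $R^{-1}\operatorname{osc}_{B_q(2R)}u\to0$, and the estimate forces $\nabla u\equiv0$ on $M$ (the balls $B_q(R/2)$ exhaust $M$), so $u$ is constant. The key preliminary observation is that $\Ric_\psi^\infty\ge0$ is precisely the $\eps$-range curvature bound $\Ric_\psi^N(v)\ge K\e^{\frac{4(\eps-1)}{n-1}\psi(x)}g(v,v)$ in the degenerate case $N=\infty$, $K=0$, for any $\eps$ with $|\eps|<1$ (the $\eps$-range for $N=\infty$); here the weight $\e^{\frac{4(\eps-1)}{n-1}\psi}$ is harmless because $\psi$ is bounded, so $\e^{-\frac{2(\eps-1)}{n-1}\psi}$ lies between two positive constants $a\le b$. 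Hence Theorem~\ref{bishop_gromov} with $K=0$ gives the scale-invariant doubling bound $V_q(2R)\le C_v V_q(R)$, and Theorem~\ref{poincare_thm}, applied with $K>0$ arbitrarily small and then letting $K\to0$, gives the scale-invariant Neumann--Poincar\'{e} inequality $\int_{B_q(r)}|f-f_{B_q(r)}|^2\,\d\mu\le 2^{n+3}(2b/a)^{1/c}\,r^2\int_{B_q(2r)}|\nabla f|^2\,\d\mu$ (its exponential factor becomes $1$). Doubling together with a scale-invariant Neumann--Poincar\'{e} inequality yields, by the standard argument, a scale-invariant local Sobolev inequality on each $B_q(R)$ (equivalently one invokes the modified local Sobolev inequality of Section~3 in the case $K=0$), so the constants produced by Moser's iteration on $B_q(R)$ do not depend on $R$.

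Next I would invoke the weighted Bochner formula: for a $\psi$-harmonic $u$,
\[
\tfrac12\Delta_\psi|\nabla u|^2=|\nabla^2 u|^2+\Ric_\psi^\infty(\nabla u,\nabla u)\ \ge\ 0,
\]
so $w:=|\nabla u|^2$ is a non-negative smooth $\psi$-subharmonic function. Applying the weighted analogue, established in Section~3, of the mean value inequality of Theorem~\ref{sub-lemma} with vanishing forcing term, the doubling and Sobolev constants above, and $\theta=\tfrac12$, $k=1$, yields
\[
\sup_{B_q(R/2)}|\nabla u|^2\ \le\ \frac{C}{V_q(R)}\int_{B_q(R)}|\nabla u|^2\,\d\mu.
\]
For the right-hand side I would use a Caccioppoli (reverse Poincar\'{e}) estimate: testing $\Delta_\psi u=0$ against $(u-c)\phi^2$ for a cutoff $\phi$ with $\phi\equiv1$ on $B_q(R)$, $\supp\phi\subset B_q(2R)$ and $|\nabla\phi|\le2/R$, and integrating by parts with respect to $\mu$, one gets $\int_{B_q(R)}|\nabla u|^2\,\d\mu\le\tfrac{16}{R^2}\int_{B_q(2R)}(u-c)^2\,\d\mu$; choosing $c$ as the midpoint of the range of $u$ on $B_q(2R)$ bounds this by $\tfrac{C}{R^2}\operatorname{osc}_{B_q(2R)}^2u\cdot V_q(2R)$. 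Combining with $V_q(2R)\le C_v V_q(R)$ produces the claimed gradient estimate, and letting $R\to\infty$ finishes the argument.

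The hard part is not Moser's iteration itself but checking that \emph{all} constants are genuinely independent of $R$ in the degenerate regime $K=0$: the Bishop--Gromov comparison (clearly scale-invariant at $K=0$), the Neumann--Poincar\'{e} and local Sobolev inequalities, and hence the Moser constant $C_{sub}$. This is exactly where the $\eps$-range formulation and the boundedness of $\psi$ enter — one cannot argue with $\Ric_g$ directly, since $\Ric_\psi^\infty\ge0$ with $\psi$ bounded provides no lower bound on $\Ric_g$. A minor technical point is the dimensional restriction $n\ge3$ in Theorem~\ref{sobolev_thm}; the cases $n=1,2$ are either trivial or are covered by the version of the Sobolev inequality with an arbitrary $\nu_1\in(1,\infty)$.
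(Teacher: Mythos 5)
Your proposal is correct and follows essentially the same route as the paper's proof (of the generalization, Theorem \ref{liouville-negative}): reinterpret $\Ric_\psi^\infty\ge 0$ with bounded $\psi$ as the $\eps$-range bound with $K=0$ so that all Moser/mean-value constants are scale-invariant, use the Bochner inequality to make $|\nabla u|^2$ $\psi$-subharmonic, apply the mean value inequality to it, and close with a Caccioppoli estimate, volume doubling, and the sublinear growth as $R\to\infty$. The only cosmetic difference is that you subtract a constant and use the oscillation of $u$ on $B_q(2R)$, while the paper tests against $u\phi^2$ and bounds by $\sup_{B_q(2R)}u^2$; both exploit sublinearity identically.
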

Here, a $\psi$-harmonic function $u$ is said to be of sublinear growth if 
\begin{equation*}
    \lim_{d(q,x)\rightarrow \infty}\frac{|u(x)|}{d(q,x)} = 0
\end{equation*}
for some $q\in M$.
Combining the elliptic Harnack inequality and the Bochner formula, 
the following gradient estimate for $\psi$-harmonic functions was obtained in \cite{soliton}.
We will present a similar result in Section 6 under lower bounds of $\Ric_\psi^N$ with $\eps$-range and boundedness weight function.
\begin{theorem}(\cite[Theorem 3.1]{soliton})\label{grad-est-thm}
    Let $(M,g,\mu)$ be an $n$-dimensional complete weighted Riemannian manifold with $\Ric_\psi^{\infty} \geq-(n-1)$. Assume that there exists a constant $h > 0$ such that 
   $$
   \sup _{y \in B_x(1)}|\psi(y)-\psi(x)| \leq h 
   $$
   for any $x \in M$.
   Let $u$ be a positive $\psi$-harmonic function.
   Then, there exists a constant $C(n, h)$ depending only on $n$ and $h$ such that we have
   $$
   |\nabla \log u| \leq C(n, h) .
   $$
   \end{theorem}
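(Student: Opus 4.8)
The plan is to reproduce, for the weighted Laplacian, the Cheng--Yau-type gradient estimate in its ``elliptic Harnack plus Bochner'' form. Since the hypotheses are uniform over $M$, it suffices to bound $|\nabla\log u|$ at an arbitrary point $x_0\in M$, working on the unit ball $B_{x_0}(1)$. Put $f:=\log u$ and $w:=|\nabla f|^2=|\nabla\log u|^2$; both are smooth since $u>0$, and $\Delta_\psi u=0$ gives $\Delta_\psi f=-w$. The weighted Bochner formula for $N=\infty$ together with $\Ric_\psi^\infty\ge-(n-1)$ yields
\[
\tfrac12\Delta_\psi w=|\nabla^2 f|^2-\langle\nabla f,\nabla w\rangle+\Ric_\psi^\infty(\nabla f,\nabla f)\ \ge\ |\nabla^2 f|^2-\langle\nabla f,\nabla w\rangle-(n-1)w
\]
on $B_{x_0}(1)$. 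Introduce the drift operator $L:=\Delta_\psi+2\langle\nabla f,\nabla\,\cdot\,\rangle$, which is precisely the weighted Laplacian $\Delta_{\psi-2f}$ of the measure $\tilde\mu:=\e^{-(\psi-2f)}v_g=u^2\mu$. The inequality above rearranges to the clean subsolution inequality
\[
Lw\ \ge\ 2|\nabla^2 f|^2-2(n-1)w\ \ge\ -2(n-1)w ,
\]
so $w\ge0$ is an $L$-subsolution on $B_{x_0}(1)$ with \emph{constant} zeroth-order coefficient $2(n-1)$.

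The second ingredient is a uniform supply of local functional inequalities. From $\Ric_\psi^\infty\ge-(n-1)$ and $\sup_{B_x(1)}|\psi-\psi(x)|\le h$ one gets, on every ball of radius $\le1$ in $M$, a local volume-doubling estimate, a Neumann--Poincar\'e inequality and a local Sobolev inequality, all with constants depending only on $n$ and $h$; consequently the corresponding weighted elliptic Harnack inequality (the weighted analogue of Theorem \ref{elliptic-harnack}) applies to the $\psi$-harmonic function $u$ and yields $\sup_{B_{x_0}(1/2)}u\le C_H\inf_{B_{x_0}(1/2)}u$ with $C_H=C_H(n,h)$. In particular $u$ is comparable to a constant on $B_{x_0}(1/2)$, so $\tilde\mu=u^2\mu$ and $\mu$ are mutually comparable there with ratio at most $C_H^2$; hence the volume-doubling, Poincar\'e and Sobolev inequalities persist, on sub-balls of $B_{x_0}(1/2)$, for $\tilde\mu$, again with constants of the form $C(n,h)$.

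Now I run a Moser iteration on the $L$-subsolution $w$. The Moser argument behind Theorem \ref{sub-lemma} uses only the Sobolev inequality and a volume ratio bound for the background measure, so it applies verbatim to $L$ and $\tilde\mu$; with the constant potential $2(n-1)$ (so the quantity ``$AR^2$'' is bounded in terms of $n$ on balls of radius $\le1$) it produces $C(n,h)>0$ with
\[
\sup_{B_{x_0}(1/4)}w\ \le\ C(n,h)\,\tilde\mu\big(B_{x_0}(1/2)\big)^{-1}\int_{B_{x_0}(1/2)}w\,\d\tilde\mu .
\]
The mean on the right is controlled by a curvature-free Caccioppoli estimate: testing $\Delta_\psi f=-w$ against $\phi^2$, where $\phi$ is a cutoff equal to $1$ on $B_{x_0}(1/2)$, supported in $B_{x_0}(1)$ with $|\nabla\phi|\le 8$, and using Young's inequality gives $\int_{B_{x_0}(1/2)}w\,\d\mu\le 4\int|\nabla\phi|^2\,\d\mu\le C\,\mu\big(B_{x_0}(1)\big)$; dividing by $\mu(B_{x_0}(1/2))$ and using volume doubling bounds $\mu(B_{x_0}(1/2))^{-1}\int_{B_{x_0}(1/2)}w\,\d\mu$ by $C(n,h)$, and comparability of $\tilde\mu$ and $\mu$ gives the same bound for $\tilde\mu$. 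Combining the two displays, $|\nabla\log u|^2(x_0)=w(x_0)\le\sup_{B_{x_0}(1/4)}w\le C(n,h)$, and since $x_0$ was arbitrary this is the claim.

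The main obstacle is the second ingredient. In contrast to the ranges $N\in(-\infty,1]\cup[n,\infty)$, a constant lower bound on $\Ric_\psi^\infty$ by itself provides \emph{no} Bishop--Gromov-type comparison, so the local volume-doubling, Poincar\'e and Sobolev inequalities on unit balls -- equivalently, the weighted elliptic Harnack inequality -- cannot be quoted from Section~2. They must be produced genuinely locally, from the weighted Bochner/Riccati comparison for the distance function together with the unit-scale oscillation bound on $\psi$, which is where that hypothesis is consumed (this is the local volume and Poincar\'e analysis of \cite{munteau_wang-1}). Once these are available, the remaining steps -- propagating them through the passage from $\mu$ to $\tilde\mu=u^2\mu$ and the standard cutoff bookkeeping in the Moser iteration -- are routine.
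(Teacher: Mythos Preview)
Your argument is sound, and the acknowledgment of the ``main obstacle'' (the local doubling/Poincar\'e/Sobolev package under only a unit-scale oscillation bound on $\psi$) is exactly right: that is where \cite{munteau_wang-1,soliton} do the real work, and once those inequalities are in hand the rest of your scheme goes through.

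However, your route differs from the one in \cite{soliton} (which this paper also follows in the proof of Theorem~\ref{grad-est-eps-range}). There one works with $|\nabla u|^2$, not $|\nabla\log u|^2$: from $\Delta_\psi u=0$ and $\Ric_\psi^\infty\ge -(n-1)$ the Bochner inequality gives directly $\Delta_\psi|\nabla u|^2\ge -2(n-1)|\nabla u|^2$, so $|\nabla u|^2$ is a $\Delta_\psi$-subsolution with constant coefficient and \emph{no drift term}. Moser iteration is then run for $\Delta_\psi$ and the original measure $\mu$, yielding $\sup_{B_x(1/16)}|\nabla u|^2\le C V_x(1/16)^{-1}\int_{B_x(1/8)}|\nabla u|^2\,\d\mu$; the Caccioppoli estimate for $u$ (rather than $\log u$) gives $\int_{B_x(1/8)}|\nabla u|^2\,\d\mu\le C\,V_x(1/4)\big(\sup_{B_x(1/4)}u\big)^2$; combining, $|\nabla u|(x)\le C\sup_{B_x(1/4)}u$, and only \emph{then} is the Harnack inequality invoked to replace $\sup u$ by $u(x)$. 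By contrast, you pass to $f=\log u$, pick up the drift $2\langle\nabla f,\nabla\cdot\rangle$, and must invoke Harnack \emph{first} to make $\tilde\mu=u^2\mu$ comparable to $\mu$ before the Moser machinery applies to $L=\Delta_{\psi-2f}$. Both strategies are valid; the paper's avoids the change of measure and the bookkeeping needed to transfer the Sobolev/doubling constants from $\mu$ to $\tilde\mu$, while yours is closer in spirit to the classical Cheng--Yau computation and makes the scale-invariance of the conclusion (a bound on $|\nabla\log u|$ rather than $|\nabla u|$) visible from the start.
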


\section{Functional inequalities}
The goal of this section is to show the mean value inequality under lower bounds of $\Ric_\psi^N$ with $\eps$-range.
\subsection{Sobolev inequality}
In this subsection, we generalize the local Sobolev inequality and the local Poincar\'{e} inequality.
Using these functional inequalities, we obtain the modified local Sobolev inequality (Theorem \ref{sobolev-eps-range}).
First, we show the following Neumann-Poincar\'{e} inequality, which will be used to derive the local Sobolev inequality.
\begin{theorem}\label{modified-neumann-poincare-thm}
        Let $(M,g,\mu)$ be an $n$-dimensional complete weighted Riemannian manifold with $N \in (-\infty,1] \cup [n,+\infty]$,
        $\ez \in \R$ in the $\ez$-range \eqref{epsilin-range} and $b \ge a>0$.
        Assume that
        %\[ \Ric_\psi^m(v)\ge -K\e^{\frac{4(\ez-1)}{n-1}\psi(x)} g(v,v) \]
        %holds for all $v \in T_xM \setminus 0$ and
        \begin{equation}
        a \le \e^{-\frac{2(\ez-1)}{n-1}\psi} \le b.
        \end{equation} 
        Then, we have 
        \begin{equation}\label{local-poincare-ineq}
            \int_{B_q(R)}|\varphi - \varphi_{B_q(R)}|^2 \d \mu \leq 2^{n + 3} \left(\frac{2b}{a}\right)^{\frac{1}{c}} \exp\left( \sqrt{\frac{K_\eps(q,2R)}{c}} \frac{2R}{a} \right) R^2 \int_{B_q(2R)}|\nabla \varphi |^2 \ \d \mu 
        \end{equation}
        for any $\varphi\in C_0(B_q(R))$.
\end{theorem}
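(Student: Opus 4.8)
The plan is to recognize Theorem~\ref{modified-neumann-poincare-thm} as the \emph{localized} version of the Neumann--Poincar\'e inequality \eqref{poincare_inequality} of Theorem~\ref{poincare_thm}: the right-hand side carries exactly the same constant, with the global curvature constant $K$ replaced by $K_\eps(q,2R)$, a quantity that only records the curvature inside $B_q(2R)$. Accordingly I would not prove a new inequality from scratch but would instead re-run the proof of Theorem~\ref{poincare_thm} (\cite[Theorem~7]{yasuaki}) after checking that it invokes the curvature hypothesis only on $B_q(2R)$.

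Concretely, set $\tilde K:=K_\eps(q,2R)\ge 0$. Directly from the definitions of $K_\eps(x)$ and $K_\eps(q,2R)$, for every $x\in B_q(2R)$ and every $v\in T_xM\setminus 0$ one has
\[ \Ric_\psi^N(v)\ \ge\ -\tilde K\,\e^{\frac{4(\eps-1)}{n-1}\psi(x)}\,g(v,v), \]
so the curvature bound required in Theorem~\ref{poincare_thm} holds, but only at points of $B_q(2R)$. Now recall the structure of the proof of Theorem~\ref{poincare_thm}: one bounds $\int_{B_q(R)}|\varphi-\varphi_{B_q(R)}|^2\,\d\mu$ by the double integral $V_q(R)^{-1}\int_{B_q(R)}\int_{B_q(R)}|\varphi(x)-\varphi(y)|^2\,\d\mu(x)\,\d\mu(y)$, estimates $|\varphi(x)-\varphi(y)|^2\le d(x,y)\int_0^{d(x,y)}|\nabla\varphi|^2(\gamma_{xy}(s))\,\d s$ along a minimizing geodesic $\gamma_{xy}$, and converts this into a multiple of $R^2\int_{B_q(2R)}|\nabla\varphi|^2\,\d\mu$ by a segment/occupation-density estimate whose constants are supplied by the Bishop--Gromov comparison (Theorem~\ref{bishop_gromov}), i.e.\ ultimately by the monotonicity \eqref{non_increasing_property}. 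The key observation is that \eqref{non_increasing_property} is a statement \emph{along a single geodesic}, hence is only needed along the geodesics that actually occur; and if $x,y\in B_q(R)$ then any point $z$ of a minimizing geodesic from $x$ to $y$ satisfies $2d(q,z)\le\big(d(q,x)+d(x,z)\big)+\big(d(q,y)+d(y,z)\big)=d(q,x)+d(q,y)+d(x,y)<4R$, so $z\in B_q(2R)$. Thus the curvature hypothesis (and likewise the bound $a\le\e^{-\frac{2(\eps-1)}{n-1}\psi}\le b$) is used only on $B_q(2R)$, and the proof of Theorem~\ref{poincare_thm} carries over verbatim with $K$ replaced by $\tilde K$, which gives \eqref{local-poincare-ineq} when $\tilde K>0$.

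Finally, if $\tilde K=0$, i.e.\ $\Ric_\psi^N\ge 0$ on $B_q(2R)$, then for every $\delta>0$ we still have $\Ric_\psi^N(v)\ge -\delta\,\e^{\frac{4(\eps-1)}{n-1}\psi(x)}g(v,v)$ on $B_q(2R)$, so the case already treated applies with $\tilde K$ replaced by $\delta$; letting $\delta\downarrow 0$ and using $\exp\!\big(\sqrt{\delta/c}\,\tfrac{2R}{a}\big)\to 1=\exp\!\big(\sqrt{\tilde K/c}\,\tfrac{2R}{a}\big)$ yields \eqref{local-poincare-ineq} in this case as well. I expect the only genuine work to be the bookkeeping asserted above: verifying that \emph{every} appeal to volume comparison in the proof of \cite[Theorem~7]{yasuaki} can be phrased through the directional monotonicity \eqref{non_increasing_property} along minimizing geodesics contained in $B_q(2R)$ (equivalently, that the segment inequality there only probes such geodesics). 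Once that is checked, no new analytic input is needed.
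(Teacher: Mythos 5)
Your proposal is correct and follows essentially the same route as the paper: the paper's own (sketched) proof likewise re-runs the argument of \cite[Theorem 7]{yasuaki}, applying the monotonicity \eqref{non_increasing_property} with $K=K_\eps(q,2R)$ to get the Jacobian lower bound \eqref{modified-sobolev-eq-1} along geodesics joining points of $B_q(R)$ (which stay in $B_q(2R)$), exactly the localization you describe. Your extra $\delta\downarrow 0$ step for $K_\eps(q,2R)=0$ is harmless but unnecessary, since the comparison argument works directly with $\bs_0(t)=t$.
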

\underline{\emph{Sketch of proof}}

We apply the argument in \cite[Theorem 7]{yasuaki}.
For any $x,y\in M$, let
\begin{equation*}
   \gamma_{x,y}:[0,d(x,y)] \rightarrow M
\end{equation*}
be a geodesic from $x$ to $y$ parameterized by arclength. Then, we set 
\begin{equation*}
   l_{x,y}(t) = \gamma_{x,y}(td(x,y))
\end{equation*}
for $t\in[0,1]$ and let $J_{x,t}$ be the Jacobian of the map $\Phi_{x,t}: y\mapsto l_{x,y}(t)$. 
By the same argument as in \cite[Theorem 7]{yasuaki}, we obtain the following inequality:
\begin{equation}\label{modified-sobolev-eq-2}
   \int_{B_q(R)}|\varphi - \varphi_{B_q(R)}|^2 \d \mu \leq  2 (2R)^2 F(R)  \int_{B_q(2R)}|\nabla \varphi (z)|^2 \ \d \mu (z),
\end{equation}
where $F(R)$ is the function such that 
\begin{equation}\label{j-lower-bound}
   \forall x,y \in B_q(R),\forall t\in [1/2,1],\quad J_{x,t}(y) \geq \frac{1}{F(R)}.
\end{equation}
%Arguments for finding suitable $F(R)$ satisfying \eqref{j-lower-bound} have also been made in \cite[Theorem 7]{yasuaki}, and those arguments are conducted on $B_q(2R)$.
Applying the property \eqref{non_increasing_property} with $K = K_\eps(q,2R)$ to the argument in \cite[Theorem 7]{yasuaki}, we get
\begin{equation}\label{modified-sobolev-eq-1}
   \forall x,y \in B_q(R),\forall t\in [1/2,1],\quad J_{x,t}(y) \geq \left(\frac{1}{2}\right)^n\left(\frac{a}{2b}\right)^{1/c}\exp\left(-\sqrt{\frac{K_\eps(q,2R)}{c}}\frac{2R}{a}\right).
\end{equation}
Combining \eqref{modified-sobolev-eq-2} with \eqref{modified-sobolev-eq-1}, we obtain the desired inequality \eqref{local-poincare-ineq}.
\qed

By modifying the argument in \cite[Theorem 8]{yasuaki}, we have the following local Sobolev inequality.
\begin{theorem}(Local Sobolev inequality)\label{sobolev_thm_revisited}
    Under the same assumptions as Theorem \ref{modified-neumann-poincare-thm},
    there exist positive constants $\widetilde{D}_1, \widetilde{E}_1$ depending on $c,a,b, n$ such that 
    \begin{equation}\label{modified-local-sobolev}
    \left(V_q(R)^{-1}\int_{B_q(R)}|\varphi|^{\frac{2\nu}{\nu-2}}\ \d\mu\right)^{\frac{\nu-2}{\nu}}\leq \widetilde{E}_1\exp\left(\widetilde{D}_1\sqrt{K_{\eps}(q,10R)}R\right)R^2 V_q(R)^{-1}\int_{B_q(R)}(|\nabla \varphi|^2 + R^{-2} \varphi^2)\d\mu
    \end{equation}
    for all $B_q(R)\subset M$ and $\varphi\in C_0^{\infty}(B_q(R))$, where we set
    \begin{equation}\label{def-of-nu}
        \nu = \begin{cases}
            3 & \quad \mbox{if } c = 1,\\
            1 + \frac{1}{c} &\quad \mbox{if } c < 1 .
        \end{cases}
    \end{equation}
 \end{theorem}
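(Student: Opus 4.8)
The plan is to feed the Neumann--Poincar\'e inequality of Theorem \ref{modified-neumann-poincare-thm} together with the volume doubling coming from the Bishop--Gromov comparison (Theorem \ref{bishop_gromov}) into the standard mechanism that upgrades a scale-invariant Neumann--Poincar\'e inequality on all balls, together with doubling, to a family of local Sobolev inequalities of the form stated in \eqref{modified-local-sobolev}; this is exactly the route of \cite[Theorem 8]{yasuaki}, and the only two modifications needed are (i) replacing the constant curvature bound $-K$ by the \emph{local} quantity $-K_\eps(q,\lambda R)$, and (ii) replacing the manifold dimension $n$ by the ``effective dimension'' $\nu$ of \eqref{def-of-nu}, so as to cover all $n$ --- in particular $n=2$, where the manifold dimension coincides with the critical Sobolev dimension.

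\textbf{Localization and doubling.}
Fix $R>0$ and $q\in M$. By the very definition $K_\eps(q,\lambda R)=\sup_{B_q(\lambda R)}K_\eps$, the variable curvature bound $\Ric_\psi^N(v)\ge -K_\eps(q,\lambda R)\,\e^{\frac{4(\eps-1)}{n-1}\psi(x)}g(v,v)$ holds for every $v\in T_xM$ with $x\in B_q(\lambda R)$, so, under the hypothesis $a\le \e^{-\frac{2(\eps-1)}{n-1}\psi}\le b$, Theorems \ref{laplacian_comparison_theorem} and \ref{bishop_gromov} apply on every ball contained in $B_q(\lambda R)$ with $K=K_\eps(q,\lambda R)$. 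In particular Theorem \ref{bishop_gromov} yields, for $K=0$, the polynomial bound $V_{q'}(\rho')/V_{q'}(\rho)\le (b/a)^{2+1/c}(\rho'/\rho)^{1+1/c}$ for $\rho\le\rho'$, i.e.\ doubling with underlying dimension $1+1/c$, and for $K>0$ the same with an extra factor of the form $\exp(\widetilde D\sqrt{K_\eps(q,\lambda R)}\,R)$, the constants depending only on $c,a,b$. Running the chaining/truncation argument of \cite[Theorem 8]{yasuaki} with these inputs never leaves a dilate $B_q(\lambda R)$ with $\lambda\le 10$ of the original ball, so every curvature term encountered is dominated by $K_\eps(q,10R)$; collecting all the exponential prefactors produced by Theorems \ref{modified-neumann-poincare-thm} and \ref{bishop_gromov} into a single factor $\widetilde E_1\exp(\widetilde D_1\sqrt{K_\eps(q,10R)}\,R)$ (absorbing the $c,a,b,n$ dependence into $\widetilde D_1,\widetilde E_1$) produces \eqref{modified-local-sobolev}.

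\textbf{The role of the effective dimension.}
When $c<1$ one has $\nu=1+\tfrac{1}{c}>2$, the doubling dimension $1+\tfrac1c$ is subcritical, and the machinery delivers the Sobolev exponent $\tfrac{2(1+c)}{1-c}=\tfrac{2\nu}{\nu-2}$ directly; this is the case already treated in \cite[Theorem 8]{yasuaki} for $n\ge 3$, and inspecting that proof one sees it does not genuinely use $n\ge 3$ once $n$ is replaced by $1+\tfrac1c$. The case $c=1$ is the one new point: here the natural ``dimension'' is $2$, the critical case for which no $L^2$-Sobolev inequality of this shape can hold, so instead I would use the doubling estimate with the larger exponent $\nu'=3$ --- legitimate because $V_{q'}(\rho')/V_{q'}(\rho)\le C(\rho'/\rho)^2\le C(\rho'/\rho)^3$ for $\rho\le\rho'$ --- and run the same derivation with $\nu'=3$ in place of $1+\tfrac1c$; this outputs precisely the exponent $\tfrac{2\cdot 3}{3-2}=6$, which is the value of $\tfrac{2\nu}{\nu-2}$ prescribed by \eqref{def-of-nu}. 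Since $\nu'=3$ is an explicit number, the resulting constants still depend only on $c,a,b,n$.

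\textbf{Main obstacle.}
The difficulty here is bookkeeping rather than conceptual: one must go through \cite[Theorem 8]{yasuaki} step by step and check (a) that the argument is genuinely dimension-robust, i.e.\ that replacing $n$ by the effective dimension is harmless and, in particular, that the critical case $c=1$ can be circumvented by the above dimension-raising trick without introducing constants that blow up; and (b) that the dilation factor accumulated across all the applications of the Poincar\'e inequality, of volume doubling, and of the iteration/truncation steps stays below $10$, so that $K_\eps(q,10R)$ really does bound every curvature quantity that appears. Everything else is a routine transcription in which each occurrence of the constant $K$ in \cite[Theorem 8]{yasuaki} is first replaced by $K_\eps(q,\lambda R)$ and then estimated by $K_\eps(q,10R)$.
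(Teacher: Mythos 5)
Your proposal takes essentially the same route as the paper: the paper also localizes every curvature term to $K_{\eps}(q,10R)$, feeds the Neumann--Poincar\'e inequality of Theorem \ref{modified-neumann-poincare-thm} and the Bishop--Gromov doubling into the mollifier/Nash-type machinery of \cite[Theorem 8]{yasuaki} (carried out in Lemmas \ref{claim-1} and \ref{claim-2}, then optimized in $s$), and treats $c=1$ exactly by your dimension-raising device, i.e.\ using $\nu\ge 1+\tfrac1c$ so that the volume-growth exponent can be increased to $3>2$. So the strategy is correct and matches the paper's proof.
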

 %This inequality is obtained by discussing the argument in \cite[Theorem 7]{yasuaki} on $B_q(R)$. Here, we take the curvature bounds $K_{\eps}(q,10R)$ since when conducting the argument of \cite[Theorem 7]{yasuaki} on $B_q(R)$, we sometimes need the curvature bounds outside $B_q(R)$. 
 %This inequality \eqref{modified-neumann-poincare} and the following Claim \ref{claim-1} and Claim \ref{claim-2} are the key ingredients in the proof of \eqref{modified-local-sobolev}. 
We remark that we have $c = 1$ only when $n = 2$ and $\eps = 0$, and $c < 1$ otherwise. 
We set $\nu = 3$ for $c = 1$ since we need $\nu > 2$ in the following argument.
We first present two lemmas before proving Theorem \ref{sobolev_thm_revisited}.
\begin{lemma}\label{claim-1}
Under the same assumption as Theorem \ref{modified-neumann-poincare-thm},
we have
    \begin{equation}\label{claim-1-eq}
        \|\varphi_s\|_2 \leq \left(\frac{b}{a}\right)^{3 + \frac{3}{2c}}2^{\frac{1}{2}\left(1 + \frac{1}{c}\right)}\left(\frac{4R}{s}\right)^{\frac{1}{2}\left(1 + \frac{1}{c}\right)}\exp\left(\sqrt{\frac{K_\eps(q,10R)}{c}}\frac{9R}{2a}\right)\frac{1}{\sqrt{V_q(R)}}\|\varphi\|_1
    \end{equation}
    for all $0 < s < R$ and $\varphi\in C_0^{\infty}(B_q(R))$, where we set 
    \begin{equation*}
         \chi_s (x,z) = \frac{1}{V_x(s)}1_{B_x(s)}(z)  \quad \mbox{and} \quad \varphi_s(x) = \int \chi_s(x,z) \varphi(z)\  \d\mu(z).
    \end{equation*}
\end{lemma}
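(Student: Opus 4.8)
\emph{Proof strategy.} The plan is to read \eqref{claim-1-eq} as an $L^1\to L^2$ bound for the averaging operator $\varphi\mapsto\varphi_s$ and to obtain it by interpolating an $L^1\to L^1$ estimate against an $L^1\to L^\infty$ estimate, every volume ratio being controlled by the Bishop--Gromov comparison theorem (Theorem~\ref{bishop_gromov}) applied with curvature parameter $-K_\eps(q,10R)$. This application needs no extra hypothesis: by the very definition of $K_\eps(q,10R)=\sup_{x\in B_q(10R)}K_\eps(x)$ one has
\[
\Ric_\psi^N(v)\ \ge\ -K_\eps(q,10R)\,\e^{\frac{4(\eps-1)}{n-1}\psi(x)}\,g(v,v),\qquad v\in T_xM,\ x\in B_q(10R),
\]
and all the geodesics that appear below (joining, or emanating from, points of $B_q(R+s)\subset B_q(2R)$, of length at most a fixed multiple of $R$) stay inside $B_q(10R)$; together with the standing bound $a\le\e^{-\frac{2(\eps-1)}{n-1}\psi}\le b$ of Theorem~\ref{modified-neumann-poincare-thm} this makes Theorem~\ref{bishop_gromov} available throughout. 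I then start from the interpolation inequality
\[
\|\varphi_s\|_2^2=\int_M\varphi_s^2\,\d\mu\ \le\ \|\varphi_s\|_\infty\,\|\varphi_s\|_1 .
\]

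For the $L^1$ factor, write $\chi_s(x,z)=V_x(s)^{-1}\mathbf{1}_{B_x(s)}(z)=V_x(s)^{-1}\mathbf{1}_{B_z(s)}(x)$ and apply Fubini's theorem to get
\[
\|\varphi_s\|_1\ \le\ \int_M|\varphi(z)|\Big(\int_{B_z(s)}\frac{\d\mu(x)}{V_x(s)}\Big)\,\d\mu(z).
\]
If $x\in B_z(s)$ then $B_z(s)\subset B_x(2s)$, hence $V_z(s)\le V_x(2s)$; Bishop--Gromov at the two scales $s<2s$ bounds $V_x(2s)/V_x(s)$ by a constant $C_1=C_1(a,b,c)\exp\!\big(\sqrt{K_\eps(q,10R)/c}\,\tfrac{2R}{a}\big)$ (using $s<R$), so the inner integral is $\le C_1$ and $\|\varphi_s\|_1\le C_1\|\varphi\|_1$.

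For the $L^\infty$ factor, $\varphi_s$ is supported in $\overline{B_q(R+s)}$ and $|\varphi_s(x)|\le V_x(s)^{-1}\|\varphi\|_1$ for such $x$; the task is thus to bound $V_x(s)$ from below. Since $d(q,x)<R+s<2R$ we have $B_q(R)\subset B_x(3R)$, so $V_q(R)\le V_x(3R)$, and Bishop--Gromov at the scales $s<3R$ gives
\[
V_x(s)\ \ge\ \frac{a}{b}\,\frac{\int_0^{s/b}\bs_{-cK}(\tau)^{1/c}\,\d\tau}{\int_0^{3R/a}\bs_{-cK}(\tau)^{1/c}\,\d\tau}\,V_q(R),\qquad K=K_\eps(q,10R).
\]
Using $\tau\le\bs_{-\kappa}(\tau)\le\tau\,\e^{\sqrt{\kappa}\,\tau}$ for $\kappa\ge0$ (the upper one from $\sinh x\le x\cosh x\le xe^{x}$) to bound the numerator below by $\int_0^{s/b}\tau^{1/c}\,\d\tau$ and the denominator above by $\e^{\sqrt{K/c}\,3R/a}\int_0^{3R/a}\tau^{1/c}\,\d\tau$, the ratio is $\ge C_2(a,b,c)(s/R)^{1+1/c}\e^{-\sqrt{K/c}\,3R/a}$, whence $\|\varphi_s\|_\infty\le C_2'(a,b,c)(R/s)^{1+1/c}\e^{\sqrt{K/c}\,3R/a}V_q(R)^{-1}\|\varphi\|_1$. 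Multiplying the $L^1$ and $L^\infty$ bounds and taking square roots yields \eqref{claim-1-eq} after collecting the powers of $b/a$, of $2$, of $4R/s$, and the exponential factor.

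The proof presents no conceptual obstacle: the only real work is the bookkeeping of the constants $a,b,c$ across the two invocations of Theorem~\ref{bishop_gromov} and the $\bs_\kappa$-estimates so as to reach exactly the stated exponents $3+\tfrac{3}{2c}$, $\tfrac12(1+\tfrac1c)$ and $\tfrac{9R}{2a}$ (slightly sharper choices of the auxiliary radii than the $2s$ and $3R$ used above, and possibly one further doubling step, are what account for the precise numbers). The point that must not be overlooked is to fix at the outset a single enclosing ball, here $B_q(10R)$, on which the curvature lower bound with $\eps$-range holds and which contains every geodesic used, so that Theorem~\ref{bishop_gromov} applies where it is invoked.
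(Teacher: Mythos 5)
Your proposal follows essentially the same route as the paper's proof: interpolate $\|\varphi_s\|_2 \le \sqrt{\|\varphi_s\|_\infty\,\|\varphi_s\|_1}$, bound $\|\varphi_s\|_1$ by a doubling--Fubini argument and $\|\varphi_s\|_\infty$ by comparing $V_x(s)$ with $V_q(R)$ via Bishop--Gromov with $K=K_\eps(q,10R)$, after checking all balls lie in $B_q(10R)$. Your slightly different intermediate radii ($3R$ in place of the paper's chain through $2R+s$ and $4R$) yield constants that are dominated by those in \eqref{claim-1-eq} (since $b\ge a$ and the exponential factor is monotone), so the stated inequality follows.
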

\begin{proof}
We apply the argument in \cite[Lemma 1]{yasuaki}.
According to Theorem \ref{bishop_gromov}, for $x\in M$, we have
\begin{eqnarray}\label{vol-comparison-in-lem-1}
\frac{V_x(r)}{V_x(s)} \leq \frac{b}{a}\frac{\int_0^{r/a}\bs_{-cK}(\tau)^{1/c} \ \d \tau}{\int_0^{s/b}\bs_{-cK}(\tau)^{1/c}\ \d \tau} \leq  \left(\frac{b}{a}\right)^{2 + \frac{1}{c}}\left(\frac{r}{s}\right)^{1 + \frac{1}{c}}\exp\left(\sqrt{\frac{K}{c}}\frac{r}{a}\right)\label{lemma-1-eq-0}
\end{eqnarray}
with $K = K_\eps(x,r)$, where the last inequality follows from the direct calculation. 
%Hence, we obtain the following volume doubling property:
%\begin{equation}\label{volume-doubling-property}
%    V_x(2s) \leq V_x(s) \left(\frac{b}{a}\right)^{2 + \frac{1}{c}} 2^{1 + \frac{1}{c}} \exp\left( \sqrt{\frac{K_\eps(x,2s)}{c}}\frac{2s}{a}\right)
%\end{equation}
%for $x\in M$.

We first estimate $\|\varphi_s\|_1$. 
For $x\in \supp\varphi \subset B_q(R)$ and $z\in \supp \chi_s(x, \cdot)$, we have $d(z,x) < s$.
Hence, we find
\begin{eqnarray}
    V_z(s) &\leq & V_x(2s)\nonumber\\
    &\leq & V_x(s)\left(\frac{b}{a}\right)^{2 + \frac{1}{c}} 2^{1 + \frac{1}{c}} \exp\left( \sqrt{\frac{K_\eps(x,2s)}{c}}\frac{2s}{a}\right)\nonumber\\
    &\leq & V_x(s)\left(\frac{b}{a}\right)^{2 + \frac{1}{c}} 2^{1 + \frac{1}{c}} \exp\left( \sqrt{\frac{K_\eps(q,10R)}{c}}\frac{2s}{a}\right)\label{doubling-property},
\end{eqnarray}
where we used $B_x(2s)\subset B_q(3R) \subset B_q(10R)$ in the last inequality. Using \eqref{doubling-property}, we  obtain
\begin{equation*}
    \chi_s (x,z) = \frac{1}{V_x(s)}1_{B_x(s)}(z) \leq \frac{1}{V_z(s)} \left(\frac{b}{a}\right)^{2 + \frac{1}{c}} 2^{1 + \frac{1}{c}} \exp\left( \sqrt{\frac{K_\eps(q,10R)}{c}}\frac{2s}{a}\right) 1_{B_x(s)}(z)
\end{equation*}
for $x\in \supp \varphi$ and $z\in \supp \chi_s(x,\cdot)$.
Therefore, we get
\begin{eqnarray}
    \|\varphi_s\|_1 %&=& \int |\varphi_s(x)|\ \d \mu(x)\nonumber\\
    &=& \int \ \d\mu(x) \left| \int \chi_s(x,z)\varphi(z) \ \d \mu(z) \right|\nonumber \\
    &\leq& \int \ \d\mu(x) \int \frac{1}{V_z(s)} \left(\frac{b}{a}\right)^{2 + \frac{1}{c}} 2^{1 + \frac{1}{c}} \exp\left( \sqrt{\frac{K_\eps(q,10R)}{c}}\frac{2s}{a}\right) 1_{B_x(s)}(z) |\varphi(z)|\ \d\mu(z)\nonumber \\
    &=& \left(\frac{b}{a}\right)^{2 + \frac{1}{c}} 2^{1 + \frac{1}{c}} \exp\left( \sqrt{\frac{K_\eps(q,10R)}{c}}\frac{2s}{a}\right) \|\varphi\|_1\label{lemma-1-eq-1},
\end{eqnarray}
where we used $\int 1_{B_x(s)}(z)\ \d\mu(x) = V_z(s)$ in the last equality.

We next estimate $\|\varphi_s\|_{\infty}$. For $x\in \supp\varphi_s$, we have $B_q(R)\cap B_x(s)\neq \emptyset$. From the volume growth estimate \eqref{lemma-1-eq-0} and $B_x(3R)\subset B_q(10R)$, we obtain
\begin{eqnarray}
    \frac{V_x(2R + s)}{V_x(s)} %&\leq & \left(\frac{b}{a}\right)^{2 + \frac{1}{c}}\left( \frac{2R + s}{s} \right)^{1 + \frac{1}{c}}\exp\left(\sqrt{\frac{K_\eps(x,3R)}{c}}\frac{2R + s}{a} \right)\nonumber\\
    &\leq & \left(\frac{b}{a}\right)^{2 + \frac{1}{c}}\left( \frac{2R + s}{s} \right)^{1 + \frac{1}{c}}\exp\left(\sqrt{\frac{K_\eps(q,10R)}{c}}\frac{2R + s}{a} \right)\label{modified-sobolev-eq-3}.
\end{eqnarray} 
Since $B_x(4R) \subset B_q(10R)$, we also have 
\begin{eqnarray}
    \frac{V_x(4R)}{V_x(2R + s)} %&\leq & \left(\frac{b}{a}\right)^{2 + \frac{1}{c}}\left(\frac{4R}{2R + s}\right)^{1 + \frac{1}{c}}\exp\left(\sqrt{\frac{K_\eps(x,4R)}{c}}\frac{4R}{a}\right)\nonumber\\
    &\leq& \left(\frac{b}{a}\right)^{2 + \frac{1}{c}}\left(\frac{4R}{2R + s}\right)^{1 + \frac{1}{c}}\exp\left(\sqrt{\frac{K_\eps(q,10R)}{c}}\frac{4R}{a}\right)\label{modified-sobolev-eq-4}.
\end{eqnarray}
Combining \eqref{modified-sobolev-eq-3} with \eqref{modified-sobolev-eq-4}, we have 
\begin{eqnarray*}
    \frac{1}{V_x(s)}  &\leq & \left(\frac{b}{a}\right)^{2\left(2 + \frac{1}{c}\right)}\left(\frac{4R}{s}\right)^{1 + \frac{1}{c}}\exp\left(\sqrt{\frac{K_\eps(q,10R)}{c}} \frac{6R + s}{a} \right)\frac{1}{V_x(4R)}\\
    &\leq & \left(\frac{b}{a}\right)^{2\left(2 + \frac{1}{c}\right)}\left(\frac{4R}{s}\right)^{1 + \frac{1}{c}}\exp\left(\sqrt{\frac{K_\eps(q,10R)}{c}} \frac{6R + s}{a} \right) \frac{1}{V_q(R)},
\end{eqnarray*}
where we used $B_q(R) \subset B_x(4R)$ in the latter inequality. Hence, we get
\begin{eqnarray}
    \|\varphi_s\|_\infty %&=& \left\| \int \chi_s(x,z)\varphi(z)\ \d \mu(z) \right\|_{\infty}\\
    &=& \left\| \int \frac{1}{V_x(s)} 1_{B_x(s)} \varphi(z)\ \d\mu(z) \right\|_{\infty}\nonumber\\
    &\leq& \left(\frac{b}{a}\right)^{2\left(2 + \frac{1}{c}\right)}\left(\frac{4R}{s}\right)^{1 + \frac{1}{c}}\exp\left( \sqrt{\frac{K_\eps(q,10R)}{c}}\frac{6R + s}{a} \right)\frac{1}{V_q(R)}\left\| \int 1_{B_x(s)} \varphi(z)\ \d \mu(z) \right\|_{\infty}\nonumber\\
    &\leq& \left(\frac{b}{a}\right)^{2\left(2 + \frac{1}{c}\right)}\left(\frac{4R}{s}\right)^{1 + \frac{1}{c}}\exp\left( \sqrt{\frac{K_\eps(q,10R)}{c}}\frac{6R + s}{a} \right)\frac{\|\varphi\|_1}{V_q(R)}\label{lemma-1-eq-2}.
\end{eqnarray}
From \eqref{lemma-1-eq-1} and \eqref{lemma-1-eq-2}, we have 
\begin{eqnarray*}
    \|\varphi_s\|_2 &\leq &\sqrt{\|\varphi_s\|_{\infty}}\sqrt{\|\varphi_s\|_1}\\
    &\leq& \left(\frac{b}{a}\right)^{3 + \frac{3}{2c}}2^{\frac{1}{2}\left(1 + \frac{1}{c}\right)}\left(\frac{4R}{s}\right)^{\frac{1}{2}\left(1 + \frac{1}{c}\right)}\exp\left(\sqrt{\frac{K_\eps(q,10R)}{c}}\frac{6R+3s}{2a}\right)\frac{1}{\sqrt{V_q(R)}}\|\varphi\|_1.
\end{eqnarray*}
Therefore, we obtain the desired inequality.
\end{proof}
For a ball $B = B_q(R)$ and $k > 0$, $kB$ denotes the ball $B_q(kR)$ in the following arguments.
\begin{lemma}\label{claim-2}
    Under the same assumption as Theorem \ref{modified-neumann-poincare-thm}, we have
    \begin{equation}\label{claim-2-desired-eq}
        \|\varphi - \varphi_s\|_2 \leq 2^{8 + \frac{5}{c} + \frac{n}{2}}\left(\frac{b}{a}\right)^{3 + \frac{2}{c}}\exp\left(\sqrt{\frac{K_\eps(q,10R)}{c}}\frac{11R}{a}\right) s \|\nabla \varphi \|_2
    \end{equation}
    for all $0 < s < R$ and $\varphi \in C_0^{\infty}(B_q(R))$.
\end{lemma}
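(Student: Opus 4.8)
\noindent\underline{\emph{Proof proposal.}}
The plan is to prove Lemma~\ref{claim-2} by the usual self‑improvement of a Poincar\'e inequality (the ``pseudo‑Poincar\'e'' argument), adapted to the $\eps$‑range exactly as in \cite[Lemma 2]{yasuaki}: the only inputs will be the modified Neumann--Poincar\'e inequality of Theorem~\ref{modified-neumann-poincare-thm} and the volume doubling property coming from the Bishop--Gromov comparison (Theorem~\ref{bishop_gromov}, in the form \eqref{doubling-property}). Following the notation of Lemma~\ref{claim-1}, write $\varphi_r(x)=\int\chi_r(x,z)\varphi(z)\,\d\mu(z)$ for the average of $\varphi$ over $B_x(r)$. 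Since $\varphi$ is smooth with compact support in $B_q(R)$, we have $\varphi_{2^{-j}s}\to\varphi$ uniformly, hence in $L^2(\mu)$, as $j\to\infty$, so $\varphi-\varphi_s=\sum_{j\ge 0}\bigl(\varphi_{2^{-j-1}s}-\varphi_{2^{-j}s}\bigr)$, and it suffices to estimate each term at the scale $2^{-j}s<R$. (A direct geodesic/segment estimate of the kind underlying Theorem~\ref{modified-neumann-poincare-thm} would only converge under the extra restriction $c>1/n$, which fails on part of the $\eps$‑range; the telescoping avoids this.)

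For the scale‑$j$ term, fix $x\in M$. Since $B_x(2^{-j-1}s)\subset B_x(2^{-j}s)$ and $\int\chi_{2^{-j-1}s}(x,\cdot)\,\d\mu=1$, we have $\varphi_{2^{-j-1}s}(x)-\varphi_{2^{-j}s}(x)=\int\chi_{2^{-j-1}s}(x,y)\bigl(\varphi(y)-\varphi_{2^{-j}s}(x)\bigr)\,\d\mu(y)$, and, because $\varphi_{2^{-j}s}(x)$ is the mean of $\varphi$ over $B_x(2^{-j}s)$, the Cauchy--Schwarz inequality together with $V_x(2^{-j}s)\le C_D V_x(2^{-j-1}s)$ gives
\[
\bigl|\varphi_{2^{-j-1}s}(x)-\varphi_{2^{-j}s}(x)\bigr|^2\le \frac{C_D^{2}}{V_x(2^{-j}s)}\int_{B_x(2^{-j}s)}\bigl|\varphi-\varphi_{B_x(2^{-j}s)}\bigr|^2\,\d\mu .
\]
I would then apply the modified Neumann--Poincar\'e inequality (Theorem~\ref{modified-neumann-poincare-thm}) on $B_x(2^{-j}s)$ --- whose proof, via \eqref{modified-sobolev-eq-2} and \cite[Theorem 7]{yasuaki}, is valid for arbitrary smooth functions, not only compactly supported ones --- and use doubling once more to replace $V_x(2^{-j}s)^{-1}\int_{B_x(2^{-j+1}s)}$ by the average $\int\chi_{2^{-j+1}s}(x,\cdot)$, obtaining
\[
\bigl|\varphi_{2^{-j-1}s}(x)-\varphi_{2^{-j}s}(x)\bigr|\le C\,2^{-j}s\,\Bigl(\int\chi_{2^{-j+1}s}(x,z)\,|\nabla\varphi(z)|^2\,\d\mu(z)\Bigr)^{1/2},
\]
with $C$ depending only on $n,c,a,b$ and $K_\eps(q,10R)$: all balls that occur lie in $B_q(10R)$ (since $s<R$ and $\supp\varphi\subset B_q(R)$), so every local curvature quantity is dominated by $K_\eps(q,10R)$ and every radius inside an exponential is $\le 10R$.

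Finally I would take $L^2(\mu)$ norms and sum. By Minkowski's inequality and the $L^1(\mu)$‑boundedness of the averaging operator --- $\int_M\bigl(\int\chi_r(x,z)\,g(z)\,\d\mu(z)\bigr)\d\mu(x)\le C_D\int_M g\,\d\mu$ for $g\ge 0$, again a consequence of doubling --- one gets $\|\varphi-\varphi_s\|_2\le C\,C_D^{1/2}\,\|\nabla\varphi\|_2\sum_{j\ge 0}2^{-j}s=2C\,C_D^{1/2}\,s\,\|\nabla\varphi\|_2$. It then remains to insert the explicit doubling constant from \eqref{doubling-property} and the Neumann--Poincar\'e constant from Theorem~\ref{modified-neumann-poincare-thm}, using $s<R$ and $K_\eps(\cdot,\cdot)\le K_\eps(q,10R)$ throughout, and to collect the resulting powers of $2$ and of $b/a$ and the exponential factor. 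I expect this last step --- pure bookkeeping --- to be the real difficulty: one must carry out each doubling and each Poincar\'e application economically enough that the accumulated constant fits inside the stated $2^{8+5/c+n/2}(b/a)^{3+2/c}\exp\!\bigl(\sqrt{K_\eps(q,10R)/c}\cdot 11R/a\bigr)$ (the exponent $11R/a$ and the power $3+2/c$ leave only a modest margin), and one must check that every auxiliary constant --- the $L^1$‑bound of the averaging operator, the scale‑by‑scale doubling constants, the Poincar\'e constant --- depends only on $n,c,a,b$ and $K_\eps(q,10R)$, not on the manifold.
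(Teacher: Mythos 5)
Your proposal is correct, but it proves the lemma by a genuinely different route than the paper. The paper works at the single scale $s$: by Zorn's lemma it selects a maximal $s/2$-separated family in $B_q(2R)$, shows the dilated balls cover $B_q(2R)$ with overlap multiplicity bounded by $N_q=C_1^2$ (via Bishop--Gromov doubling), splits $\|\varphi-\varphi_s\|_2^2$ over the covering as $\int_{2B_i}|\varphi-\varphi_{4B_i}|^2+|\varphi_{4B_i}-\varphi_s|^2$, applies the Neumann--Poincar\'e inequality \eqref{local-poincare-ineq} on each $4B_i$, and sums using the overlap bound, which produces the stated constant as $\sqrt{N_qC_3}$. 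You instead telescope over dyadic scales $2^{-j}s$, estimate each increment $\varphi_{2^{-j-1}s}-\varphi_{2^{-j}s}$ pointwise by one Poincar\'e application plus doubling, and sum in $L^2$ using the $L^1$-boundedness of the averaging operator; this trades the covering/overlap combinatorics for a geometric series. Two points in your favor that you left implicit or flagged as risks: (i) applying Theorem \ref{modified-neumann-poincare-thm} on balls $B_x(2^{-j}s)$ not centered at $q$ and to restrictions of $\varphi$ that are not compactly supported there is legitimate, and is exactly how the paper itself uses \eqref{local-poincare-ineq} in \eqref{modified-sobolev-eq-6}, with all local curvature quantities dominated by $K_\eps(q,10R)$ since every ball involved sits inside $B_q(4R)\subset B_q(10R)$; (ii) the constant bookkeeping you worry about actually closes with room to spare, not a ``modest margin'': one round of (doubling, Poincar\'e on $B_x(r)$, doubling, averaging-operator bound) at scale $r$ gives roughly $\|\varphi_{r/2}-\varphi_r\|_2\le 2^{\frac{n}{2}+3+\frac{2}{c}}\left(\frac{b}{a}\right)^{3+\frac{2}{c}}\exp\left(\sqrt{\frac{K_\eps(q,10R)}{c}}\,\frac{5r}{a}\right)r\,\|\nabla\varphi\|_2$, and summing $r=2^{-j}s\le s<R$ yields a bound of the form $2^{\frac{n}{2}+4+\frac{2}{c}}\left(\frac{b}{a}\right)^{3+\frac{2}{c}}\exp\left(\sqrt{\frac{K_\eps(q,10R)}{c}}\,\frac{5R}{a}\right)s\,\|\nabla\varphi\|_2$, which is strictly smaller than the constant in \eqref{claim-2-desired-eq} (the paper's constant absorbs the overlap factor $\sqrt{N_q}$, which your argument never needs). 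So your approach not only suffices but gives a slightly sharper constant; the only genuine requirement is the care you already noted, namely bounding every exponential by its value at the largest scale so that the constant is uniform in $j$.
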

\begin{proof}
We fix $0 < s < R$. 
By setting the order of sets by inclusion and using Zorn's lemma, we see that there exists a set $X\subset B_q(2R)$ satisfying the following (a) and (b):
\begin{itemize}
    \item[(a)] For distinct $x_i,x_j\in X$, we have $B_{x_i}(s/2)\cap B_{x_j}(s/2) = \emptyset$.
    \item[(b)] For any $x'\in B_q(2R)\setminus X$, there exists $x\in X$ such that $B_x(s/2)\cap B_{x'}(s/2)\neq \emptyset$.
\end{itemize}
We note that (b) indicates the maximality of $X$ as a set satisfying (a).
Labelling the elements in $X$ as $X = \{x_j\in X\ |\ j \in J_q\}$, we denote $B_j = B_{x_j}(s/2)$ for $j\in J_q$.
We see that $\{2B_j\ |\ j\in J_q\}$ is an open covering of $B_q(2R)$. Indeed, for $y\in X\subset B_q(2R)$, we have $y \in \bigcup_{j\in J_q} 2B_j$ trivially. For $y \in B_q(2R)\setminus X$, 
there exists $x_i\in X$ such that $B_y(s/2)\cap B_{x_i}(s/2)\neq \emptyset $. Hence, for $w\in B_y(s/2)\cap B_{x_i}(s/2)$, we have $d(x_i,y)\leq d(x_i,w) + d(w,y) < s$ and hence, $y\in B_{x_i}(s)$.

We conduct the argument in \cite[Lemma 2]{yasuaki} on $B_q(2R)$. 
For $z\in B_q(2R)$, let $J_q(z) = \{i \in J_q\  |\  z \in 8B_i\}$ and $N(z) = \# J_q(z)$.
We first estimate $N(z)$ from above.
Let $B_z$ be a ball in $\{ B_j\ |\ j\in J_q \}$ such that $z\in 2B_z$. For $i\in J_q(z)$, we have $B_z \subset 16 B_i$. Hence, from the volume growth estimate \eqref{vol-comparison-in-lem-1} and $16 B_i \subset B_{x_i}(8R)$, we have 
\begin{eqnarray}
    \mu(B_z) \leq \mu(16B_i) \leq   \left(\frac{b}{a}\right)^{2 + \frac{1}{c}} 16^{1 + \frac{1}{c}}\exp\left( \sqrt{\frac{K_\eps(x_i,8R)}{c}} \frac{8R}{a} \right) \mu(B_i)\label{lemma-2-eq-0}.
\end{eqnarray}
Since $d(q,x_i)  \leq 2R$, we have $B_{x_i}(8R) \subset B_q(10R)$. Hence, we obtain
\begin{equation*}
    \mu(B_z) \leq C_1 \mu(B_i),
\end{equation*}
where 
\begin{equation}\label{lemma-2-eq-1}
    C_1 =  \left(\frac{b}{a}\right)^{2 + \frac{1}{c}} 16^{1 + \frac{1}{c}}\exp\left( \sqrt{\frac{K_\eps(q,10R)}{c}} \frac{8R}{a} \right).
\end{equation}
Therefore, we have
\begin{equation}\label{lemma-2-eq-1-5}
\sum_{i\in J_q(z)}\mu(B_i) \geq N(z)\frac{\mu(B_z)}{C_1}.
\end{equation}
On the other hand, for $i\in J_q(z)$, we have $B_i \subset 16 B_z$. 
Let $y\in M$ be the center of $B_z$. 
Since $d(q,y) \leq 2R$, we have $B_y(8R)\subset B_q(10R)$. Hence, we find
\begin{eqnarray}
    \sum_{i\in J_q(z)} \mu(B_i) &\leq& \mu(16 B_z) \leq  \left(\frac{b}{a}\right)^{2 + \frac{1}{c}} 16^{1 + \frac{1}{c} }\exp\left( \sqrt{\frac{K_\eps(y, 8R)}{c}} \frac{8R}{a} \right)  \mu(B_z) \leq C_1 \mu(B_z)\label{lemma-2-eq-2}.
\end{eqnarray}
Combining this with \eqref{lemma-2-eq-1-5}, we get
\begin{equation*}
    N(z)\frac{\mu(B_z)}{C_1} \leq C_1\mu(B_z).
\end{equation*}
Therefore, setting $N_q = C_1^2$, we have $N(z) \leq N_q$.
Since
\begin{equation}\label{modified-sobolev-eq-5}
    \|\varphi - \varphi_s\|_2^2\leq \sum_{i\in J_q}\left(2\int_{2B_i}|\varphi(x) - \varphi_{4B_i}|^2 + |\varphi_{4B_i} - \varphi_s(x)|^2\ \d\mu(x)\right),
\end{equation}
we estimate the right-hand side of \eqref{modified-sobolev-eq-5} to obtain the desired inequality \eqref{claim-2-desired-eq}.
Since $d(q,x_i)  \leq 2R$ for $i\in J_q$, we have $B_{x_i}(4s) \subset B_q(10R)$.
Combining this with the Neumann-Poincar\'{e} inequality \eqref{local-poincare-ineq}, we have 
\begin{eqnarray}\label{modified-sobolev-eq-6}
    \int_{4B_i} |\varphi - \varphi_{4B_i} |^2 \ \d \mu &\leq & 2^{n+3}\left(\frac{2b}{a}\right)^{\frac{1}{c}}\exp\left( \sqrt{\frac{K_\eps(x_i, 4s)}{c}}\frac{4s}{a} \right) (2s)^2 \int_{8B_i}|\nabla \varphi|^2 \ \d\mu\nonumber\\
     &\leq& C_2 s^2 \int_{8B_i}|\nabla \varphi|^2 \ \d\mu,
\end{eqnarray}
where 
\begin{equation*}
    C_2 = 2^{n+5}\left(\frac{2b}{a}\right)^{\frac{1}{c}}\exp\left( \sqrt{\frac{K_\eps(q, 10R)}{c}}\frac{4R}{a} \right) .
\end{equation*}
From the volume growth estimate \eqref{vol-comparison-in-lem-1} and $B_{x_i}(2R)\subset B_q(10R)$, for any $x\in 2B_i = B_{x_i}(s)$, we have
\begin{eqnarray*}
    V_{x_i}(s) &\leq& V_x(2s)\\
    &\leq& V_x(s)\left(\frac{b}{a}\right)^{2 + \frac{1}{c}}2^{1 + \frac{1}{c}} \exp\left( \sqrt{\frac{K_\eps(x_i,2R)}{c}} \frac{2s}{a} \right)\\
    &\leq& V_x(s)\left(\frac{b}{a}\right)^{2 + \frac{1}{c}}2^{1 + \frac{1}{c}} \exp\left( \sqrt{\frac{K_\eps(q,10R)}{c}} \frac{2s}{a} \right).
\end{eqnarray*}
%where we used $B_{x_i}(2R)\subset B_q(10R)$.
Hence, we obtain
\begin{eqnarray}
    \int_{2B_i}\left|\varphi_{4B_i} - \varphi_s(x)\right|^2 \ \d\mu(x) %&=& \int_{2B_i}\left|\int \chi_s(x,z) \d\mu(z)f_{4B_i}(x) - \int \chi_s(x,z)f(z)\d\mu(z)\right|^2\d\mu(x)\nonumber\\
    &=& \int_{2B_i}\left|\int_{B_x(s)}\frac{1}{V_x(s)}(\varphi_{4B_i} - \varphi(z))\ \d\mu(z)\right|^2\d\mu(x)\nonumber\\
    &\leq & \int_{2B_i}\ \d \mu(x)\int_{B_x(s)} \frac{1}{V_x(s)} \left|\varphi_{4B_i} - \varphi(z) \right|^2\ \d\mu(z) \nonumber\\
    &\leq&\frac{1}{V_{x_i}(s)}\left(\frac{b}{a}\right)^{2 + \frac{1}{c}}2^{\frac{1}{c}+ 1}\exp\left(\sqrt{\frac{K_\eps(q,10R)}{c}}\frac{2s}{a}\right) \int_{2B_i}\ \d\mu(x)\int_{4B_i}|\varphi_{4B_i} - \varphi(z)|^2\ \d\mu(z)\nonumber\\
    &\leq & \left(\frac{b}{a}\right)^{2 + \frac{1}{c}}2^{\frac{1}{c}+ 1}\exp\left(\sqrt{\frac{K_\eps(q,10R)}{c}}\frac{2R}{a}\right)C_2s^2\int_{8B_i}|\nabla \varphi|^2\ \d\mu,\label{modified-sobolev-eq-7}
\end{eqnarray}
where we used $B_{x}(s)\subset 4B_i$ for $x\in 2B_i$ in the second inequality and \eqref{modified-sobolev-eq-6} in the last inequality.
Combining \eqref{modified-sobolev-eq-5}, \eqref{modified-sobolev-eq-6} and \eqref{modified-sobolev-eq-7}, we get
\begin{equation*}
    \|\varphi - \varphi_s\|^2_2\leq C_{3}s^2\sum_{i\in J_q}\int_{8B_i}|\nabla \varphi|^2 \ \d\mu \leq C_{3}N_qs^2\|\nabla \varphi\|^2_2,
\end{equation*}
where
\begin{equation*}
    C_3 = 4\left(\frac{b}{a}\right)^{2 + \frac{1}{c}}2^{\frac{1}{c}+ 1}\exp\left(\sqrt{\frac{K_\eps(q,10R)}{c}}\frac{2R}{a}\right)C_2 \geq 2\left\{\left(\frac{b}{a}\right)^{2 + \frac{1}{c}}2^{\frac{1}{c}+ 1}\exp\left(\sqrt{\frac{K_\eps(q,10R)}{c}}\frac{2R}{a}\right)C_2 + C_2\right\}.
\end{equation*}
Hence, we obtain
\begin{equation*}
    \|\varphi - \varphi_s\|_2 \leq \sqrt{N_q C_3} s \|\nabla \varphi\|_2.
\end{equation*}
Since
\begin{equation*}
    \sqrt{N_q C_3} = 2^{8 + \frac{5}{c} + \frac{n}{2}}\left(\frac{b}{a}\right)^{3 + \frac{2}{c}}\exp\left(\sqrt{\frac{K_\eps(q,10R)}{c}}\frac{11R}{a}\right),
\end{equation*}
we have the desired inequality \eqref{claim-2-desired-eq}.
\end{proof}

%Combining these two Claims, we obtain \eqref{modified-local-sobolev} by the same argument we used in \cite[Theorem 8]{yasuaki} to deduce the local Sobolev inequality (see also Remark \ref{local-sobolev-remark}).
\underline{\textit{Sketch of proof of Theorem \ref{sobolev_thm_revisited}}}

We apply the argument in \cite[Theorem 8]{yasuaki}.
%We fix $x\in \widetilde{M}$ and $R > 0$. 
For $0 < s \leq R$ and $\varphi\in C_0^{\infty}(B_q(R))$, we have
\begin{equation*}
\|\varphi\|_2\leq \|\varphi - \varphi_s\|_2 + \|\varphi_s\|_2.
\end{equation*}
From Lemma \ref{claim-1} and Lemma \ref{claim-2} and recalling $\nu \geq 1 + \frac{1}{c}$, we deduce
\begin{equation*}
\|\varphi\|_2\leq C_5 s\|\nabla \varphi\|_2 + C_4V_q(R)^{-\frac{1}{2}}\left(\frac{R}{s}\right)^{\frac{\nu}{2}}\|\varphi\|_1,
\end{equation*}
where we set
\begin{equation*}
    C_4 =  \left(\frac{b}{a}\right)^{3 + \frac{3}{2c}}8^{\frac{1}{2}\left(1 + \frac{1}{c}\right)}\exp\left(\sqrt{\frac{K_\eps(q,10R)}{c}}\frac{9R}{2a}\right)
\end{equation*}
and 
\begin{equation*}
    C_5 = 2^{8 + \frac{5}{c} + \frac{n}{2}}\left(\frac{b}{a}\right)^{3 + \frac{2}{c}}\exp\left(\sqrt{\frac{K_\eps(q,10R)}{c}}\frac{11R}{a}\right).
\end{equation*}
Hence, we obtain
\begin{equation}\label{robinson}
\|\varphi\|_2\leq 4C_5s\left(\|\nabla \varphi\|_2 + \frac{1}{R}\|\varphi\|_2\right) + C_4V_q(R)^{-\frac{1}{2}}\left(\frac{R}{s}\right)^{\frac{\nu}{2}}\|\varphi\|_1.
\end{equation}
Optimizing the right hand side of \eqref{robinson} over $s > 0$ as in \cite[Theorem 8]{yasuaki}, we have 
\begin{equation}\label{lemma-2-eq-3}
    \|\varphi\|_2 \leq \left\{4C_5C_6^{\frac{2}{2 + \nu}} + C_4C_6^{\frac{-\nu}{2 + \nu}}\right\}\left(\|\nabla \varphi\|_2 + \frac{1}{R}\|\varphi\|_2\right)^{\frac{\nu}{2 + \nu}}V_q(R)^{-\frac{1}{\nu + 2}}R^{\frac{\nu}{\nu + 2}}\|\varphi\|_1^{\frac{2}{2 + \nu}},
\end{equation}
where $C_6 = \frac{\nu}{2}\frac{C_4}{4C_5}$.
Since
\begin{equation*}
C_5C_6^{\frac{2}{2 + \nu}} = E_0\exp\left(\sqrt{\frac{K_\eps(q,10R)}{c}}\left(\frac{11R}{a} - \frac{13R}{2a}\frac{2}{2 + \nu}\right)\right)
\end{equation*}
and
\begin{equation*}
C_4C_6^{\frac{-\nu}{2 + \nu}} = E_1\exp\left(\sqrt{\frac{K_\eps(q,10R)}{c}}\left(\frac{9R}{2a} - \frac{13R}{2a}\frac{-\nu}{2 + \nu}\right)\right),
\end{equation*}
where $E_0, E_1$ are constants depending on $c,a,b,n$,
%Although we took $D,E_3$ such that
%\begin{equation*}
%\left\{4C_5C_7^{\frac{2}{2 + \nu}} + C_4C_7^{\frac{-\nu}{2 + \nu}}\right\}^{2 + \frac{4}{\nu}} < E_3\exp\left(D\left(1 + \sqrt{\frac{K}{c}}\right)\frac{r}{a}\right)
%\end{equation*}
%in \cite{yasuaki} to match the form of the local Sobolev inequality to those of the previous studies, 
 we see that there exist constants $D_2,E_2$ depending on $c,a,b,n$ such that 
\begin{equation}\label{lemma-2-eq-4}
    4C_5C_6^{\frac{2}{2 + \nu}} + C_4C_6^{\frac{-\nu}{2 + \nu}} < E_2\exp\left( D_2\sqrt{K_\eps(q,10R)}R\right).
\end{equation}
Since $\nu > 2$, by applying the same argument as in \cite[Theorem 8]{yasuaki} using \eqref{lemma-2-eq-3} and \eqref{lemma-2-eq-4}, we obtain the desired inequality (we also refer to \cite[Remark 1]{yasuaki}).
%Therefore, we see that under the assumptions of Theorem \ref{sobolev_thm}, there exist constants $D$ and $E$ depending on $c,a,b,n$ such that 
%    \begin{equation*}
%        \left(V_q(r)^{-1}\int_{B_q(r)}|f|^{\frac{2(1 + c)}{1-c}}\d\mu\right)^{\frac{1-c}{1 + c}}\leq E\exp\left(D\sqrt{K}r\right)r^2 V_q(r)^{-1}\int_{B_q(r)}(|\nabla f|^2 + r^{-2} f^2)\ \d\mu
%    \end{equation*}
%    for all $B_q(r)\subset M$ and $f\in C_0^{\infty}(B_q(r))$.}
\qed

We next show the following local Poincar\'{e} type inequality using the Laplacian comparison theorem. This is a generalization of Theorem \ref{yau-poincare-thm}.
\begin{lemma}(Local Poincar\'{e} inequality)\label{yau-poincare-lem}
    %Let $(M, g, \mu)$ be an $n$-dimensional complete weighted Riemannian manifold and $N \in (-\infty,1] \cup [n,+\infty]$,
    %$\ez \in \R$ in the $\ez$-range \eqref{epsilin-range} and $b \ge a>0$.
    %Assume that
    %\[ \Ric_\psi^N(v)\ge K \e^{\frac{4(\ez-1)}{n-1}\psi(x)} g(v,v)\]
    %holds for all $v \in T_xM \setminus 0$ and
    %\[ a \le \e^{-\frac{2(\ez-1)}{n-1}\psi} \le b. \]
     %Let $M$ be a complete Riemannian manifod with $\operatorname{Ric}(M) \geq-(n-1) K$. 
    Under the same assumption as Theorem \ref{modified-neumann-poincare-thm}, 
     for $p\geq 1$, there exist positive constants $C_7$ and $C_8$ depending only on $c,a,b,p$ such that 
     \begin{equation}\label{poincare}
        \int_{B_q(R)}|\varphi|^p \d \mu \leq C_7 R^p \exp\left({C_8 \sqrt{K_{\eps}(q,5R)} R}\right) \int_{B_q(R)}|\nabla \varphi|^p \ \d \mu
     \end{equation}
     for all $\varphi\in C_0^{\infty}(B_q(R))$.
\end{lemma}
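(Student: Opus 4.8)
The plan is to transplant the proof of Theorem~\ref{yau-poincare-thm} (\cite[Lemma~6.1 in Chapter~II]{yau-1}) to the weighted $\eps$-range setting, replacing the classical estimate by the weighted Laplacian comparison theorem (Theorem~\ref{laplacian_comparison_theorem}) and tracking the dependence of all constants on $c,a,b,p$. The first step is to specialize Theorem~\ref{laplacian_comparison_theorem}: by the definition of $K_\eps$ we have $\Ric_\psi^N(v)\ge -K_\eps(q,5R)\,\e^{\frac{4(\eps-1)}{n-1}\psi(x)}g(v,v)$ for all $x\in B_q(5R)$, $v\in T_xM$, while $a\le\e^{-\frac{2(\eps-1)}{n-1}\psi}\le b$ holds globally, so applying Theorem~\ref{laplacian_comparison_theorem} with $z=q$ and $K=-K_\eps(q,5R)\le 0$ (so that $\bs'_{cK}\ge0$ and the constant $\rho$ of Theorem~\ref{laplacian_comparison_theorem} equals $a$), together with $\coth u\le 1+u^{-1}$ for $u>0$, gives
\begin{equation*}
r(x)\,\Delta_\psi r(x)\ \le\ \frac{b}{ca}+\frac{r(x)}{a}\sqrt{\frac{K_\eps(q,5R)}{c}}\ \le\ \frac{b}{ca}+\frac{R}{a}\sqrt{\frac{K_\eps(q,5R)}{c}}
\end{equation*}
on $B_q(R)\setminus(\{q\}\cup\Cut(q))$, and in the barrier sense on $B_q(R)\setminus\{q\}$. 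Equivalently, the weighted area element $\mathcal A(t,\theta)$ in geodesic polar coordinates around $q$ obeys $\partial_t\log\mathcal A(t,\theta)\le \frac{b}{cat}+\frac1a\sqrt{K_\eps(q,5R)/c}$, so that $\mathcal A(t_2,\theta)/\mathcal A(t_1,\theta)\le (t_2/t_1)^{b/(ca)}\exp\big((t_2-t_1)a^{-1}\sqrt{K_\eps(q,5R)/c}\big)$ for $0<t_1\le t_2$ inside the segment domain --- the infinitesimal form of Theorem~\ref{bishop_gromov}.

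It suffices to treat $p=1$, since applying that case to $|\varphi|^p$ (Lipschitz, compactly supported in $B_q(R)$) and then using H\"older's inequality yields the case $p\ge1$ with constants of the stated form; so assume $\varphi\ge0$ and $p=1$. Following Yau, for a.e.\ $x\in B_q(R)$ I express $\varphi(x)$ as the integral of its derivative along a unit-speed minimal geodesic $\gamma_{x,y}$ from $x$ to a point $y$ at which $\varphi$ vanishes, taking $y$ in the thin shell $S=B_q(3R/2,2R)$ --- on which $\varphi\equiv0$ because $\supp\varphi\subset B_q(R)$, and to which every such geodesic from $B_q(R)$ stays inside $B_q(5R)$. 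Averaging over $y\in S$ and integrating over $x\in B_q(R)$ gives
\begin{equation*}
\int_{B_q(R)}\varphi\,\d\mu\ \le\ \frac{1}{\mu(S)}\int_{B_q(R)}\int_{S}\int_0^{d(x,y)}|\nabla\varphi(\gamma_{x,y}(s))|\,\d s\,\d\mu(y)\,\d\mu(x).
\end{equation*}
Substituting $s=t\,d(x,y)$ and changing variables through $\Phi_{x,t}\colon y\mapsto\gamma_{x,y}(t\,d(x,y))$, whose Jacobian is bounded below by $C(c,a,b)\exp(-C'\sqrt{K_\eps(q,5R)}R)$ exactly as in the proof of Theorem~\ref{modified-neumann-poincare-thm} and in \cite[Theorem~7]{yasuaki} (this is where \eqref{non_increasing_property} and the comparison above enter), bounds the right-hand side by $\tfrac{3R\,V_q(R)}{\mu(S)}\,C(c,a,b)^{-1}\exp(C'\sqrt{K_\eps(q,5R)}R)\int_{B_q(5R)}|\nabla\varphi|\,\d\mu$; and $\int_{B_q(5R)}|\nabla\varphi|\,\d\mu=\int_{B_q(R)}|\nabla\varphi|\,\d\mu$ by the support condition. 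Undoing the reduction to $p=1$ then gives the claim.

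The two points needing care both sit in the last displayed bound. First, one must control $V_q(R)/\mu(S)$ by $C(c,a,b)\exp(C''\sqrt{K_\eps(q,5R)}R)$: since $\diam M=\infty$ there is a ray from $q$, and placing a suitable small ball centered on this ray so that it lies inside $S$ while a fixed dilate of it still lies in $B_q(5R)$, Theorem~\ref{bishop_gromov} shows $\mu(S)=V_q(2R)-V_q(3R/2)$ is at least a controlled fraction of $V_q(R)$. Second, the passage to geodesic polar coordinates and through $\Cut(q)$ (and, after averaging in $y$, through the cut locus of the running point $x$) must be made rigorous --- routine but delicate. I expect this geometric bookkeeping, together with checking that every constant depends only on $c,a,b,p$, to be the main work; the latter succeeds because the $\eps$-range packs the dimension into the single parameter $c=\frac{1}{n-1}\big(1-\eps^2\frac{N-n}{N-1}\big)$, so the comparison above --- the only genuinely geometric ingredient --- carries no separate dependence on $n$.
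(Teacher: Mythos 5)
Your route is genuinely different from the paper's, and noticeably heavier. The paper proves the lemma with the classical Schoen--Yau test-function trick: fix $x_1\in\partial B_q(3R)$ (nonempty since $\diam M=\infty$), so that $2R\le\rho_1:=d(\cdot,x_1)\le 4R$ on $B_q(R)$; Theorem \ref{laplacian_comparison_theorem}, applied with center $x_1$ and $K=-K_\eps(q,5R)$ (localized to $B_q(5R)$, which is legitimate since only geodesics inside $B_q(5R)$ are involved), gives $\Delta_\psi\rho_1\le \frac{\sqrt{K_\eps(q,5R)}}{a\sqrt{c}}+\frac{b}{2caR}$ on $B_q(R)$, hence $\Delta_\psi \e^{-\alpha\rho_1}\ge\frac{\alpha^2}{2}\e^{-\alpha\rho_1}$ for $\alpha=2\bigl(\frac{\sqrt{K_\eps(q,5R)}}{a\sqrt{c}}+\frac{b}{2caR}\bigr)$; pairing this with $\varphi\ge 0$, integrating by parts, and using that $\e^{-\alpha\rho_1}$ oscillates on $B_q(R)$ only by the factor $\e^{2\alpha R}$ yields the $p=1$ case, and $p>1$ follows by exactly your reduction ($|\varphi|^p$ plus H\"older). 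So the paper needs only the Laplacian comparison for the distance to one exterior point: no Jacobian estimates, no shell $S$, no control of $V_q(R)/\mu(S)$, and the only cut-locus issue is the standard one for $\rho_1$. Your proposal instead recycles the segment/Jacobian machinery of Theorem \ref{modified-neumann-poincare-thm}; that can be pushed through (your ray-plus-Theorem \ref{bishop_gromov} control of $V_q(R)/\mu(S)$ is fine and stays inside $B_q(5R)$), but the polar area-element discussion in your first paragraph is not actually needed for it, and the bookkeeping you defer is precisely what the paper's argument avoids.

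One step of your sketch fails as literally written and must be repaired by the standard symmetrization. The Jacobian of $\Phi_{x,t}$ is \emph{not} bounded below uniformly in $t\in(0,1]$ (it degenerates as $t\to 0^+$), and the bound \eqref{modified-sobolev-eq-1} you invoke holds only for $t\in[1/2,1]$; it is moreover stated for $x,y\in B_q(R)$, whereas your $y$ ranges over $S\subset B_q(2R)$. You need to split the geodesic parameter: for $t\in[1/2,1]$ change variables in $y$ as you propose, using the analogue of \eqref{modified-sobolev-eq-1} for $x\in B_q(R)$, $y\in B_q(2R)$ (same proof via \eqref{non_increasing_property}; the geodesics stay in $B_q(4R)$, so the constant still involves only $K_\eps(q,5R)$, $c,a,b$ and $n$); for $t\in[0,1/2]$ write $\gamma_{x,y}(t\,d(x,y))=\gamma_{y,x}((1-t)\,d(x,y))$ and change variables in $x$ for each fixed $y\in S$, which produces a contribution of the form $CR\exp\bigl(C'\sqrt{K_\eps(q,5R)}\,R\bigr)\|\nabla\varphi\|_{1}$ carrying no $V_q(R)/\mu(S)$ factor at all. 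With that split your estimate closes, and the factors $2^{n}$ entering through the Jacobian bound are harmless for the stated dependence on $c,a,b,p$, since $n\le 1+1/c$ in the $\eps$-range.
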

\begin{proof}
    We denote $K = K_{\eps}(q,5R)$.
    We fix a point $x_1$ on $\partial B_q(3 R)$ and set $\rho_1(x)=d(x, x_1)$. By Theorem \ref{laplacian_comparison_theorem}, we have
    \begin{eqnarray*}
        \Delta_\psi\rho_1 &\leq& \frac{\sqrt{K}}{a\sqrt{c}}\coth\left(\frac{\sqrt{cK}}{b}\rho_1\right)\leq\frac{\sqrt{K}}{a\sqrt{c}} + \frac{b}{ac\rho_1}
    \end{eqnarray*}
    on $B_q(5R)$.
    %$$
    %\Delta \rho_1 \leq \frac{n-1}{\rho_1}+(n-1) \sqrt{K} \text {. }
    %$$
    For $x \in B_q(R)$, since $2 R \leq \rho_1(x) \leq 4 R$, we get
    $$
    \Delta_\psi \rho_1 \leq \frac{\sqrt{K}}{a\sqrt{c}} + \frac{b}{2caR} .
    $$
    Setting $\alpha = 2\left(\frac{\sqrt{K}}{a\sqrt{c}} + \frac{b}{2caR}\right)$, we have
    \begin{eqnarray}
    \Delta_\psi \e^{-\alpha \rho_1} =\e^{-\alpha \rho_1}\left(-\alpha \Delta_\psi \rho_1+\alpha^2\right) & \geq&  \frac{\alpha^2}{2} \e^{-\alpha \rho_1}\label{yau-eq-1}.
    \end{eqnarray}
    Hence, for any non-negative $\varphi \in C_0^{\infty}(B_q(R))$, we get
    \begin{equation*}
    \frac{\alpha^2}{2} \int_{B_q(R)} \varphi \e^{-\alpha \rho_1} \ \d \mu  \leq \alpha \int_{B_q(R)} \e^{-\alpha \rho_1} \nabla \varphi \cdot \nabla \rho_1 \ \d \mu  \leq \alpha \int_{B_q(R)} \e^{-\alpha \rho_1}|\nabla \varphi| \ \d \mu,
    \end{equation*}
    where we used \eqref{yau-eq-1} and the integration by parts in the first inequality.
    From $2 R \leq \rho_1(x) \leq 4 R$ for $x\in B_q(R)$, we deduce
    \begin{equation*}
        \frac{\alpha^2}{2}\int_{B_q(R)}\varphi\e^{- 4\alpha R}\ \d \mu \leq \alpha \int_{B_q(R)}\e^{-2\alpha R}|\nabla \varphi|\ \d\mu.
    \end{equation*}
    Since
    \begin{eqnarray*}
        \frac{2}{\alpha}\e^{2\alpha R} &=& \left(\frac{\sqrt{K}}{a\sqrt{c}} + \frac{b}{2caR}\right)^{-1}\exp\left(\frac{4\sqrt{K}}{a\sqrt{c}}R + \frac{2b}{ca}\right)\\
        &\leq&  \frac{2ca}{b}\exp\left( \frac{2b}{ca} \right)R \exp\left( \frac{4\sqrt{K}}{a\sqrt{c}}R \right),
    \end{eqnarray*}
    we obtain
    \begin{equation}\label{eq-1}
        \int_{B_q(R)}\varphi \ \d\mu \leq C_9 R\e^{C_{10}\sqrt{K}R} \int_{B_q(R)}|\nabla\varphi| \ \d\mu,
    \end{equation}
    where we set $C_9 = \frac{2ca}{b}\exp\left( \frac{2b}{ca} \right) $ and $C_{10} =\frac{4}{a\sqrt{c}}.$ 
    We remark that \eqref{eq-1} also holds for any $\varphi\in C_0^{\infty}(B_q(R))$ by replacing $\varphi$ with $|\varphi|$. Since $|\nabla| \varphi ||=| \nabla \varphi |$ almost everywhere, we have 
    \begin{equation}
        \int_{B_q(R)}|\varphi| \ \d\mu \leq C_9 R\e^{C_{10}\sqrt{K}R} \int_{B_q(R)}|\nabla\varphi|\  \d\mu,
    \end{equation}
    which proves the desired inequality for $p= 1$. For $p >  1$, applying \eqref{eq-1} to the function $|\varphi|^p$, we have
    \begin{eqnarray}
        \int_{B_q(R)}|\varphi|^p \ \d \mu &\leq& C_9 R \e^{C_{10} \sqrt{K} R} \int_{B_q(R)} p|\varphi|^{p-1}|\nabla \varphi|\  \d \mu\\
        &\leq& C_9 R \e^{C_{10} \sqrt{K} R} p \left( \int_{B_q(R)} |\varphi|^p \ \d \mu \right)^{\frac{p-1}{p}}\left(\int_{B_q(R)} |\nabla \varphi|^p\ \d \mu\right)^{\frac{1}{p}},
    \end{eqnarray}
    where we used the Hölder's inequality in the second inequality. Therefore, we obtain
    \begin{eqnarray*}
        \int_{B_q(R)} |\varphi|^p \ \d \mu  \leq p^{p} C_9^p R^p \e^{C_{10} p\sqrt{K}R}\left( \int_{B_q(R)} |\nabla \varphi|^p \ \d\mu \right),
    \end{eqnarray*}
    which yields the desired inequality for $p>1$.
\end{proof}

We finally present the following modified local Sobolev inequality, which is a key ingredient in proving the mean value inequality by the Moser's iteration argument.
\begin{theorem}(Modified local Sobolev inequality)\label{sobolev-eps-range}
    %Let $(M,g,\mu)$ be an $n$-dimensional complete weighted Riemannian manifold and $N \in (-\infty,1] \cup [n,+\infty]$,
    %$\ez \in \R$ in the $\ez$-range \eqref{epsilin-range}, %$K > 0$ and
    % $b \ge a>0$.
    %Assume that
    %\[ \Ric_\psi^N(v)\ge -K\e^{\frac{4(\ez-1)}{n-1}\psi(x)} g(v,v) \]
    %holds for all $v \in T_xM \setminus 0$ and
    %\begin{equation}
    %a \le \e^{-\frac{2(\ez-1)}{n-1}\psi} \le b.
    %\end{equation}
    Under the same assumptions as Theorem \ref{modified-neumann-poincare-thm},
    there exist positive constants $\widetilde{D}_2, \widetilde{E}_2$ depending on $c,a,b,n$ such that 
    \begin{equation*}
        \left(\frac{1}{V_q(R)}\int_{B_q(R)}|\varphi|^{\frac{2\nu}{\nu-2}}\d\mu\right)^{\frac{\nu-2}{\nu}}\leq \widetilde{E}_2 \exp\left(\widetilde{D}_2\sqrt{K_{\eps}(q,10R)} R\right)\frac{R^2}{V_q(R)}\int_{B_q(R)} |\nabla \varphi|^2\ \d\mu
    \end{equation*}
    for any $\varphi\in C_0^{\infty}(B_q(R))$ and $\nu > 2$ given in \eqref{def-of-nu}.
\end{theorem}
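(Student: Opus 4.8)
The plan is to deduce the modified local Sobolev inequality from the local Sobolev inequality (Theorem \ref{sobolev_thm_revisited}) by absorbing the zeroth-order term $R^{-2}\varphi^2$ on its right-hand side into the gradient term. The tool for this is the local Poincar\'{e} inequality (Lemma \ref{yau-poincare-lem}), applied with $p=2$; crucially, both of these results hold under exactly the standing hypotheses of Theorem \ref{modified-neumann-poincare-thm}, so no new assumptions enter.

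First I would invoke Lemma \ref{yau-poincare-lem} with $p=2$ to get constants $C_7, C_8$, depending only on $c,a,b$, with
$$\int_{B_q(R)}|\varphi|^2\,\d\mu \le C_7 R^2 \exp\!\left(C_8\sqrt{K_\eps(q,5R)}\,R\right)\int_{B_q(R)}|\nabla\varphi|^2\,\d\mu$$
for every $\varphi\in C_0^{\infty}(B_q(R))$. Since $B_q(5R)\subset B_q(10R)$ one has $K_\eps(q,5R)\le K_\eps(q,10R)$, and therefore
$$\int_{B_q(R)}\bigl(|\nabla\varphi|^2 + R^{-2}\varphi^2\bigr)\,\d\mu \le \Bigl(1 + C_7\exp\!\left(C_8\sqrt{K_\eps(q,10R)}\,R\right)\Bigr)\int_{B_q(R)}|\nabla\varphi|^2\,\d\mu.$$

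Then I would substitute this into Theorem \ref{sobolev_thm_revisited} and merge the two curvature-dependent exponential factors. Writing $\kappa := \sqrt{K_\eps(q,10R)}\,R\ge 0$, the prefactor multiplying $R^2 V_q(R)^{-1}\int_{B_q(R)}|\nabla\varphi|^2\,\d\mu$ is at most $\widetilde E_1\,\e^{\widetilde D_1\kappa}(1 + C_7\,\e^{C_8\kappa}) \le \widetilde E_1(1+C_7)\,\e^{(\widetilde D_1+C_8)\kappa}$, where I used $1 + C_7\e^{C_8\kappa}\le(1+C_7)\e^{C_8\kappa}$ because $\kappa\ge 0$. Taking $\widetilde E_2 := \widetilde E_1(1+C_7)$ and $\widetilde D_2 := \widetilde D_1 + C_8$, both of which depend only on $c,a,b,n$, yields the claimed estimate; the exponent $2\nu/(\nu-2)$ is finite since $\nu>2$ by the definition \eqref{def-of-nu} of $\nu$.

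I do not expect any genuine obstacle: the analytic substance lies entirely in Theorem \ref{sobolev_thm_revisited} and Lemma \ref{yau-poincare-lem}, which are already in hand, and the remaining work is the elementary absorption argument together with the bookkeeping of the radius--curvature exponents. The only point that calls for (trivial) care is the inclusion $B_q(5R)\subset B_q(10R)$ and the consequent monotonicity of $K_\eps(q,\cdot)$, which is what lets the two exponential factors be combined into the single factor $\exp(\widetilde D_2\sqrt{K_\eps(q,10R)}\,R)$ appearing in the statement.
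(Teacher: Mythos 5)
Your proposal is correct and follows essentially the same route as the paper: apply Theorem \ref{sobolev_thm_revisited} and then absorb the $R^{-2}\varphi^2$ term via Lemma \ref{yau-poincare-lem} with $p=2$, merging the exponential factors (using $K_\eps(q,5R)\le K_\eps(q,10R)$) into the constants $\widetilde{D}_2,\widetilde{E}_2$. No gaps.
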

\begin{proof}
    This is proved by combining the local Sobolev inequality and the local Poincar\'{e} inequality.
    Indeed, there exist constants $\widetilde{D}_1$ and $\widetilde{E}_1$ satisfying \eqref{modified-local-sobolev} and constants $C_7$ and $C_8$ satisfying \eqref{poincare} for $p = 2$. Therefore, for arbitrary $\varphi\in C_0^{\infty}(B_q(R))$, we obtain
    \begin{eqnarray*}
        &&\left(\frac{1}{V_q(R)}\int_{B_q(R)}|\varphi|^{\frac{2\nu}{\nu-2}}\d\mu\right)^{\frac{\nu-2}{\nu}}\\
        &&\quad \leq \widetilde{E}_1\exp \left(\widetilde{D}_1\sqrt{K_{\eps}(q,10R)}R\right)\frac{R^2}{V_q(R)}\int_{B_q(R)}(|\nabla \varphi|^2 + R^{-2}\varphi^2)\ \d\mu\\
        &&\quad\leq \widetilde{E}_1\exp\left(\widetilde{D}_1\sqrt{K_{\eps}(q,10R)}R\right)\frac{R^2}{V_q(R)}\left\{\int_{B_q(R)} |\nabla \varphi|^2 \ \d \mu + \frac{1}{R^2}\left(C_7 R^2 \exp\left(C_8 \sqrt{K_{\eps}(q,5R)} R\right)\int_{B_q(R)} |\nabla \varphi|^2\  \d \mu\right)\right\}\\
        &&\quad= \widetilde{E}_1\exp\left(\widetilde{D}_1\sqrt{K_{\eps}(q,10R)}R\right)\left\{1 + C_7 \exp\left(C_8 \sqrt{K_{\eps}(q,5R)} R\right)\right\}\frac{R^2}{V_q(R)}\int_{B_q(R)} |\nabla \varphi|^2\  \d \mu\\
        &&\quad= \widetilde{E}_2 \exp\left(\widetilde{D}_2\sqrt{K_{\eps}(q,10R)}R\right)\frac{R^2}{V_q(R)}\int_{B_q(R)} |\nabla \varphi|^2\ \d\mu,
    \end{eqnarray*}
    where $\widetilde{D}_2, \widetilde{E}_2$ are constants depending on $c,a,b,n$.
\end{proof}

\subsection{Mean value inequality}
Next, we establish a mean value inequality. Using this mean value inequality, we prove $L^p$-Liouville type theorems in Section 4 and a Liouville theorem for harmonic functions of sublinear growth in Section 5.
\begin{theorem} (Mean  value inequality)\label{mean-eq-thm}
    %Let $(M, g, \mu)$ be an $n$-dimensional complete weighted Riemannian manifold with $n \geq 3$ and $N \in (-\infty,1] \cup [n,+\infty]$,
    %$\ez \in \R$ in the $\ez$-range \eqref{epsilin-range} and $b \ge a>0$.
    %Assume that
    %\[ \Ric_\psi^N(v)\ge K \e^{\frac{4(\ez-1)}{n-1}\psi(x)} g(v,v)\]
    %holds for all $v \in T_xM \setminus 0$ and
    %\[ a \le \e^{-\frac{2(\ez-1)}{n-1}\psi} \le b. \]
    Under the same assumption as Theorem \ref{modified-neumann-poincare-thm},
    let $u$ be a non-negative smooth function satisfying
    \begin{equation*}
        \Delta_\psi u \geq -Au,
    \end{equation*}
    where $A$ is a positive constant.
    Then, for $p > 0$, there exist positive constants $\widetilde{D}_3, \widetilde{E}_3$ depending on $p,c,a,b,n$ such that 
    \begin{equation}\label{mean-value-inequality-k}
        \|u\|_{{\infty},\theta R} \leq \widetilde{E}_3\left\{\exp\left(\widetilde{D}_3 \sqrt{K_{\eps}(q,10R)} R\right) \left(AR^2 + \frac{16}{(1-\theta)^2}\right)\right\}^{\frac{\nu}{2p}} \left(\frac{1}{V_q(\theta R)} \int_{B_q(R)}u^p \d \mu \right)^{\frac{1}{p}}
    \end{equation}
    for all $\theta\in (0,1)$ and $\nu > 2$ in \eqref{def-of-nu}.
\end{theorem}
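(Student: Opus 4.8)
The plan is to run Moser's iteration in the standard way, feeding the modified local Sobolev inequality (Theorem \ref{sobolev-eps-range}) into the iteration engine of Theorem \ref{sub-lemma}. First I would reduce the weighted problem to an unweighted one: since $\Delta_\psi u = \Delta u - \langle\nabla\psi,\nabla u\rangle$, the inequality $\Delta_\psi u \geq -Au$ does not have the form required by Theorem \ref{sub-lemma}, so I would instead work directly with the weighted Dirichlet energy $\int|\nabla u|^2\,\d\mu$. Testing the differential inequality $\Delta_\psi u \geq -Au$ against $\phi^2 u^{\beta}$ for a cutoff $\phi \in C_0^\infty(B_q(R))$ and $\beta \geq 1$, and integrating by parts with respect to $\mu$ (which is legitimate since the weighted Laplacian is self-adjoint in $L^2(\mu)$), yields the Caccioppoli-type estimate
\begin{equation*}
    \int \phi^2 u^{\beta-1}|\nabla u|^2\,\d\mu \leq C\left(\frac{1}{\beta}\int u^{\beta+1}|\nabla\phi|^2\,\d\mu + \frac{A}{\beta}\int \phi^2 u^{\beta+1}\,\d\mu\right),
\end{equation*}
which rewritten for $w = u^{(\beta+1)/2}$ gives control of $\int|\nabla(\phi w)|^2\,\d\mu$ by $\int(|\nabla\phi|^2 + A\phi^2)w^2\,\d\mu$.

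Next I would apply Theorem \ref{sobolev-eps-range} to $\phi w$ to get the reverse-H\"older (gain of integrability) inequality: setting $\chi = \nu/(\nu-2) > 1$,
\begin{equation*}
    \left(\frac{1}{V_q(R)}\int_{B_q(R)}(\phi w)^{2\chi}\,\d\mu\right)^{1/\chi} \leq \widetilde{E}_2\exp(\widetilde{D}_2\sqrt{K_\eps(q,10R)}R)\frac{R^2}{V_q(R)}\int_{B_q(R)}|\nabla(\phi w)|^2\,\d\mu.
\end{equation*}
Combining the two displays and choosing, for the $j$-th iteration step, radii $R_j = \theta R + (1-\theta)R/2^j$ and cutoffs $\phi_j$ supported in $B_q(R_j)$, equal to $1$ on $B_q(R_{j+1})$, with $|\nabla\phi_j| \leq 2^{j+1}/((1-\theta)R)$, I obtain the recursion
\begin{equation*}
    \|u\|_{p\chi^{j+1},R_{j+1}}^{p\chi^j} \leq \left(C\, b_j\,(AR^2 + 16(1-\theta)^{-2}4^j)\right)^{\chi^j}\|u\|_{p\chi^j,R_j}^{p\chi^j}
\end{equation*}
(with the appropriate volume normalizations $V_q(R)$, and $b_j = \widetilde{E}_2\exp(\widetilde{D}_2\sqrt{K_\eps}R)$), where the exponent $\beta_j + 1 = p\chi^j$ starts at $p\chi^0 = p$. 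The case $0 < p < 2$ (where $\beta = p-1$ may be negative) is handled as usual by first running the iteration from exponent $p$ directly — the Caccioppoli step only requires $\beta + 1 = p > 0$, with $u$ replaced by $u + \eps$ and $\eps \to 0$ to avoid dividing by small $u$ — so no separate $L^p \to L^2$ bootstrapping is needed, exactly as in \cite[Lemma 11.1]{p_li}.

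Iterating over $j \geq 0$ and passing to the limit gives $\|u\|_{\infty,\theta R}$ bounded by $\prod_j(C b_j(AR^2 + 16(1-\theta)^{-2}4^j))^{\chi^{-j}}$ times $(V_q(\theta R)^{-1}\int_{B_q(R)}u^p\,\d\mu)^{1/p}$; the product converges because $\sum_j \chi^{-j} < \infty$ and $\sum_j j\chi^{-j} < \infty$, and it evaluates (up to constants depending only on $p, c, a, b, n$ through $\chi, \widetilde{E}_2, \widetilde{D}_2$) to the claimed form $\widetilde{E}_3\{\exp(\widetilde{D}_3\sqrt{K_\eps(q,10R)}R)(AR^2 + 16(1-\theta)^{-2})\}^{\nu/(2p)}$, since $\sum_j \chi^{-j} = \chi/(\chi-1) = \nu/2$ and summing $\exp$-factors with a geometric weight just rescales $\widetilde{D}_2$. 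The main obstacle — really the only nonroutine point — is bookkeeping the constants so that the $\sqrt{K_\eps(q,10R)}R$-dependence stays inside a single exponential with a dimensional constant in front (rather than accumulating $j$-dependent exponentials), and making sure the volume ratios $V_q(R_j)/V_q(R_{j+1})$ that appear when switching between normalized integrals are absorbed; here one uses the Bishop–Gromov estimate (Theorem \ref{bishop_gromov}), which contributes another factor of the same exponential type, so it merges harmlessly into $\widetilde{D}_3$. Everything else is the verbatim Moser scheme of \cite[Lemma 11.1]{p_li} transcribed to the weighted measure $\mu$.
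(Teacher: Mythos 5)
Your outline for exponents $p\geq 2$ coincides with the paper's proof: the Caccioppoli estimate obtained by testing $\Delta_\psi u\geq -Au$ against $\phi^2u^{2\gamma-1}$, combined with the modified local Sobolev inequality of Theorem \ref{sobolev-eps-range}, and iterated along shrinking radii with the constants tracked so that the $\sqrt{K_\eps(q,10R)}\,R$-dependence stays in a single exponential. (A minor cosmetic difference: the paper keeps the normalization $V_q(R)^{-1}$ and the Sobolev inequality on the fixed ball $B_q(R)$ throughout, so no Bishop--Gromov absorption of $V_q(R_j)/V_q(R_{j+1})$ is needed; that point in your write-up is harmless.)

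The genuine gap is your treatment of $0<p<2$. You claim the iteration can be started directly at exponent $p$ because ``the Caccioppoli step only requires $\beta+1=p>0$,'' with $u+\eps$ replacing $u$. This is false for $p\le 1$: testing $\Delta_\psi (u+\eps)\ge -A(u+\eps)$ against $\phi^2(u+\eps)^{\beta}$ with $\beta=p-1$ produces the gradient term $\beta\int\phi^2(u+\eps)^{\beta-1}|\nabla u|^2\,\d\mu$, whose coefficient $\beta$ is $\le 0$ when $p\le 1$, so it sits on the wrong side of the inequality and gives no control of $\int|\nabla(\phi\,(u+\eps)^{p/2})|^2\,\d\mu$; adding $\eps$ only cures division by small $u$, not this sign problem. (For $1<p<2$ the step does work, with constants blowing up like $(p-1)^{-1}$, which is still admissible; but your claim as stated covers all $p>0$.) This is also not what \cite[Lemma 11.1]{p_li} does: there, as in the paper's proof, small exponents are handled by a separate two-step argument --- first establish the $p=2$ mean value inequality on all balls $B_q(\rho)$ with $\theta R\le\rho\le R$, then use the interpolation $\|u\|_{2,\rho}\le\|u\|_{p,\rho}^{p/2}\|u\|_{\infty,\rho}^{1-p/2}$ and iterate over an increasing sequence of radii $\rho_i\uparrow R$ (with $\eta_i\rho_i=\rho_{i-1}$) so that the factor $\|u\|_{\infty,\rho_i}^{(1-p/2)^i}$ disappears in the limit. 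That bootstrapping step, which you explicitly declare unnecessary, is exactly what makes the theorem true for $p\in(0,1]$, and your proposal is incomplete without it.
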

\begin{proof}
    By Theorem \ref{sobolev-eps-range}, we have the following modified local Sobolev type inequality:
    \begin{equation}\label{spbolev-ingredient}
        \frac{C_s(R)}{R^2}\left( \frac{1}{V_q(R)}\int_{B_q(R)} |\varphi|^{2\beta}\d\mu \right)^{\frac{1}{\beta}} \leq \frac{1}{V_q(R)}\int_{B_q(R)}|\nabla\varphi|^2\d\mu
    \end{equation}
    for any $\varphi\in C_0^{\infty}(B_q(R))$
    where we set $\beta = \frac{\nu}{\nu-2}$ and $C_s(R) = \left\{\widetilde{E}_2\exp\left(\widetilde{D}_2\sqrt{K_{\eps}(q,10R)}R\right)\right\}^{-1}$.
    %From Sobolev inequality \eqref{spbolev-ingredient}, we deduce the desired inequality using the Moser's iteration arguments.
    We fix an arbitrary constant $\gamma \geq 1$.
    For arbitrary $\phi\in C_0^{\infty}(B_q(R))$, we obtain
    \begin{eqnarray}
        &&\int_{B_q(R)}\left|\nabla\left(\phi u^\gamma\right)\right|^2\  \d\mu\\
        &=&\int_{B_q(R)}|\nabla \phi|^2 u^{2 \gamma} \ \d\mu+2 \gamma \int_{B_q(R)} \phi u^{2 \gamma-1}\langle\nabla \phi, \nabla u\rangle \ \d\mu+\gamma^2 \int_{B_q(R)} \phi^2 u^{2 \gamma-2}|\nabla u|^2\ \d\mu\nonumber\\
        &\leq& \int_{B_q(R)}|\nabla \phi|^2 u^{2 \gamma} \ \d\mu+ \gamma \left(2\int_{B_q(R)} \phi u^{2 \gamma-1}\langle\nabla \phi, \nabla u\rangle\  \d\mu+ (2\gamma-1) \int_{B_q(R)} \phi^2 u^{2 \gamma-2}|\nabla u|^2\ \d\mu\right)\nonumber\\
        &= & \int_{B_q(R)} |\nabla \phi|^2 u^{2\gamma} \ \d \mu - \gamma \int_{B_q(R)} \phi^2 u^{2\gamma -1}\Delta_\psi u\  \d \mu\nonumber\\
        &\leq & \int_{B_q(R)} |\nabla \phi|^2 u^{2\gamma} \ \d\mu +\gamma \int_{B_q(R)} \phi^2 A u^{2\gamma} \ \d\mu\label{the-identity},
    \end{eqnarray}
    where we used the integration by parts in the third line and $\Delta_\psi u \geq -A u$ in the last line.
    Combining \eqref{the-identity} with the modified local Sobolev inequality \eqref{spbolev-ingredient}, we get
    \begin{eqnarray}
        \frac{1}{V_q(R)}\left(\gamma\int_{B_q(R)} \phi^2 A u^{2\gamma} \ \d \mu +   \int_{B_q(R)}|\nabla \phi|^2 u^{2 \gamma} \ \d\mu\right) %& \geq& \frac{1}{V_q(R)}\int_{B_q(R)}\left|\nabla\left(\phi u^\gamma\right)\right|^2 \ \d\mu \nonumber\\
        & \geq &\frac{C_s(R)}{R^2}\left(\frac{1}{V_q(R)}\int_{B_q(R)}\left(\phi^2 u^{2 \gamma}\right)^\beta \d\mu\right)^{\frac{1}{\beta}}\label{mean-eq-1}.
    \end{eqnarray}
    Let us choose $\phi\in C_0^{\infty}(B_q(R))$ so that $0\leq \phi \leq 1$, $\phi = 1$ on $ B_q(\rho)$, $\phi = 0$ on $B_q(R) \setminus B_q(\rho+\sigma)$ and $|\nabla\phi|\leq \frac{2}{\sigma}$ on $B_q(\rho+\sigma) \setminus B_q(\rho) $.
    From \eqref{mean-eq-1}, we deduce
    $$
    \begin{aligned}
    \left(\frac{1}{V_q(R)}\int_{B_q(\rho)} u^{2 \gamma \beta} \ \d\mu\right)^{\frac{1}{\beta}} & \leq\left(\frac{1}{V_q(R)}\int_{B_q(R)}\left(\phi^2 u^{2 \gamma}\right)^\beta\  \d\mu\right)^{\frac{1}{\beta}} \\
    & \leq \frac{R^2}{C_s(R)V_q(R)}\left\{ \gamma A\int_{B_q(\rho +\sigma)} u^{2\gamma} \ \d \mu +  \frac{4}{\sigma^2} \int_{B_q(\rho+\sigma)} u^{2 \gamma}\  \d\mu\right\}.
    \end{aligned}
    $$
    Hence, we have
    \begin{eqnarray}\label{iteration-base-1}
        \left(\frac{1}{V_q(R)}\int_{B_q(\rho)}u^{2\gamma \beta}\ \d\mu \right)^{\frac{1}{2\gamma\beta}}\leq  \left\{ \frac{R^2}{C_s(R)}\left(\gamma A + \frac{4}{\sigma^2} \right) \right\}^{\frac{1}{2\gamma}}  \left(\frac{1}{V_q(R)}\int_{B_q(\rho + \sigma)}u^{2\gamma} \ \d\mu \right)^{\frac{1}{2\gamma}}.
    \end{eqnarray}

    We first consider the case $p\geq 2$. We choose the following sequences of $a_i, \rho_i$, and $\sigma_i$ such that
    \begin{equation*}
        \gamma_i=\frac{p \beta^i}{2}, \quad \sigma_i=2^{-(1+i)}(1-\theta) R, \quad \rho_i=R-\sum_{j=0}^i \sigma_j > \theta R %\mbox{ and } \rho_{-1}=R
    \end{equation*}
    for $i \geq 0$.
    We note that, since $\beta > 1$ and $p\geq 2$, we have $\gamma_i\geq 1$. 
    Therefore, we can substitute $\gamma=\gamma_i$, $\rho=\rho_i$, and $\sigma=\sigma_i$ to \eqref{iteration-base-1}. Iterating this procedure, we obtain 
    \begin{equation}\label{will-be-estimated}
        \left(\frac{1}{V_q(R)}\int_{B_q(\rho_i)}u^{2\gamma_i\beta}\ \d \mu\right)^{\frac{1}{2\gamma_i\beta}}\leq \left[\prod_{j = 0}^i\left\{\frac{R^2}{C_s(R)}\left(\gamma_j A + \frac{4}{\sigma_j^2}\right)\right\}^{\frac{1}{2\gamma_j}}\right] \left(\frac{1}{V_q(R)}\int_{B_q(R)}u^{p} \ \d \mu\right)^{\frac{1}{p}}.
    \end{equation}
    We first estimate the left-hand side of \eqref{will-be-estimated}. We have the inequality
    \begin{equation}\label{sub-mean-eq-1}
        \lim_{i \rightarrow \infty} \left(\frac{1}{V_q(R)}\int_{B_q(\rho_i)}u^{2\gamma_i\beta}\ \d\mu\right)^{\frac{1}{2\gamma_i\beta}} \geq \lim_{i\rightarrow \infty}\left(\frac{1}{V_q(R)}\int_{B_q(\theta R)}u^{2\gamma_i\beta}\ \d\mu\right)^{\frac{1}{2\gamma_i\beta}} = \|u\|_{\infty,\theta R},
    \end{equation}
    where we used 
    \begin{equation*}
        %\lim_{i\rightarrow \infty} \left(\frac{1}{V_q(R)}\right)^{\frac{1}{2\gamma_i\beta}} = 1,\quad  \lim _{i \rightarrow \infty} \rho_i=\theta R, \quad 
        \lim_{i\rightarrow\infty}2\gamma_i\beta = \infty.
    \end{equation*}
    We next estimate the right-hand side of \eqref{will-be-estimated} as
    \begin{eqnarray}
        \prod_{j=0}^{\infty} \left(\gamma_j A + \frac{4}{\sigma_j^2 }\right)^{\frac{1}{2\gamma_j}} &=& \prod_{j=0}^\infty \left( \frac{p\beta^jA}{2} + \frac{4 \cdot4^{j+1}}{(1-\theta)^2R^2}\right)^{\frac{1}{2\gamma_j}}\nonumber\\
        &\leq& \prod_{j=0}^{\infty}\left(\frac{pA}{2} + \frac{16}{(1-\theta)^2R^2}\right)^{\frac{1}{2\gamma_j}}\max\{\beta,4\}^{\frac{j}{2\gamma_j}}\nonumber\\
        &\leq& \left(\frac{pA}{2} + \frac{16}{(1-\theta)^2R^2}\right)^{\frac{\beta}{p(\beta-1)}}\max\{\beta,4\}^{\frac{S_1(\beta)}{p}},\label{sub-mean-eq-2}
    \end{eqnarray}
    where we set $S_1(\beta) = \sum_{j=0}^{\infty} j \beta^{-j} < \infty$.
    Combining \eqref{sub-mean-eq-1} and \eqref{sub-mean-eq-2} with \eqref{will-be-estimated}, we obtain
    \begin{equation*}
    \|u\|_{\infty, \theta R} \leq \left(\frac{R^2}{C_s(R)}\right)^{\frac{\beta}{p(\beta-1)}}\left(\frac{pA}{2} + \frac{16}{(1-\theta)^2R^2}\right)^{\frac{\beta}{p(\beta-1)}}\max\{\beta,4\}^{\frac{S_1(\beta)}{p}}\left( \frac{1}{V_q(R)}\int_{B_q(R)}u^{p}\ \d\mu \right)^{\frac{1}{p}} .
\end{equation*}
Setting 
\begin{equation*}
    C_{11}(p,R) = \left(\frac{1}{C_s(R)}\right)^{\frac{\beta}{p(\beta-1)}}\max\{\beta, 4\}^{\frac{S_1(\beta)}{p}},
\end{equation*}
 we have
\begin{equation}\label{mean-eq-k}
    \|u\|_{\infty, \theta R} \leq C_{11}(p,R) \left(\frac{pAR^2}{2} + \frac{16}{(1-\theta)^2}\right)^{\frac{\beta}{p(\beta-1)}}\left( \frac{1}{V_q(R)}\int_{B_q(R)}u^{p}\ \d\mu \right)^{\frac{1}{p}} .
\end{equation}
This proves the desired inequality for $p \geq 2$. 
In the special case $p =2$, we have
\begin{equation}\label{mean-eq-k=2}
    \|u\|_{\infty, \theta R} \leq C_{11}(2,R) \left(AR^2 + \frac{16}{(1-\theta)^2}\right)^{\frac{\beta}{2(\beta-1)}}\left( \frac{1}{V_q(R)}\int_{B_q(R)}u^{2}\ \d\mu \right)^{\frac{1}{2}} .
\end{equation}

We next consider the case $p < 2$. 
    For any $\theta R \leq \rho \leq R$ and $\eta<1$, \eqref{mean-eq-k=2} yields
    \begin{eqnarray}
        \|u\|_{\infty, \eta \rho} & \leq& C_{11}(2,\rho)\left( A\rho^2 + \frac{16}{(1-\eta)^2} \right)^{\frac{\beta}{2(\beta-1)}} \|u\|_{2,\rho}V_q(\rho)^{-\frac{1}{2}}\nonumber\\
        &\leq &C_{11}(2,R)\left(AR^2 + \frac{16}{(1-\eta)^2}\right)^{\frac{\beta}{2(\beta-1)}}\|u\|_{2, \rho} V_q(\theta R)^{-\frac{1}{2}} \label{will-be-estimated-2}.
    \end{eqnarray}
    %where we set $C_4 = \left(\frac{1}{1-\theta^2}\right)^{\frac{\beta}{k(\beta-1)}}\left(\frac{16}{C_s}\right)^{\frac{\beta}{k(\beta-1)}}$.
    Since
    \begin{equation*}
        \left(\int_{B_q(\rho)}u^2\ \d \mu\right)^{\frac{1}{2}}  \leq \left( \int_{B_q(\rho)} u^p \|u\|_{{\infty},\rho}^{2-p} \ \d \mu \right)^{\frac{1}{2}} = \left(\int_{B_q(\rho)} u^p \ \d\mu \right)^{\frac{1}{2}}\|u\|_{{\infty},\rho}^{1-\frac{p}{2}},
    \end{equation*}
    we deduce from \eqref{will-be-estimated-2} that
    \begin{eqnarray}\label{mean-eq-4}
        \|u\|_{\infty, \eta \rho}& \leq& C_{11}(2,R)\left(AR^2 + \frac{16}{(1-\eta)^2}\right)^{\frac{\beta}{2(\beta-1)}}\|u\|_{p, \rho}^{\frac{p}{2}}\|u\|_{\infty, \rho}^{1-\frac{p}{2}} V_q(\theta R)^{-\frac{1}{2}}\label{mean-eq-3}.
    \end{eqnarray}
    Let us choose the sequences $\rho_i$, $\eta_i$ to be
    $$
    \rho_0=\theta R, \quad \rho_i=\theta R+(1-\theta) R \sum_{j=1}^i 2^{-j},\quad \eta_i \rho_i=\rho_{i-1} 
    $$
    for $i\geq 1.$ 
    Substituting $\rho = \rho_i$, $\eta = \eta_i$ to \eqref{mean-eq-4} and iterating this procedure, we obtain
    \begin{eqnarray}
        \|u\|_{\infty,\theta R} &=& \|u\|_{\infty,\rho_0}\nonumber\\
        &=& \|u\|_{\infty,\rho_1\eta_1}\nonumber\\
        &\leq& C_{11}(2,R)\left(AR^2 + \frac{16}{(1-\eta_1)^2}\right)^{\frac{\beta}{2(\beta-1)}}\|u\|_{p,\rho_1}^{\frac{p}{2}}\|u\|_{\infty,\rho_1}^{1-\frac{p}{2}}V_q(\theta R)^{-\frac{1}{2}}\nonumber\\
        &=& C_{11}(2,R)\left(AR^2 + \frac{16}{(1-\eta_1)^2}\right)^{\frac{\beta}{2(\beta-1)}}\|u\|_{p,\rho_1}^{\frac{p}{2}}\|u\|_{\infty,\rho_2 \eta_2}^{1-\frac{p}{2}}V_q(\theta R)^{-\frac{1}{2}}\nonumber\\
        &\leq& C_{11}(2,R)\left(AR^2 + \frac{16}{(1-\eta_1)^2}\right)^{\frac{\beta}{2(\beta-1)}}\|u\|_{p,\rho_1}^{\frac{p}{2}}V_q(\theta R)^{-\frac{1}{2}}\nonumber \\
        &&\qquad\quad \times\left\{ C_{11}(2,R)\left(AR^2 + \frac{16}{(1-\eta_2)^2}\right)^{\frac{\beta}{2(\beta-1)}}\|u\|^{\frac{p}{2}}_{p,\rho_2} V_q(\theta R)^{-\frac{1}{2}}\|u\|^{1-\frac{p}{2}}_{\infty,\rho_2} \right\}^{1-\frac{p}{2}}\nonumber\\
        &\leq & \|u\|_{\infty,\rho_i}^{\left( 1-\frac{p}{2} \right)^i} \prod_{j=1}^i \left\{ C_{11}(2,R)\left(AR^2 + \frac{16}{(1-\eta_j)^2}\right)^{\frac{\beta}{2(\beta-1)}}\|u\|^{\frac{p}{2}}_{p,\rho_j} V_q(\theta R)^{-\frac{1}{2}} \right\}^{\left(1-\frac{p}{2}\right)^{j-1}}\nonumber\\
        &\leq &  \|u\|_{\infty,R}^{\left( 1-\frac{p}{2} \right)^i} \prod_{j=1}^i \left\{ C_{11}(2,R) \left( AR^2 + \frac{16\cdot 2^{2j}}{(1-\theta)^2} \right)^{\frac{\beta}{2(\beta-1)}} \|u\|_{p,R}^{\frac{p}{2}}V_q(\theta R)^{-\frac{1}{2}} \right\}^{\left(1-\frac{p}{2}\right)^{j-1}},\label{iterated-eq}
    \end{eqnarray}
    where we used 
    \begin{equation*}
        \frac{1}{1-\eta_j} = \frac{\rho_j}{\rho_j - \rho_{j-1}} = \frac{\theta + (1-\theta)\sum_{i = 1}^j 2^{-i}}{(1-\theta) 2^{-j}} \leq \frac{2^j}{1-\theta}
    \end{equation*}
    in the last inequality.
    We next estimate the right-hand side of \eqref{iterated-eq}.
    We have
    \begin{eqnarray}
        \prod_{j=1}^{\infty}\left\{\left(AR^2 + \frac{16\cdot 2^{2j}}{(1-\theta)^2}\right)^{\frac{\beta}{2(\beta-1)}}\right\}^{\left(1-\frac{p}{2}\right)^{j-1}} &=& \prod_{j=1}^{\infty}\left[\left\{2^{2j}\left(\frac{AR^2}{2^{2j}} + \frac{16}{(1-\theta)^2}\right)\right\}^{\frac{\beta}{2(\beta-1)}}\right]^{\left(1-\frac{p}{2}\right)^{j-1}}\nonumber\\
        &\leq& \prod_{j=1}^{\infty}\left\{\left(AR^2 + \frac{16}{(1-\theta)^2}\right)^{\frac{\beta}{2(\beta-1)}}2^{j\nu/2}\right\}^{\left(1-\frac{p}{2}\right)^{j-1}}\nonumber\\
        &=& \left(AR^2 + \frac{16}{(1-\theta)^2}\right)^{\frac{\beta}{p(\beta-1)}}2^{\frac{\nu}{2}\sum_{j=1}^{\infty}j\left(1-\frac{p}{2}\right)^{j-1}}\nonumber\\
        &=& \left(AR^2 + \frac{16}{(1-\theta)^2}\right)^{\frac{\beta}{p(\beta-1)}}2^{S_2(p)}\label{estimate-1},
    \end{eqnarray}
    where we set $S_2(p) = \frac{\nu}{2}\sum_{j = 1}^{\infty}j\left(1-\frac{p}{2}\right)^{j-1} < \infty$, and 
    \begin{equation}
        %\lim_{i\rightarrow \infty} \rho_i = R, \quad 
        \lim_{i\rightarrow \infty} \|u\|_{\infty, R}^{\left(1-\frac{p}{2}\right)^i} = 1,\quad \lim_{i\rightarrow\infty} \prod_{j=1}^{i}\|u\|_{p, R}^{\frac{p}{2}\left(1-\frac{p}{2}\right)^{j-1}}=\|u\|_{p, R}.\label{estimate-2}
    \end{equation}
    Combining \eqref{estimate-1} and \eqref{estimate-2} with \eqref{iterated-eq}, we have
    $$
    \|u\|_{\infty, \theta R} \leq C_{11}(2,R)^{\frac{2}{p}}\left(AR^2 + \frac{16}{(1-\theta)^2}\right)^{\frac{\beta}{p(\beta-1)}}2^{S_2(p)}\|u\|_{p, R} V_q(\theta R)^{-\frac{1}{p}},
    $$
    which proves the desired inequality for $p < 2$.
    \end{proof}
%\begin{remark}\rm{
%    In the rest of the argument in this paper, we substitute $\theta = \frac{1}{2}.$}
%\end{remark}
\section{$L^p$-Liouville theorem}
The purpose of this section is to show the $L^p$-Liouville theorem for $N \in[n,\infty]$. 
In Theorem \ref{lp-liouvolle-eps-range}, we consider the case of $N \in (-\infty,1]\cup[n,\infty]$ and as a special case of Theorem \ref{lp-liouvolle-eps-range}, we show  the $L^p$- Liouville theorem for $N \in [n, \infty]$.
For Theorem \ref{lp-liouvolle-eps-range}, we give two different proofs, one of which does not use the maximum principle while the other proof uses that. 
Our argument provides a proof of Theorem \ref{wu-lp-liouville} different from that in \cite{jy-1} when it is restricted to the case $N = \infty$. 
In particular, our proof does not go through the elliptic Harnack inequality (see also Remark \ref{remark-elliptic-harnack}).
%in subsection \ref{lp-n-section} and we present an alternative proof of Theorem \ref{wu-lp-liouville} for $N = \infty$ in subsection \ref{lp-infty-section}.
%Hence, Theorem \ref{lp-liouvolle-eps-range} can be regarded as a generalization of Theorem \ref{wu-lp-liouville}.
%%The proof of Theorem \ref{lp-liouvolle-eps-range} is essentially different from the proof of Theorem \ref{wu-lp-liouville} in \cite{jy-1} since we do not use the elliptic Harnack inequality .
%In addition, although the first proof of Theorem \ref{lp-liouvolle-eps-range} in subsection \ref{lp-n-section} does not use the maximum principle, the alternative proof presented in the latter half of subsection \ref{lp-n-section} uses the maximum principle. This alternative proof can also be regarded as another different proof of Theorem \ref{wu-lp-liouville}.
%Thus, in this section, we give two different proofs of Theorem \ref{wu-lp-liouville}.

%The reason for considering the range of $N \in (-\infty,1]$ in Theorem \ref{lp-liouvolle-eps-range} is only to express the difficulty of showing the $L^p$-Liouville theorem for $N \in (-\infty,1]$.

\subsection{The case of $N \in [n,\infty)$}\label{lp-n-section}
We first note that the $L^p$-Liouville theorem for $p > 1$ is obtained without any assumptions on the weighted Ricci curvature as in the following theorem, which is a generalization of Theorem \ref{lp-liouville}.
Although this theorem follows from \cite[Theorem 1.1]{pigola} (we also refer to \cite[Theorem 1.4]{jy-1}), we prove it here for the sake of completeness.
\begin{theorem}\label{lp-liouville-1}
        Let $(M,g,\mu)$ be an $n$-dimensional complete weighted Riemannian manifold. 
        For $p > 1$, let $u$ be a smooth non-negative $L^p(\mu)$-function satisfying $\Delta_\psi u \geq 0$.
         Then $u$ is necessarily constant.
\end{theorem}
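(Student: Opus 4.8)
The plan is to exploit the mean value inequality (Theorem \ref{mean-eq-thm}) applied with $A=0$, combined with the $L^p$-integrability of $u$, to force $u$ to be constant. Since we make no curvature assumption here, I cannot invoke the volume comparison machinery directly; instead the idea is to observe that the weighted Ricci curvature $\Ric_\psi^N$ on \emph{any} ball $B_q(2R)$ is bounded below by some (possibly very negative) constant depending on $R$, so that on each fixed ball the hypotheses of Theorem \ref{mean-eq-thm} are met with a finite constant $K_\eps(q,20R)$. Concretely, fix $q\in M$; for each $R>0$ the quantity $K_\eps(q,20R)$ is finite since $B_q(20R)$ is relatively compact, and we may take $a,b$ adapted to $\sup_{B_q(20R)}|\psi|$. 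Then $\Delta_\psi u\ge 0 = -0\cdot u$, so Theorem \ref{mean-eq-thm} with $\theta=\tfrac12$ and any $p>1$ gives
\begin{equation*}
\|u\|_{\infty,R/2}\le \widetilde E_3\Bigl\{\exp\bigl(\widetilde D_3\sqrt{K_\eps(q,10R)}\,R\bigr)\cdot 64\Bigr\}^{\nu/(2p)}\Bigl(\frac{1}{V_q(R/2)}\int_{B_q(R)}u^p\,\d\mu\Bigr)^{1/p}.
\end{equation*}

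The key analytic step is then a Caccioppoli-type estimate showing that a nonnegative $L^p$ $\psi$-subharmonic function with $p>1$ has finite $\psi$-Dirichlet energy on all of $M$, and in fact $\nabla u\equiv 0$. First I would multiply $\Delta_\psi u\ge 0$ by $u^{p-1}\phi^2$ for a cutoff $\phi$ supported in $B_q(2R)$ with $\phi=1$ on $B_q(R)$ and $|\nabla\phi|\le 2/R$, integrate by parts against $\d\mu$, and use Young's inequality to absorb the cross term; this yields
\begin{equation*}
\int_{B_q(R)}u^{p-2}|\nabla u|^2\,\d\mu\le \frac{C(p)}{R^2}\int_{B_q(2R)}u^p\,\d\mu,
\end{equation*}
valid since $p>1$ makes the coefficient $(p-1)$ of the good term positive. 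Because $u\in L^p(\mu)$, the right-hand side tends to $0$ as $R\to\infty$, so $u^{p-2}|\nabla u|^2\equiv 0$ on $M$, hence $\nabla u\equiv 0$ on the open set $\{u>0\}$; by continuity $u$ is locally constant, and connectedness of $M$ gives that $u$ is constant. (Alternatively one can finish via the mean value inequality: letting $R\to\infty$ with $u\in L^p$ forces $\|u\|_{\infty,R/2}$ to be controlled, but the cleaner route to \emph{constancy} rather than mere boundedness is the energy estimate, so I would present that.)

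The main obstacle is handling the growth of the constant $\exp(\widetilde D_3\sqrt{K_\eps(q,10R)}R)$: without a curvature lower bound, $K_\eps(q,10R)$ may blow up as $R\to\infty$, so the mean value inequality alone does not immediately give a contradiction from $u\in L^p$. This is precisely why I would route the final argument through the Caccioppoli energy estimate, whose constant $C(p)/R^2$ involves \emph{no} curvature term — the integration by parts $\int \phi^2 u^{p-1}\Delta_\psi u\,\d\mu = -\int \nabla(\phi^2 u^{p-1})\cdot\nabla u\,\d\mu$ is curvature-free. The only subtlety there is justifying the integration by parts and the use of $u\in L^p$ to control $\int u^{p-1}|\nabla u||\nabla\phi|\phi\,\d\mu$ via Cauchy--Schwarz, namely $\int u^{p-2}|\nabla u|^2\phi^2 \cdot \int u^p|\nabla\phi|^2$, which requires knowing a priori that $u^{p-2}|\nabla u|^2$ is locally integrable; this follows from local elliptic regularity for the smooth subsolution $u$. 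Once that is in place the argument is routine.
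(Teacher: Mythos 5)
Your proposal is correct and its core argument is essentially the same as the paper's: the paper also multiplies $\Delta_\psi u\geq 0$ by $u^{p-1}\varphi^2$ for a cut-off $\varphi$ with $|\nabla\varphi|\leq 2/R$, integrates by parts, uses Cauchy--Schwarz to absorb the cross term, and lets $R\to\infty$ using $u\in L^p(\mu)$ to conclude $|\nabla u|=0$ (following Yau's argument, with no curvature input). Your opening detour through the mean value inequality is unnecessary, but you correctly recognize its constant is uncontrolled without curvature bounds and discard it in favor of the curvature-free Caccioppoli estimate, which is exactly the published route.
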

\begin{proof}
    We apply the argument in \cite[Theorem 6.3]{yau-1}.
    Let $\lambda = p-1$ and $\varphi$ be a cut-off function such that $0\leq \varphi \leq 1$, $\varphi = 1$ on $B_q(R)$, $\varphi = 0$ on $M \setminus B_q(2R)$ and $|\nabla \varphi|\leq \frac{2}{R}$.
    Since 
    \begin{equation*}
        0 \leq \int_{B_q(2R)} \varphi^2 u^{\lambda} \Delta_\psi u\  \d\mu = - \int_{B_q(2R)} \nabla u\cdot \nabla (\varphi^2 u^{\lambda})\  \d\mu,
    \end{equation*}
    we obtain
    \begin{eqnarray*}
        \lambda \int_{B_q(2R)}u^{\lambda-1}\varphi^2 |\nabla u|^2 \ \d \mu&\leq& 2\int_{B_q(2R)}u^{\lambda}\varphi |\nabla u\cdot \nabla \varphi|\ \d \mu\\
        &\leq& 2\left( \int_{B_q(2R)}|\nabla u|^2 u^{\lambda -1} \varphi^2 \ \d\mu \right)^{\frac{1}{2}}\left(\int_{B_q(2R)}|\nabla \varphi|^2 u^{\lambda + 1} \ \d \mu\right)^{\frac{1}{2}}.
    \end{eqnarray*}
    Therefore, we find
    \begin{eqnarray*}
        \lambda^2 \int_M u^{\lambda-1} \varphi^2 |\nabla u|^2 \ \d \mu \leq 4\int_{B_q(2R)}|\nabla \varphi|^2 u^{1 + \lambda}\ \d\mu\leq  \frac{16}{R^2}\int_M u^p \ \d \mu.
    \end{eqnarray*}
    Letting $R\rightarrow \infty$, we obtain $|\nabla u| = 0$ and the theorem follows.
\end{proof}

Before proving the $L^p$-Liouville theorem, we show the following relative volume comparison theorem. We recall that $V_x(r,R) = \mu(B_x(R)\setminus B_x(r))$.
\begin{theorem}(Relative volume comparison theorem)\label{rel-vol-comparison-thm}
    Let $(M,g,\mu)$ be an $n$-dimensional complete weighted Riemannian manifold and $N \in (-\infty,1] \cup [n,\infty]$,
    $\ez \in \R$ in the $\ez$-range \eqref{epsilin-range}, $K \geq 0$ and $b \ge a>0$.
    Assume that
    \[ \Ric_\psi^N(v)\ge - K\e^{\frac{4(\ez-1)}{n-1}\psi(x)} g(v,v) \]
    holds for all $v \in T_xM \setminus 0$ and
    \begin{equation*}
    a \le \e^{-\frac{2(\ez-1)}{n-1}\psi} \le b.
    \end{equation*}
    Then, we have 
    \begin{equation*}
        V_x(S) \geq \frac{a}{b}\frac{\int_0^{S/b}\bs_{-cK}(\tau)^{1/c} \ \d \tau}{\int_{r/b}^{R/a}\bs_{-cK}(\tau)^{1/c}\ \d \tau} V_x(r,R)
    \end{equation*}
    for arbitrary $R > r \geq 0$ and $S > 0$ satisfying $r \geq S $ and $x\in M$.
\end{theorem}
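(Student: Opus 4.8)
The plan is to reduce the statement, via the geodesic polar-coordinate decomposition of $\mu$ at the centre $x$, to a one-dimensional comparison along each geodesic issuing from $x$, and then to feed in the monotonicity property \eqref{non_increasing_property} exactly as in the proof of Theorem \ref{bishop_gromov}.

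First I would fix $x\in M$ and, for a unit vector $v\in U_xM$, adopt the notation of Section~2: let $\eta=\eta_v$ be the unit-speed geodesic with $\dot\eta(0)=v$ and let $c(v)\in(0,+\infty]$ be its cut time. In geodesic polar coordinates one has $\d\mu = \big(\e^{-\psi(\eta)}h_0^{n-1}\big)(t,v)\,\d t\,\d\theta(v)$ on the domain of injectivity, so, setting $\hat h(t,v):=\big(\e^{-\psi(\eta)}h_0^{n-1}\big)(t,v)$ for $t<c(v)$ and $\hat h(t,v):=0$ for $t\ge c(v)$,
\begin{equation*}
V_x(S)=\int_{U_xM}\int_0^{S}\hat h(t,v)\,\d t\,\d\theta(v),\qquad V_x(r,R)=\int_{U_xM}\int_r^{R}\hat h(t,v)\,\d t\,\d\theta(v).
\end{equation*}
Since we assume $\Ric_\psi^N(v)\ge -K\e^{\frac{4(\ez-1)}{n-1}\psi(x)}g(v,v)$, the property \eqref{non_increasing_property} with $K$ replaced by $-K$ says that $t\mapsto\hat h(t,v)/\bs_{-cK}(\varphi_\eta(t))^{1/c}$ is non-increasing on $[0,c(v))$; after the drop to $0$ at $c(v)$ it is non-increasing on all of $[0,+\infty)$. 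Note that $K\ge 0$ makes $\bs_{-cK}$ positive and increasing on $(0,+\infty)$, so no division causes trouble.

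Next I would change variables to $\tau=\varphi_\eta(t)$. From $\d t=\e^{-\frac{2(\ez-1)}{n-1}\psi(\eta(t))}\,\d\tau$, the hypothesis $a\le\e^{-\frac{2(\ez-1)}{n-1}\psi}\le b$, and the induced bounds $t/b\le\varphi_\eta(t)\le t/a$, one obtains for each $v$ — writing $G_v(\tau):=h_1(\tau,v)^{1/c}$, extended by $0$ past the cut value, so that $\hat h(t,v)=G_v(\varphi_\eta(t))$ and $G_v/\bs_{-cK}^{1/c}$ is non-increasing —
\begin{equation*}
\int_0^{S}\hat h(t,v)\,\d t\ \ge\ a\int_0^{S/b}G_v(\tau)\,\d\tau,\qquad \int_r^{R}\hat h(t,v)\,\d t\ \le\ b\int_{r/b}^{R/a}G_v(\tau)\,\d\tau,
\end{equation*}
using $G_v\ge0$ together with $\varphi_\eta(S)\ge S/b$, $\varphi_\eta(r)\ge r/b$ and $\varphi_\eta(R)\le R/a$. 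The one-dimensional core is the elementary lemma: if $f\ge0$ and $\phi>0$ with $f/\phi$ non-increasing, then $\big(\int_0^{s_1}f\big)\big(\int_{s_2}^{s_3}\phi\big)\ge\big(\int_{s_2}^{s_3}f\big)\big(\int_0^{s_1}\phi\big)$ for $0\le s_1\le s_2\le s_3$, which follows by integrating the pointwise inequality $f(t)\phi(t')\ge f(t')\phi(t)$ (valid for $t\le s_1\le s_2\le t'$) over the rectangle $[0,s_1]\times[s_2,s_3]$. I would apply it with $f=G_v$, $\phi=\bs_{-cK}^{1/c}$ and $s_1=S/b\le s_2=r/b\le s_3=R/a$ (correctly ordered since $S\le r$, $a\le b$ and $r<R$), combine it with the two displayed bounds, and finally integrate over $v\in U_xM$; this yields precisely the claimed inequality. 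If $\int_{s_2}^{s_3}G_v=0$ for some $v$, then $c(v)\le r$ and that direction contributes $0$ to $V_x(r,R)$, so it is harmless.

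I do not anticipate a genuine obstacle: this is the standard relative-volume-comparison scheme, and every input — the polar decomposition of $\mu$, the monotonicity \eqref{non_increasing_property}, and the reparametrisation bounds $t/b\le\varphi_\eta(t)\le t/a$ — is already supplied by \cite{lu} and used in the proof of Theorem \ref{bishop_gromov}. The only slightly delicate bookkeeping is the treatment of the cut locus and of directions with vanishing integrals, and that is routine.
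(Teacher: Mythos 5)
Your proposal is correct and follows essentially the same route as the paper: a polar-coordinate decomposition of $\mu$ at $x$, the reparametrisation $\tau=\varphi_\eta(t)$ with the bounds $t/b\le\varphi_\eta(t)\le t/a$ coming from $a\le\e^{-\frac{2(\ez-1)}{n-1}\psi}\le b$, and the monotonicity \eqref{non_increasing_property} fed into a Gromov-type integral comparison before integrating over $U_xM$. The only differences are cosmetic: you prove the one-dimensional lemma directly and handle the cut locus by extending the density by zero, whereas the paper cites Zhu's version of Gromov's lemma and truncates the integrals at the cut time $\rho(v)$, treating the cases $\rho(v)>R$, $r\le\rho(v)\le R$, $\rho(v)<r$ separately.
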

\begin{proof}
    %We set $0 < s < S$.
    For an arbitrary unit vector $v\in U_x M $, let $\eta(t) := \exp(t v)$ and $\rho(v) = \sup\{ t > 0 \ |\  d(x,\eta(t)) = t \}$. Setting $\varphi_\eta$ as in \eqref{varphi-def}, we have $\frac{1}{b}\leq \varphi_\eta' \leq \frac{1}{a}$.
    Hence, we obtain
    \begin{equation*}
        \int_{\varphi_\eta(\min\{r,\rho(v)\})}^{\varphi_\eta (\min\{R,\rho(v)\})} h_1(\tau)^{1/c} \ \d \tau = \int_{\min\{r,\rho(v)\}}^{\min\{R,\rho(v)\}}h(t)^{1/c}\varphi_\eta'(t)\  \d t \geq \frac{1}{b}\int_{\min\{r,\rho(v)\}}^{\min\{R,\rho(v)\}} h(t)^{1/c} \ \d t
    \end{equation*}
    and
    \begin{equation*}
        \int_{0}^{\varphi_\eta(\min\{S,\rho(v)\})} h_1(\tau)^{1/c}\  \d \tau \leq \frac{1}{a} \int_{0}^{\min\{S,\rho(v)\}}h(t)^{1/c}\  \d t.
    \end{equation*}
    Since $h_1(\tau)^{\frac{1}{c}}/ \bs_{-cK}(\tau)^{\frac{1}{c}}$ is non-increasing in $\tau$ as mentioned in \eqref{non_increasing_property} and $S\leq R$, we apply the Gromov's lemma (we refer to \cite[Lemma 3.2]{zhu}, for example) and then, we obtain
    \begin{eqnarray*}
        &&\int_{\min\{r,\rho(v)\}}^{\min\{R,\rho(v)\}} h(t)^{1/c}\ \d t \bigg/ \int_{0}^{\min\{S,\rho(v)\}} h(t)^{1/c} \ \d t\\
          &&\qquad\qquad \qquad \leq \frac{b}{a}  \int_{\varphi_\eta(\min\{r,\rho(v)\})}^{\varphi_\eta(\min\{R,\rho(v)\})} h_1(\tau)^{1/c}\ \d \tau \bigg/ \int_{0}^{\varphi_\eta(\min\{S,\rho(v)\})}h_1(\tau)^{1/c}\ \d \tau \\
        &&\qquad \qquad \qquad \leq \frac{b}{a} \int_{\varphi_\eta(\min\{r,\rho(v)\})}^{\varphi_\eta(\min\{R,\rho(v)\})}\bs_{-cK}(\tau)^{1/c}\ \d \tau \bigg/ \int_{0}^{\varphi_\eta(\min\{S,\rho(v)\})}\bs_{-cK}(\tau)^{1/c} \ \d \tau.
    \end{eqnarray*}
    Integrating the above inequality in $v\in U_xM$  with respect to the measure $\Xi$ induced from $g$, we get 
    \begin{eqnarray}
        V_x(r,R) &=& \int_{U_x M}\int_{\min\{r,\rho(v)\}}^{\min\{R,\rho(v)\}} h(t)^{1/c} \ \d t\  \d \Xi( v) \nonumber\\
        &\leq& \frac{b}{a} \int_{U_x M} \left(\int_{\varphi_\eta(\min\{r,\rho(v)\})}^{\varphi_\eta(\min\{R,\rho(v)\})}\bs_{-cK}(\tau)^{1/c}\ \d \tau \bigg/ \int_{0}^{\varphi_\eta(\min\{S,\rho(v)\})}\bs_{-cK}(\tau)^{1/c} \d \tau\right)  \int_0^{\min\{S,\rho(v)\}} h(t)^{1/c}\ \d t \ \d \Xi(v).\nonumber\\
        && \label{rel-vol-eq-1}
    \end{eqnarray}
    Next, we estimate
    \begin{equation*}
         E(v) := \int_{\varphi_\eta(\min\{r,\rho(v)\})}^{\varphi_\eta(\min\{R,\rho(v)\})}\bs_{-cK}(\tau)^{1/c}\ \d \tau \bigg/ \int_{0}^{\varphi_\eta(\min\{S,\rho(v)\})}\bs_{-cK}(\tau)^{1/c} \ \d \tau .
    \end{equation*}
    For $v\in U_xM$ such that $\rho(v) > R$, we have 
    \begin{equation}\label{rel-vol-comparison-eq-1}
        E(v) = \frac{\int_{\varphi_\eta(r)}^{\varphi_\eta(R)} \bs_{-cK}(\tau)^{1/c}\ \d\tau}{\int_{0}^{\varphi_\eta(S)} \bs_{-cK}(\tau)^{1/c}\ \d\tau} \leq \frac{\int_{r/b}^{R/a} \bs_{-cK}(\tau)^{1/c}\ \d\tau}{\int_{0}^{S/b} \bs_{-cK}(\tau)^{1/c}\ \d\tau},
    \end{equation}
    where we used 
    \begin{equation*}
        \frac{r}{b} \leq \varphi_\eta(r)\leq \varphi_\eta (R) \leq \frac{R}{a}.
    \end{equation*}
     For $v\in U_xM$ such that $r \leq \rho(v) \leq R $, we have 
    \begin{equation*}
        E(v) = \frac{\int_{\varphi_\eta(r)}^{\varphi_\eta(\rho(v))} \bs_{-cK}(\tau)^{1/c}\ \d\tau}{\int_{0}^{\varphi_\eta(S)} \bs_{-cK}(\tau)^{1/c}\ \d\tau} \leq \frac{\int_{r/b}^{R/a} \bs_{-cK}(\tau)^{1/c}\ \d\tau}{\int_{0}^{S/b} \bs_{-cK}(\tau)^{1/c}\ \d\tau}.
    \end{equation*}
    In other cases, i.e., for $v\in U_xM$ such that $\rho(v) < r$, we also have 
    \begin{equation*}
        E(v) = 0 \leq \frac{\int_{r/b}^{R/a} \bs_{-cK}(\tau)^{1/c}\ \d\tau}{\int_{0}^{S/b} \bs_{-cK}(\tau)^{1/c}\ \d\tau}.
    \end{equation*}
     Combining these estimates with \eqref{rel-vol-eq-1}, we have
     \begin{eqnarray*}
        V_x(r,R) &\leq& \frac{b}{a} \int_{U_x M} \left(\frac{\int_{r/b}^{R/a} \bs_{-cK}(\tau)^{1/c}\ \d\tau}{\int_{0}^{S/b} \bs_{-cK}(\tau)^{1/c}\ \d\tau}\right) \int_0^{\min\{S,\rho(v)\}} h(t)^{1/c}\ \d t \ \d \Xi(v)\nonumber\\
        &=& \frac{b}{a} \frac{\int_{r/b}^{R/a} \bs_{-cK}(\tau)^{1/c}\ \d\tau}{\int_{0}^{S/b} \bs_{-cK}(\tau)^{1/c}\ \d\tau}V_x(S).
     \end{eqnarray*}
     Hence, we obtain the desired inequality.
\end{proof}
\begin{remark}\label{lower-bound-d-remark}\rm{
    Although Theorem \ref{rel-vol-comparison-thm} only considered the case $S\leq r$, we consider the case $r < S \leq R$ in this remark for the proof of the next theorem.
    For $v\in U_x M$ such that $r < \rho(v) < S$, we can also estimate $E(v)$ from above. Indeed, we have
    \begin{equation*}
        E(v) \leq \frac{\int_{\varphi_\eta(r)}^{\varphi_\eta(\rho(v))} \bs_{-cK}(\tau)^{1/c}\ \d\tau}{\int_{0}^{\varphi_\eta(\rho(v))} \bs_{-cK}(\tau)^{1/c}\ \d\tau} \leq 1.
    \end{equation*}}
\end{remark}

Using the relative volume comparison in Theorem \ref{rel-vol-comparison-thm}, we show the following theorem,
which will be used to prove the $L^p$-Liouville theorem for $N \in [n,\infty]$.
\begin{theorem}\label{lp-liouvolle-eps-range}
    Let $(M,g,\mu)$ be an $n$-dimensional complete weighted Riemannian manifold and $N \in (-\infty,1] \cup [n,\infty]$,
    $\ez \in \R$ in the $\ez$-range \eqref{epsilin-range} and $b \ge a>0$. We assume 
    \begin{equation*}
    a \le \e^{-\frac{2(\ez-1)}{n-1}\psi} \le b
    \end{equation*}
    and 
    \begin{equation}\label{odd-assumption}
        \frac{b}{a}\left\{\left(\frac{b}{a}\right)^{1 + \frac{1}{c}} - 1\right\} < \frac{1}{20^{1 + \frac{1}{c}}}
    \end{equation}
    For $p\in (0,\infty)$,  let $u$ be a smooth non-negative $L^p(\mu)$ function satisfying $\Delta_\psi u \geq 0$.
    Then, there exists a constant $\delta > 0$ depending on $c,a,b$ such that the following property holds:

    If, for some $q\in M$, we have
    \begin{equation}\label{curvature-assume-lp}
        \Ric_\psi^N \ge -\delta \e^{\frac{4(\ez-1)}{n-1}\psi(x)}d(q,x)^{-2} 
    \end{equation}
    when $d(q,x)$ is sufficiently large, then $u$ is identically zero.
\end{theorem}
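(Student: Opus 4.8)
The plan is to run a Li--Schoen type mean value argument. We may assume $u\ge 0$, $\Delta_\psi u\ge 0$ and $\int_M u^p\,\d\mu<\infty$, and must show $u\equiv 0$. First I would read the hypothesis \eqref{curvature-assume-lp} as: there is $R_0>0$ with $K_\eps(x)\le\delta\, d(q,x)^{-2}$ whenever $d(q,x)\ge R_0$; in particular $K_\eps\to 0$ at infinity. Consequently, for any $y$ with $s:=d(q,y)\ge 2R_0$ one has $B_y(s/2)\subset\{d(q,\cdot)\ge s/2\}$ and hence $K_\eps(y,s/2)\le 4\delta s^{-2}$.

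I would then apply the mean value inequality (Theorem \ref{mean-eq-thm}, with the fixed point taken to be $y$ rather than $q$) with $R=s/20$, $\theta=\tfrac12$, letting $A\searrow 0$ --- legitimate since $\Delta_\psi u\ge 0\ge -Au$ for every $A>0$ and the constants $\widetilde D_3,\widetilde E_3$ in \eqref{mean-value-inequality-k} do not depend on $A$. Since $10R=s/2$, the exponential factor is bounded by $\exp(\widetilde D_3\sqrt\delta/10)$, independently of $y$, so
\[
u(y)\ \le\ \sup_{B_y(s/40)}u\ \le\ C_1\Bigl(\frac1{V_y(s/40)}\int_{B_y(s/20)}u^p\,\d\mu\Bigr)^{1/p},\qquad C_1=C_1(\delta,p,c,a,b,n).
\]
The integral is at most the fixed number $\|u\|_{L^p(\mu)}^p$, so everything hinges on showing $V_y(s/40)\to\infty$ as $s\to\infty$. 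This is where \eqref{odd-assumption} is used: running the relative volume comparison (Theorem \ref{rel-vol-comparison-thm}, together with Remark \ref{lower-bound-d-remark}) and using that on the relevant balls the curvature is $\ge -4\delta s^{-2}\e^{\frac{4(\eps-1)}{n-1}\psi}g$, so that $\bs_{-cK}(\tau)^{1/c}$ is comparable to $\tau^{1/c}$ in the relevant range, one extracts a linear-type lower bound $V_y(s/40)\gtrsim s$ (and in particular $\mu(M)=\infty$), the hypothesis \eqref{odd-assumption} being exactly what keeps the conformal-factor constants coercive. Granting this, $u(y)^p\le C_1^p\,\|u\|_{L^p(\mu)}^p\,V_y(s/40)^{-1}\to 0$, i.e. $u\to 0$ at infinity.

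It remains to upgrade ``$u\to 0$ at infinity'' to ``$u\equiv 0$'', and there are two ways, which give the two promised proofs. With the maximum principle: $u\ge 0$ is $\psi$-subharmonic and tends to $0$ at infinity, so $\sup_M u$ is attained on a compact set; were it positive, the strong maximum principle together with connectedness of $M$ would force $u$ to equal that positive constant everywhere, contradicting the decay, so $u\equiv 0$. Without the maximum principle: decay at infinity makes $u$ bounded, hence $u\in L^{p'}(\mu)$ for some $p'>1$, so Theorem \ref{lp-liouville-1} forces $u$ to be constant, and the decay then gives $u\equiv 0$ (the earlier $\mu(M)=\infty$ also rules out a non-zero constant, but the decay argument handles this directly). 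Neither route passes through the elliptic Harnack inequality.

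The step I expect to be the main obstacle is the lower volume bound $V_y(s/40)\to\infty$ uniformly as $d(q,y)\to\infty$: one has to run the relative volume comparison along geodesics issuing from $y$, absorb the error from the variable conformal factor $\e^{-\frac{2(\eps-1)}{n-1}\psi}\in[a,b]$ and from the (small but nonzero) curvature, and it is precisely here that the quantitative hypothesis \eqref{odd-assumption}, which forces $b/a$ to be close to $1$, is needed to keep the comparison estimate coercive. The remaining ingredients --- recentering the mean value inequality and choosing $R\asymp d(q,y)$ so that $\sqrt{K_\eps}\,R$ stays bounded --- are routine once Theorem \ref{mean-eq-thm} and the curvature decay \eqref{curvature-assume-lp} are in hand.
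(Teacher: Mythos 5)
Your overall architecture (recentred mean value inequality with $R\asymp d(q,y)$ so that $\sqrt{K_\eps}\,R$ stays bounded, decay of $u$ at infinity, then upgrading to $u\equiv 0$ via $L^2$-Liouville or the maximum principle) is the same as the paper's. The genuine gap is exactly at the step you flag as "the main obstacle": the divergence $V_y(s/40)\to\infty$. You assert that a "linear-type lower bound $V_y(s/40)\gtrsim s$" comes from one application of Theorem \ref{rel-vol-comparison-thm} on balls where the curvature is $\le 4\delta s^{-2}$, but no such one-step argument can work. To get any nontrivial lower bound from the relative volume comparison you must compare $V_y(s/40)$ against an annulus $V_y(r,R)$ whose volume you already control, and the only region with an a priori volume lower bound is a fixed neighbourhood of $q$; that forces $R\ge d(q,y)+O(1)$, so the curvature constant entering the comparison is $K_\eps(y,R)$, which includes the region near $q$ where \eqref{curvature-assume-lp} gives no decay (only a fixed constant $K_0\ge 0$ from smoothness). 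With a constant curvature bound on a ball of radius $\sim s$ the ratio of $\bs_{-cK}$-integrals degenerates exponentially in $s$, so the bound is useless. Conversely, if you only use the ball $B_y(s/2)$, where the curvature really is $O(\delta s^{-2})$, you have no reference annulus of definite volume to compare with, and Ricci-type lower bounds alone do not prevent volume collapse. Even waiving both issues, a single comparison with $b>a$ yields a ratio of order $\bigl((1/(40b))^{1+1/c}\bigr)/\bigl(a^{-(1+1/c)}-b^{-(1+1/c)}\bigr)$, which is bounded in $s$ rather than growing linearly; linear growth only appears in the degenerate case $a=b$.

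The paper's proof supplies precisely the missing mechanism: a chain $x_i=\gamma(t_i)$ along the geodesic from $q$ to $x$ with geometrically growing radii $\beta^i/20$, so that each comparison is performed on a ball $B_{x_i}(\beta^i+2\beta^{i-1})$ whose distance to $q$ is comparable to its radius; then $\beta^i\sqrt{K_\eps(x_i,\beta^i+2\beta^{i-1})}$ is uniformly small for small $\delta$, each factor $D_i$ in \eqref{lp-lio-eq-2} is approximated by \eqref{d-i-approximation-1}, and the hypothesis \eqref{odd-assumption} (together with choosing $\beta$ large first and then $\delta$ small) is exactly what makes each $D_i>1$, so the product diverges and \eqref{lp-lio-eq-3} follows. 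Afterwards one still has to transfer the lower bound from $V_{x_k}(\beta^k/20)$ to a bound usable at the point $x$ itself, which the paper does either through the Case 2 annulus estimate (with Remark \ref{lower-bound-d-remark}) or through the maximum principle on the shells $\mathcal{F}_{2,j}$. In your sketch this multi-scale chaining, the role of \eqref{odd-assumption}, and the transfer step are all compressed into the unproved assertion of linear volume growth, so the proposal as it stands does not prove the theorem; filling that step essentially forces you back to the paper's argument.
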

\begin{remark}\rm{
    The assumption \eqref{odd-assumption} is used only to obtain \eqref{lp-lio-eq-3} in the following proof.
    It seems difficult to the author to prove \eqref{lp-lio-eq-3} without \eqref{odd-assumption},
    and this is the only reason why we assume \eqref{odd-assumption}.
    In an attempt trying to remove \eqref{odd-assumption}, the author considered using the volume comparison theorems for reparametrized distance in \cite{yoroshikin,kuwae_sakurai} to obtain \eqref{lp-lio-eq-3}, which did not work since the reparametrized distance does not satisfy the triangle inequality.
    Although in the case $N\in[n,\infty)$, $\eps = 1$ and $a = b = 1$, the assumption \eqref{odd-assumption} is satisfied, it is not always satisfied especially in the case $N \in (-\infty,1]\cup \{\infty\}$
    %%In the case $N \in (-\infty,1]$, the assumption \eqref{odd-assumption} are not always satisfied. 
    }.
\end{remark}
\begin{proof}
    We apply the arguments in \cite[Theorem 2.5]{li-schoen} and \cite[Theorem 6.1]{jy-1}.
    We first show that $u$ is necessarily constant.
    Since the case $p > 1$ is obtained in Theorem \ref{lp-liouville-1},
    we only consider the case $0 < p \leq 1$.
    If $u(x)$ goes to $0$ when $x$ is far enough away from $q$, then $u$ is an $L^{\infty}(\mu)$-function.
    In such a case, from $u\in L^p(\mu) \cap L^{\infty}(\mu)$, we deduce $u\in L^2(\mu)$ and, from Theorem \ref{lp-liouville-1}, $u$ is necessarily constant.
    In the following argument, we show that $u(x)$ is close to $0$ when $x$ is sufficiently away from $q$. 
    
    Let $x\in M$ and $\gamma:[0,T]\rightarrow M$ be a minimal geodesic satisfying $\gamma(0) = q, \gamma(T) = x$, where $T = d(q,x)$. For fixed $\beta > 1$, we set 
    \begin{equation*}
        t_0 = 0,\quad t_1 = 1 + \beta, \quad t_i = 2\sum_{j = 0}^i \beta^j - 1 -\beta^i
    \end{equation*}
    and we take $k\in \mathbb{N}$ such that $t_k \leq T$ and $t_{k + 1} > T$. For this $\{t_i\}_{1\leq i \leq k}$, we set $x_i = \gamma(t_i)$ and observe
    \begin{equation}\label{x-k-const}
        d(x_i, x_{i + 1}) = \beta^i + \beta^{i + 1}, \quad d(q,x_i) = t_i, \quad d(x_k,x) < \beta^k + \beta^{k + 1}.
    \end{equation}
    %We prove that $V_x(r)$ goes to $\infty$ if $r\rightarrow \infty$. 
    %%%%%%%%%%%%%%%%%%%
    For $i\leq k$, by the relative volume comparison in Theorem \ref{rel-vol-comparison-thm}, we have
    \begin{equation}\label{lp-lio-eq-15}
        V_{x_i}\left(\frac{\beta^i}{20}\right) \geq D_i V_{x_i}(\beta^i,\beta^i + 2\beta^{i -1}) \geq D_i V_{x_{i-1}}\left(\frac{\beta^{i-1}}{20}\right),
    \end{equation}
    where
    \begin{equation*}
        D_i = \frac{a}{b}\left(\int_0^{\beta^i/(20b)}\bs_{-cK}(\tau)^{1/c}\ \d\tau \right)\bigg/\left( \int^{(\beta^i + 2\beta^{i-1})/a}_{\beta^i/b}\bs_{-cK}(\tau)^{1/c} \ \d\tau\right)
    \end{equation*}
    with $K = K_{\eps}(x_i,\beta^i + 2\beta^{i-1})$.
    Iterating this inequality, we have 
    \begin{equation}\label{lp-lio-eq-2}
        V_{x_k}\left(\frac{\beta^k}{20}\right) \geq \left(\prod_{i = 1}^k D_i \right)V_q\left(\frac{1}{20}\right).
    \end{equation}
    We show $V_{x_k}\left(\beta^k / 20\right)\rightarrow \infty$ as $d(q,x)\rightarrow \infty$. 
    We first observe that we can make $\beta^i\sqrt{K_\eps(x_i, \beta^i + 2\beta^{i-1})}$ arbitrarily small by taking $\delta$ sufficiently small and we next use this fact to approximate the value of $D_i$.
    %We estimate $\beta^i\sqrt{K_\eps(x_i, \beta^i + 2\beta^{i-1})}$.
    For $y\in B_{x_i}(\beta^i + 2\beta^{i-1})$, we have 
    \begin{eqnarray}
        d(q,y) &\geq& d(q,x_i) - d(x_i,y)\nonumber\\
        &\geq& \left(2\sum_{j = 0}^{i}\beta^j - 1 - \beta^i \right)- (\beta^i + 2\beta^{i-1})\nonumber\\
        &=& 2\sum_{j = 0}^{i-2}\beta^j - 1\nonumber\\
        &=& \frac{1-2\beta^{i-2} + \beta}{1- \beta}\label{lp-lio-eq-1}.
    \end{eqnarray}
    Combining this with the assumption of the curvature \eqref{curvature-assume-lp} (we take $\delta$ sufficiently small later), for sufficiently large $i$, we have
    \begin{eqnarray*}
        \beta^i\sqrt{K_\eps\left( x_i,\beta^i + 2\beta^{i-1} \right)}& = & \beta^i\sqrt{\sup_{y\in B_i}K_\eps(y)}\\
        &\leq&\beta^i\sup_{y\in B_i}\frac{\sqrt{\delta}}{d(q,y)}\\
        &\leq & \frac{\beta^2(\beta-1)\sqrt{\delta}}{2-\beta^{2-i} - \beta^{3-i}},
    \end{eqnarray*}
    where we denote $B_i = B_{x_i}\left(\beta^i + 2\beta^{i-1}\right)$.
    %Combining this with \eqref{lp-lio-eq-1}, we get
    %\begin{equation*}
    %    \beta^i \sqrt{K(x_i,\beta^i + 2\beta^{i-1})} \leq \frac{(\beta-1)\beta}{2-\beta^{2-i}-\beta^{1-i}}(\beta^2 + \beta)\sqrt{\delta}.
    %\end{equation*}
    Therefore, $\beta^i\sqrt{K_\eps(x_i,\beta^i + 2\beta^{i-1})}$ can be made arbitrarily small by taking $\delta$ small enough for a fixed $\beta$, which allows the following approximation of $D_i$.
    By the first order approximation of $\sinh$, $D_i$ is approximated:
    \begin{eqnarray}
        \frac{a}{b}\frac{\{\beta^i\sqrt{K}/(20b)\}^{1 + \frac{1}{c}}}{((\beta^i + 2\beta^{i-1})\sqrt{K}/a)^{1 + \frac{1}{c}} - (\beta^i \sqrt{K}/ b)^{1 + \frac{1}{c}}}&=& \frac{a}{b}\frac{1}{20^{1 + \frac{1}{c}}} \frac{1}{\left\{\left( 1 + 2/\beta\right)b/a\right\}^{1 + 1/c} - 1}\label{d-i-approximation-1},
        %&\geq& \frac{a}{b}\frac{1}{20^{1 + \frac{1}{c}}} \frac{(\beta/b)^{1 + \frac{1}{c}}}{\{(\beta + 2)/b\}^{1 + \frac{1}{c}} - (\beta / b)^{1 + \frac{1}{c}}}\nonumber \\
        %&=&  \frac{a}{b} \frac{1}{20^{1 + \frac{1}{c}}} \frac{1}{(1 + 2/\beta)^{1 + \frac{1}{c}} -1 } \label{d-i-approximation-2},
    \end{eqnarray}
    where we denote $K = K_\eps(x_i,\beta^i + 2\beta^{i-1})$.
    From the assumption \eqref{odd-assumption}, we can make \eqref{d-i-approximation-1} larger than $1$ by taking $\beta$ sufficiently large depending on $c,a,b$,  and then taking $\delta$ sufficiently small so that  the left-hand side of \eqref{d-i-approximation-1} approximate $D_i$,
    we can make $D_i$ larger than $1$. Hence, from \eqref{lp-lio-eq-2}, we deduce
    \begin{equation}\label{lp-lio-eq-3}
        \lim_{k\rightarrow \infty}V_{x_k}\left(\frac{\beta^k}{20}\right) = \infty.
    \end{equation}

    %%%%%%%%%%%%%%%%%%%

    In the following Case 1 and Case 2, we estimate $u(x)$ when $x$ is far enough away from $q$.
    
    \underline{Case 1: When $d(x,x_k) <  \beta^k / 20$.}
    
    By the mean value inequality \eqref{mean-value-inequality-k}, we have
    \begin{eqnarray}
        u(x)&\leq& \sup_{B_{x_k}(\beta^k/20)}u\leq \widetilde{E}_4 \exp\left(\widetilde{D}_4\sqrt{K_\eps(x_k,\beta^k)}\beta^k \right) V_{x_k}\left(\frac{\beta^k}{20}\right)^{-\frac{1}{p}} \|u\|_p\label{lp-lio-eq-4},
    \end{eqnarray}
    where $\widetilde{D}_4$ and $\widetilde{E}_4$ are constants depending on $p,c,a,b,n$.
    
    %We also note that $\beta^k\sqrt{K_{\eps}(x_k,\beta^k)}$ is bounded from above as $k\rightarrow \infty$. Therefore, the right-hand side of \eqref{lp-lio-eq-4} goes to $0$ when letting $d(q,x)\rightarrow \infty$. 

    \underline{Case 2: When $d(x,x_k) \geq \beta^k/20$.}
    
    By the mean value inequality \eqref{mean-value-inequality-k}, we also get 
    \begin{eqnarray}
        u(x) &\leq& \sup_{B_x(\beta^k/20)}u\leq \widetilde{E}_4 \exp\left( \widetilde{D}_4\sqrt{K_{\eps}(x,\beta^k)} \beta^k \right) V_x\left(\frac{\beta^k}{20}\right)^{-\frac{1}{p}}\|u\|_p \label{lp-lio-eq-5}.
    \end{eqnarray}
    We estimate the right-hand side of \eqref{lp-lio-eq-5}.
    Note that
    \begin{equation*}
        B_{x_k}\left( \frac{\beta^k}{20} \right) \subset B_{x}\left(d(x,x_k)-\frac{\beta^k}{20}, d(x,x_k) + \frac{\beta^k}{20} \right).
    \end{equation*}
Combining this with the argument in the proof of Theorem \ref{rel-vol-comparison-thm}, we have
    \begin{eqnarray}\label{lp-lio-eq-6}
        V_{x_k}\left(\frac{\beta^k}{20}\right) &\leq& V_x\left(d(x,x_k) - \frac{\beta^k}{20},d(x,x_k) + \frac{\beta^k}{20}\right) \leq \frac{b}{a} \int_{U_x M} E(v) \int_0^{\min\left\{\frac{\beta^k}{20}, \rho(v)\right\}}h(t)^{1/c}\ \d t\ \d \Xi(v),\qquad
    \end{eqnarray}
    where 
    \begin{equation}\label{e-est-1}
        E(v) =  \int_{\varphi_\eta\left(\min\left\{d(x,x_k) - \frac{\beta^k}{20},\rho(v)\right\}\right)}^{\varphi_\eta\left(\min\left\{d(x,x_k) + \frac{\beta^k}{20},\rho(v)\right\}\right)}\bs_{-cK}(\tau)^{1/c}\ \d \tau   \bigg/          \int_{0}^{\varphi_\eta\left(\min\left\{\frac{\beta^k}{20},\rho(v)\right\}\right)}\bs_{-cK}(\tau)^{1/c} \ \d \tau
    \end{equation}
    with $K = K_{\eps}\left(x,d(x,x_k) + \frac{\beta^k}{20}\right)$.
    In the case where $v\in U_xM$ satisfies $d(x,x_k) - \frac{\beta^k}{20} < \rho(v) <  \frac{\beta^k}{20}$, 
     we have
    \begin{equation}\label{lp-lio-eq-11}
        E(v) \leq 1
    \end{equation}
    from Remark \ref{lower-bound-d-remark}.
    In other cases, from the argument in Theorem \ref{rel-vol-comparison-thm}, we have 
    \begin{equation}\label{lp-lio-eq-12}
        E(v) \leq    \int_{\left(d(x,x_k)-\frac{\beta^k}{20}\right)/b}^{\left(d(x,x_k)+\frac{\beta^k}{20}\right) / a}\bs_{-cK}(\tau)^{1/c}\ \d \tau \bigg/  \int_0^{\beta^k/(20b)}\bs_{-cK}(\tau)^{1/c}\ \d \tau.
    \end{equation}
    %\begin{eqnarray}
    %    V_x(r,R) &\leq& \int_{U_x M}\int_{\min\{r,\rho(v)\}}^{\min\{R,\rho(v)\}} h(t)^{1/c} \d t\  \d \Xi( v) \nonumber\\
    %    &\leq& \frac{b}{a} \int_{U_x M} \left(\int_{\varphi_\eta(\min\{r,\rho(v)\})}^{\varphi_\eta(\min\{R,\rho(v)\})}\bs_{-cK}(\tau)^{1/c}\d \tau \bigg/ \int_{\varphi_\eta(\min\{s,\rho(v)\})}^{\varphi_\eta(\min\{S,\rho(v)\})}\bs_{-cK}(\tau)^{1/c} \d \tau\right)  \int_s^S h(t)^{1/c}\ \d t \ \d \Xi(v)\nonumber\\
    %    %&=& \frac{b}{a}\left(\int_{\varphi_\eta(\min\{r,\rho(v)\})}^{\varphi_\eta(\min\{R,\rho(v)\})}\bs_{cK}(\tau)^{1/c}\d \tau \bigg/ \int_{\varphi_\eta(\min\{s,\rho(v)\})}^{\varphi_\eta(\min\{S,\rho(v)\})}\bs_{cK}(\tau)^{1/c} \d \tau\right)V_x(s,S).\label{rel-vol-eq-1}
    %\end{eqnarray}
    %Combining \eqref{lp-lio-eq-2} and \eqref{lp-lio-eq-6}, we obtain
    %\begin{equation}\label{lio-lp-eq-7}
    %    V_x\left(\frac{\beta^{k}}{20}\right) \geq D \left(\prod_{i = 1}^k D_i\right) V_q\left(\frac{1}{20}\right).
    %\end{equation}
    We shall show that the right-hand side of \eqref{lp-lio-eq-12} is bounded from above.
    Since we have
    \begin{eqnarray*}
        \sqrt{K_\eps\left(x,d(x,x_k) + \beta^k\right)} &=& \sqrt{\sup_{y\in B_k}K_\eps(y)}\\
        &\leq& \sup_{y\in B_k}\frac{\sqrt{\delta}}{d(q,y)}\\
        &\leq& \frac{\sqrt{\delta}}{1 + 2\beta + \cdots + 2\beta^{k-1}}\\
        &=& \frac{(1-\beta)\sqrt{\delta}}{1-2\beta^k + \beta},
    \end{eqnarray*}
    where we denote $B_k = B_x(d(x,x_k) + \beta^k)$, we get
    \begin{eqnarray}
        \left(d(x,x_k) + \frac{\beta^k}{20}\right)\sqrt{K_\eps\left(x,d(x,x_k) + \frac{\beta^k}{20} \right)}  &\leq &(d(x,x_k) + \beta^k)\sqrt{K_\eps(x,d(x,x_k) + \beta^k)}\nonumber \\
        &\leq& (\beta^{k + 1} + 2\beta^k)\frac{1-\beta}{1 + \beta - 2\beta^{k}}\sqrt{\delta}\nonumber\\
        %&=& (2 + \beta)\frac{\beta^k(\beta-1)}{2\beta^{k}- \beta - 1}\sqrt{\delta}\nonumber\\
        &=& (2 +\beta)\frac{(\beta-1)}{2- \beta^{1-k} - \beta^{-k}}\sqrt{\delta}\nonumber.
    \end{eqnarray}
    Hence, if $k$ is sufficiently large, we obtain
    \begin{equation}\label{lp-liouville-eq-8}
        \left(d(x,x_k) + \frac{\beta^k}{20}\right)\sqrt{K_\eps\left(x,d(x,x_k) + \frac{\beta^k}{20}\right)} \leq (2 + \beta)(\beta-1) \sqrt{\delta}.
    \end{equation}
    By taking $\delta$ small enough for some fixed $\beta$, the right-hand side of \eqref{lp-liouville-eq-8} can be made arbitrarily small.
    Then, from the first order approximation, $E(v)$ is approximated by the left-hand side of \eqref{e-approximation-1} and it is bounded from above by \eqref{d-approximation-1} as follows:
    \begin{eqnarray}
        \frac{\left(\left(d(x,x_k) + \frac{\beta^k}{20}\right)/a\right)^{1 + \frac{1}{c}} - \left(\left(d(x,x_k) - \frac{\beta^k}{20}\right)/b\right)^{1 + \frac{1}{c}}}{(\beta^k/(20b))^{1 + \frac{1}{c}}}&\leq &  \frac{\left(\left(d(x,x_k) + \beta^k\right)/a\right)^{1 + \frac{1}{c}}}{(\beta^k/(20b))^{1 + \frac{1}{c}}} \label{e-approximation-1}\\
        &\leq &  \frac{\left(\left(2\beta^k + \beta^{k + 1}\right)/a\right)^{1 + \frac{1}{c}}}{(\beta^k/(20b))^{1 + \frac{1}{c}}} \nonumber\\
        &=& 20^{1 + \frac{1}{c}} \frac{b^{1 + \frac{1}{c}}}{a^{1 + \frac{1}{c}}}\left(2 + \beta\right)^{1 + \frac{1}{c}}\label{d-approximation-1}.
    \end{eqnarray} 
    Combining \eqref{lp-lio-eq-6} with \eqref{lp-lio-eq-11} and \eqref{d-approximation-1}, 
    we see that there exists $C_{12}$ depending on $c,a,b,\beta$ such that 
    %We denote 
    %\begin{equation*}
    %    E(v) \leq D^{-1},
    %\end{equation*}
    %where $D$ is a constant depending on $c,a,b,\beta$ even when $k$ is increased.
    %From \eqref{lp-lio-eq-6}, we have 
    \begin{equation*}
        V_{x_k}\left(\frac{\beta^k}{20}\right) \leq  V_x\left( d(x,x_k) - \frac{\beta^k}{20}, d(x,x_k) + \frac{\beta^k}{20}\right) \leq C_{12} V_x\left(\frac{\beta^k}{20}\right).
    \end{equation*}
    Combining this with \eqref{lp-lio-eq-5}, we obtain 
    \begin{equation*}
        u(x)\leq \widetilde{E}_5 \exp\left(\widetilde{D}_5\sqrt{K_\eps(x,\beta^k)}\beta^k \right)V_{x_k}\left(\frac{\beta^k}{20}\right)^{-\frac{1}{p}} \|u\|_p ,
    \end{equation*}
    where $\widetilde{D}_5$ and $\widetilde{E}_5$ are constatnts depending on $p,c,a,b,n,\beta$.
    
    %Since $k$ increases as $x$ moves away from $q$, we have $\left(\prod_{i = 1}^k D_i\right) \rightarrow \infty$ as $d(q,x) \rightarrow \infty$.
    %Hence, from the boundedness of $D$ and \eqref{lio-lp-eq-7},
    % we deduce that $V_x(\beta^k/ 40) \rightarrow \infty $ as $d(q,x)\rightarrow \infty$. 
    % Considering that $\beta^k\sqrt{K_{\eps}(x_k,\beta^k)}$ is bounded from above as $k\rightarrow \infty$, the right-hand side of \eqref{lp-lio-eq-5} goes to $0$ as $d(q,x)\rightarrow \infty$.
     
     By Case 1 and Case 2, we can bound $u(x)$ from above as
     \begin{equation}\label{lp-lio-eq-10}
        u(x)\leq \widetilde{E}_6\exp\left\{ \widetilde{D}_6\max\left( \sqrt{K_\eps\left(x_k,\beta^k\right)}, \sqrt{K_\eps(x,\beta^k)}\right)\beta^k \right\} V_{x_k}\left(\frac{\beta^k}{20}\right)^{-\frac{1}{p}}\|u\|_p,
     \end{equation}
     where $\widetilde{D}_6$ and $\widetilde{E}_6$ are constants depending on $p,c,a,b,n,\beta$. 
     Since $k$ increases as $x$ moves away from $q$, we deduce that $V_{x_k}(\beta^k/ 20) \rightarrow \infty $ as $d(q,x)\rightarrow \infty$ by \eqref{lp-lio-eq-3}.
     Since $\beta^k\sqrt{K_{\eps}(x_k,\beta^k)}$ and $\beta^k \sqrt{K_\eps(x,\beta^k)}$ are bounded from above as $d(q,x)\rightarrow \infty$ by the assumption \eqref{curvature-assume-lp}, the right-hand side of \eqref{lp-lio-eq-10} goes to $0$ as $d(q,x)\rightarrow \infty$.
     Therefore, recalling the argument at the beginning of this proof, $u$ is a constant function.
     %and this upper bound goes to $0$ as $x$ moves away from $q$. 
     %In the argument above, we take $\beta$ so as to make $D_i$ larger than $1$ and, for this $\beta$, we take $\delta$ so that the approximations of $D_i$ in \eqref{d-i-approximation-1} and $D$ in \eqref{d-approximation-1} hold.
     In the argument above, we take $\beta$ so as to make \eqref{d-i-approximation-1} larger than $1$, and for this $\beta$, we take $\delta$ so that the approximations of $D_i$ in \eqref{d-i-approximation-1} and $E(v)$ in \eqref{e-approximation-1} hold. 
     Therefore, we see that $\delta$ depends only on $c,a,b$. 
     
    From \eqref{lp-lio-eq-3}, we have $\mu(M) = \infty$ for sufficiently small $\delta$. Hence, every constant $L^2$-function is identically zero and the theorem follows. 
\end{proof}
Theorem \ref{lp-liouvolle-eps-range} can also be shown without going through the argument in Case 2.
Indeed, we have the following another proof of Theorem \ref{lp-liouvolle-eps-range} which uses the maximum principle.

\underline{\textit{Alternative proof of Theorem \ref{lp-liouvolle-eps-range}}}

In the previous proof, the value of $u(x)$ was evaluated by Case 1 and Case 2 to show that $u(x)$ approaches $0$ as $x$ moves away from $q$. 
On the other hand, in this proof, we combine the argument of Case 1 with the maximum principle to show that $u(x)$ goes down to $0$ when $x$ moves away from $q$.
We take $\beta$ sufficiently large so as to make \eqref{d-i-approximation-1} larger than $1$, and for this $\beta$, we take $\delta$ so that the approximation \eqref{d-i-approximation-1} of $D_i$ holds.
For $j\geq 2$, we set 
\begin{equation*}
    \mathcal{F}_{1,j} = \left\{y\in M\ \bigg|\ 1 + (2\beta + \cdots + 2\beta^{j-1}) + \beta^j < d(q,y) < 1 + (2\beta + \cdots + 2\beta^{j-1}) + \beta^j + \frac{\beta^j}{20}\right\}
\end{equation*}
and 
\begin{equation*}
    \mathcal{F}_{2,j} = \left\{y\in M\ \bigg|\ 1 + (2\beta + \cdots + 2\beta^{j-1}) + \beta^j + \frac{\beta^j}{20} \leq d(q,y)\leq 1 + (2\beta + \cdots + 2\beta^{j-1} + 2\beta^j) + \beta^{j + 1} \right\}.
\end{equation*}
By the maximum principle, $u$ takes its maximum value on $\mathcal{F}_{2,j}$ at the boundary of $\mathcal{F}_{2,j}$, which we prove in the following argument for the sake of completeness.

%As in the argument in Lemma \ref{yau-poincare-lem}, 
By Theorem \ref{laplacian_comparison_theorem}, for $x\in \mathcal{F}_{2,j}$ and $d_q(x) = d(q,x)$, we have
\begin{equation}\label{another-proof-lp-eq-1}
    \Delta_\psi d_q(x) \leq \frac{\sqrt{K}}{a\sqrt{c}}\coth\left(\frac{\sqrt{cK}}{b}d_q(x)\right) \leq \frac{\sqrt{K}}{a\sqrt{c}} + \frac{b}{acd_q(x)},
\end{equation}
where we set
\begin{equation*}
    K = \sup_{y\in B}K_\eps(y) \quad \mbox{for} \quad B = \{y\in M \ |\  d(q,y) \leq 1 + (2\beta + \cdots + 2\beta^{j-1} + 2\beta^j) + \beta^{ j+ 1}\}.
\end{equation*}
%By the assumption of the theorem, we have 
%\begin{equation*}
%    K \leq \frac{\delta}{1 + (2\beta + \cdots + 2\beta^{k-1}) + \beta^k + \frac{\beta^k}{20} } .
%\end{equation*}
For $x\in \mathcal{F}_{2,j}^{\circ}$ ($A^{\circ}$ denotes the interior of $A$),
by the definition of $\mathcal{F}_{2,j}$, we have 
\begin{equation*}
    \Delta_\psi d_q(x) \leq \frac{\sqrt{K}}{a\sqrt{c}}+ \frac{b}{ac}\frac{1}{1 + (2\beta + \cdots + 2\beta^{j-1}) + \beta^j + \frac{\beta^j}{20} } =: G.
\end{equation*}
 Hence, for $x\in \mathcal{F}_{2,j}^{\circ}$, we have
\begin{eqnarray}
    \Delta_\psi \exp\left(-2Gd_q(x)\right) &=& (-2G\Delta_\psi d_q(x) + 4G^2)\exp\left(-2Gd_q(x)\right) \nonumber\\
    &\geq& 2G^2 \exp\left(-2G d_q(x)\right) > 0.\label{another-proof-eq-4}
\end{eqnarray}
Here, for $\eps > 0$, we set 
\begin{equation*}
    u_\eps = u + \eps \e^{-2G d_q(x)}.
\end{equation*}
From \eqref{another-proof-eq-4} and $\Delta_\psi u \geq 0$, we see that $\Delta_\psi u_\eps > 0$ in the interior of $\mathcal{F}_{2,j}$. If $u_\eps$ takes its maximum at $z\in (\mathcal{F}_{2,j})^{\circ}$, we obtain 
\begin{equation*}
    \nabla u_\eps (z) = 0, \quad \Delta_\psi u_\eps(z) = \Delta u_\eps(z) \leq 0.
\end{equation*}
Since this leads to the contradiction, $u_\eps$ takes its maximum value at the boundary. 
Letting $\eps\rightarrow 0$, we see that $u$ also takes its maximum value on $\mathcal{F}_{2,j}$ at the boundary of $\mathcal{F}_{2,j}$. 

Next, we estimate $u$ on $\mathcal{F}_{1,k}$. 
For $x\in \mathcal{F}_{1,k}$, we construct the sequence $\{x_i\}_{1\leq i \leq k}$ as in \eqref{x-k-const} for $\beta > 0$. From \eqref{lp-lio-eq-2} and \eqref{lp-lio-eq-4}, we have 
\begin{equation*}
    u(x) \leq  \widetilde{E}_4 \exp\left( \widetilde{D}_4\sqrt{K_\eps(x_k,\beta^k)}\beta^k \right) \left\{\left(\prod_{i = 1}^k D_i \right)V_q\left(\frac{1}{20}\right)\right\}^{-\frac{1}{p}} \|u\|_p,
\end{equation*}
 where $D_i, \widetilde{D}_4, \widetilde{E}_4$ are taken as in the proof above.
From the assumption \eqref{curvature-assume-lp}, we have 
\begin{eqnarray*}
    \beta^k\sqrt{K_\eps(x_k,\beta^k)} &\leq&  \frac{\beta^k \sqrt{\delta}}{1 + 2\beta + \cdots + 2\beta^{k-1}} \leq \frac{\beta\sqrt{\delta}}{2}.
\end{eqnarray*}
Hence, we have 
\begin{equation}\label{another-proof-eq-2}
    u(x) \leq \widetilde{E}_4 \exp\left( \frac{\beta \widetilde{D}_4 \sqrt{\delta}}{2}\right) \left\{\left(\prod_{i = 1}^k D_i \right)V_q\left(\frac{1}{20}\right)\right\}^{-\frac{1}{p}} \|u\|_p.
\end{equation}
Since \eqref{another-proof-eq-2} holds for each $k\geq 2$, we have 
\begin{equation}\label{another-proof-eq-3}
    u(x) \leq \widetilde{E}_4 \exp\left( \frac{\beta \widetilde{D}_4 \sqrt{\delta}}{2}\right) \left\{\left(\prod_{i = 1}^{k+1} D_i \right)V_q\left(\frac{1}{20}\right)\right\}^{-\frac{1}{p}} \|u\|_p
\end{equation}
for $x\in \mathcal{F}_{1,k + 1}$.
Recalling that $\mathcal{F}_{2,k}$ is sandwiched with $\mathcal{F}_{1,k}$ and $\mathcal{F}_{1,k+1}$, 
we see that the value of $u$ at the boundary of $\mathcal{F}_{2,k}$ is bounded from above by \eqref{another-proof-eq-2} and \eqref{another-proof-eq-3} by the maximum principle.
Hence, we obtain
\begin{equation*}
    u(x) \leq \widetilde{E}_4 \exp\left( \frac{\beta \widetilde{D}_4 \sqrt{\delta}}{2}\right) \left\{\left(\prod_{i = 1}^k D_i \right)V_q\left(\frac{1}{20}\right)\right\}^{-\frac{1}{p}} \|u\|_p
\end{equation*}
for sufficiently large $k$ and $x\in\mathcal{F}_{2,k}$. 
Therefore, we see that $u(x)$ goes to $0$ when $x$ moves away from $q$ and the theorem follows. \qed

%In the proof of Theorem \ref{wu-lp-liouville} in  \cite{jy-1}, we essentially used the fact that under lower bounds of $\Ric_\psi^\infty$ with bounded weight functions, volume growth estimates are obtained. 
%In the above corollary, we are dealing with the case of $N \leq 1$, and while simply assuming the constant curvature bounds do not yield volume growth estimates, assuming variable curvature bounds in the $\eps$-range, we get volume growth estimates. %and we can obtain Corollary \ref{lp-liouvolle-non-negative}.
%Thus, based on the arguments of $\eps$-range, the claim of Corollary \ref{lp-liouvolle-non-negative} that $\eps$-range does not appear explicitly in its claim is obtained.

%Under the lower bounds of $\infty$-Ricci curvature, it has been mentioned in \cite{jy-1} was used in \cite{jy-1}, and the volume comparison is a generalization of this. 
%It leads to functional inequalities that require various volume comparisons and allows us to implement Moser's iteration argument. In this way, in this chapter we have contributed to the generalization of \cite{jy-1}.
 Considering the case of $N \in [n,\infty)$, $a = b = 1$ and $\eps = 1$ in Theorem \ref{lp-liouvolle-eps-range}, we obtain the following $L^p$-Liouville theorem.

\begin{theorem}($L^p$-Liouville theorem for $N\in [n,\infty)$)\label{lp-n-liouville}
    Let $(M,g,\mu)$ be an $n$-dimensional complete weighted Riemannian manifold and $N \in  [n,\infty)$.
    For $p\in (0,\infty)$,  let $u$ be a smooth non-negative $L^p(\mu)$ function satisfying $\Delta_\psi u \geq 0$.
    Then, there exists a constant $\delta > 0$ depending on $c,a,b$ such the following property holds:

    If, for some $q\in M$, we have
    \begin{equation}
        \Ric_\psi^N \ge -\delta d(q,x)^{-2} 
    \end{equation}
    when $d(q,x)$ is sufficiently large, then $u$ is identically zero.
\end{theorem}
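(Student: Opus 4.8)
The plan is to obtain Theorem~\ref{lp-n-liouville} simply as the case $\eps = 1$, $a = b = 1$ of Theorem~\ref{lp-liouvolle-eps-range}; the whole proof then consists of checking that these parameter choices satisfy all the hypotheses of that theorem and that its conclusion specializes correctly.

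First I would check that $\eps = 1$ lies in the $\eps$-range \eqref{epsilin-range} for every $N \in [n,\infty)$. For $N = n$ any real $\eps$ is allowed, so there is nothing to verify. For $N > n$ one needs $1 = |\eps| < \sqrt{(N-1)/(N-n)}$, i.e.\ $N-1 > N-n$, which holds because $n \ge 2 > 1$ (and $N$ is finite, so the strict inequality is genuine). I would then record the value of the associated constant $c$ from \eqref{c_num}: $c = (n-1)^{-1}$ when $N = n$ and $c = \frac{1}{n-1}\bigl(1 - \frac{N-n}{N-1}\bigr) = \frac{1}{N-1}$ when $N > n$; in particular $c > 0$, so $1 + 1/c$ is a finite positive quantity determined by $n$ and $N$.

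Next, with $\eps = 1$ the prefactors $\e^{\frac{4(\eps-1)}{n-1}\psi}$ and $\e^{-\frac{2(\eps-1)}{n-1}\psi}$ are both identically $1$, so the pinching hypothesis $a \le \e^{-\frac{2(\eps-1)}{n-1}\psi} \le b$ of Theorem~\ref{lp-liouvolle-eps-range} holds with $a = b = 1$, and the curvature hypothesis \eqref{curvature-assume-lp} collapses to $\Ric_\psi^N \ge -\delta\, d(q,x)^{-2}$, which is exactly the hypothesis of Theorem~\ref{lp-n-liouville}. (For $N = n$ the weight $\psi$ is constant by the very definition of $\Ric_\psi^n$, which is of course compatible with $\eps = 1$.) I would also note that the auxiliary inequality \eqref{odd-assumption} is harmless here: its left-hand side with $a = b = 1$ equals $1\cdot(1^{1+1/c} - 1) = 0 < 20^{-(1+1/c)}$, so it imposes no restriction.

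Finally I would invoke Theorem~\ref{lp-liouvolle-eps-range} with these choices: since $u$ is a smooth non-negative $L^p(\mu)$-function with $\Delta_\psi u \ge 0$, there is $\delta > 0$ depending on $c$ (and hence only on $n$ and $N$) such that the bound $\Ric_\psi^N \ge -\delta\, d(q,x)^{-2}$ for $d(q,x)$ sufficiently large forces $u \equiv 0$, which is precisely the assertion. I do not expect any real obstacle; the only points deserving a sentence of care are the admissibility of $\eps = 1$ throughout the full interval $N \in [n,\infty)$ (which relies on $n \ge 2$ and on $N$ being finite) and the observation that the awkward-looking condition \eqref{odd-assumption} degenerates to a trivial inequality once $a = b$.
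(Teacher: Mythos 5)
Your proposal is correct and is exactly the paper's own derivation: Theorem \ref{lp-n-liouville} is obtained by specializing Theorem \ref{lp-liouvolle-eps-range} to $\eps = 1$, $a = b = 1$, where the pinching hypothesis and the curvature condition \eqref{curvature-assume-lp} reduce to those of the statement and \eqref{odd-assumption} becomes trivial. Your verification of the admissibility of $\eps = 1$ for all finite $N \ge n$ and of the value of $c$ matches the (brief) argument in the paper.
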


\subsection{The case of  $N = \infty$}\label{lp-infty-section}
%The purpose of this subsection is to give a proof of Theorem \ref{wu-lp-liouville} different from that in \cite{jy-1}.

In this subsection, by combining the arguments of Theorem \ref{lp-liouvolle-eps-range} and the following Theorem \ref{infty-relative-volume-comparison-thm}, we prove Theorem \ref{wu-lp-liouville}.
Our proof is essentially different from that of \cite{jy-1} as mentioned in the introduction of Section 4.
Although the proof of Theorem \ref{lp-liouvolle-eps-range} used \eqref{lp-lio-eq-3}, this was not guaranteed without the assumption of \eqref{odd-assumption}. 
However, in the case of $N = \infty$, using the following volume comparison in Theorem \ref{infty-relative-volume-comparison-thm} instead of that in Theorem \ref{rel-vol-comparison-thm}, we can obtain \eqref{lp-lio-eq-3} without assuming \eqref{odd-assumption}.
This is the same strategy taken in \cite[Theorem 6.1]{jy-1}.
Although the following theorem is obtained in \cite[(4.10)]{wei-wylie}, we prove it here for the sake of completeness.
\begin{theorem}\label{infty-relative-volume-comparison-thm}
    Let $(M,g,\mu)$ be an $n$-dimensional complete Riemannian manifold. We assume $\Ric_\psi^{\infty}\geq -K$ with $K > 0$ and $|\psi| \leq A$ for some constant $A > 0$.
    Then, we have 
    \begin{equation}\label{infty-relative-volume-comparison}
        V_x(S) \geq \frac{\int_0^S \bs_{-K/(n-1)} (t)^{n-1 + 4A} \ \d t}{\int_r^R \bs_{-K/(n-1)} (t)^{n-1 + 4A} \ \d t}V_x(r,R)
    \end{equation}
    for arbitrary $R > r \geq 0$ and $S > 0$ satisfying $r\geq S$ and $x\in M$.
\end{theorem}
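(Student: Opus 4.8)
The plan is to follow the proof of Theorem~\ref{rel-vol-comparison-thm} almost verbatim, replacing the $\eps$-range monotonicity \eqref{non_increasing_property} by its classical counterpart for $\Ric_\psi^{\infty}\ge -K$ with bounded weight, which is due to Wei--Wylie. First I would set up geodesic polar coordinates centred at $x$: for $v\in U_xM$ put $\eta_v(t):=\exp_x(tv)$, let $\rho(v):=\sup\{t>0\mid d(x,\eta_v(t))=t\}$ be the cut distance, let $\mathcal A_v(t)$ denote the volume density of geodesic spheres along $\eta_v$ (so $\mathcal A_v(t)=(\det A(t))^{1/2}$ in the notation of \cite{lu} recalled above), and set $\mathcal A_v^{\psi}(t):=\e^{-\psi(\eta_v(t))}\mathcal A_v(t)$. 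Then $V_x(S)=\int_{U_xM}\int_0^{S\wedge\rho(v)}\mathcal A_v^{\psi}(t)\,\d t\,\d\Xi(v)$ and likewise $V_x(r,R)=\int_{U_xM}\int_{r\wedge\rho(v)}^{R\wedge\rho(v)}\mathcal A_v^{\psi}(t)\,\d t\,\d\Xi(v)$, where $\Xi$ is the measure on $U_xM$ induced by $g$.

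The heart of the matter is the one-dimensional input: under $\Ric_\psi^{\infty}\ge -K$ with $K>0$ and $|\psi|\le A$, for every $v\in U_xM$ the function $t\mapsto \mathcal A_v^{\psi}(t)\big/\bs_{-K/(n-1)}(t)^{\,n-1+4A}$ is non-increasing on $(0,\rho(v))$; this is exactly the content of \cite[(4.10)]{wei-wylie}. I would reproduce its proof, which is a Bochner/Riccati mean-curvature comparison rather than the Laplacian comparison used earlier: the unweighted mean curvature $m(t):=\partial_t\log\mathcal A_v(t)$ satisfies $m'\le -m^2/(n-1)-\Ric(\dot\eta_v,\dot\eta_v)$, and since $\Ric(\dot\eta_v,\dot\eta_v)=\Ric_\psi^{\infty}(\dot\eta_v,\dot\eta_v)-(\psi\circ\eta_v)''\ge -K-(\psi\circ\eta_v)''$, integrating the resulting differential inequality against the model $\bar m$ solving $\bar m'=-\bar m^2/(n-1)+K$ and using only the uniform bound $|\psi|\le A$ to control the terms involving $\psi$ along $\eta_v$ (no bound on $\langle\nabla\psi,\dot\eta_v\rangle$ is needed) yields the comparison with the shifted exponent $n-1+4A$.

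Granting this, I would extend $\mathcal A_v^{\psi}(\cdot)$ by $0$ on $[\rho(v),\infty)$, so that $t\mapsto \mathcal A_v^{\psi}(t)/\bs_{-K/(n-1)}(t)^{\,n-1+4A}$ is non-increasing on all of $(0,\infty)$. Since $0<S\le r\le R$, Gromov's lemma (e.g.\ \cite[Lemma~3.2]{zhu}) applied with $g=\mathcal A_v^{\psi}$ and $h=\bs_{-K/(n-1)}^{\,n-1+4A}$ gives, for each fixed $v$,
\[
\int_0^{S\wedge\rho(v)}\mathcal A_v^{\psi}(t)\,\d t
\;\ge\;
\frac{\int_0^{S}\bs_{-K/(n-1)}(t)^{\,n-1+4A}\,\d t}{\int_{r}^{R}\bs_{-K/(n-1)}(t)^{\,n-1+4A}\,\d t}\,
\int_{r\wedge\rho(v)}^{R\wedge\rho(v)}\mathcal A_v^{\psi}(t)\,\d t .
\]
The ratio of integrals on the right is independent of $v$, so integrating over $v\in U_xM$ against $\d\Xi$ and reading off the polar-coordinate expressions for $V_x(S)$ and $V_x(r,R)$ gives \eqref{infty-relative-volume-comparison}.

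I expect the only genuinely non-routine step to be the one-dimensional comparison of the second paragraph — turning the weighted curvature bound into a scalar Riccati inequality and tracking precisely how $|\psi|\le A$ enters so as to produce the exponent $n-1+4A$ (and not merely a multiplicative constant $\e^{4A}$); everything else is the same packaging (polar coordinates plus Gromov's lemma, with the cut locus handled uniformly by truncation) already carried out for Theorem~\ref{rel-vol-comparison-thm}.
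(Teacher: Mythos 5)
Your proposal is correct and follows essentially the same route as the paper: the key monotonicity of $\e^{-\psi}\mathcal A_v(t)\big/\bs_{-K/(n-1)}(t)^{\,n-1+4A}$ is exactly what the paper extracts (the paper simply quotes the mean-curvature inequality \cite[(3.15)]{wei-wylie} and bounds the integral term by $4A\,\bs'/\bs$ using $|\psi|\le A$, rather than re-running the Riccati argument), and the remaining polar-coordinates-plus-Gromov's-lemma packaging is the same as in Theorem \ref{rel-vol-comparison-thm}, with your extension-by-zero past the cut locus replacing the paper's truncation casework.
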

\begin{proof}
    For $x,y\in M$, let $\gamma$ be a unit speed minimizing geodesic such that $\gamma(0) = x$ and $\gamma(s) = d(x,y)$, where $s = d(x,y)$. 
    For the unit tangent vector $\xi := \left.\partial_s \gamma(s)\right|_{s=0}$, let $I_\psi(x, s, \xi)$ be the Jacobian of the map $\exp_x : T_xM\rightarrow M$ at $s\xi$ with respect to $\mu$. Then
$$
\d\mu=I_\psi(x, s, \xi) \ \d s \ \d \xi,
$$
where $\d \xi$ is the usual measure on the sphere. 
For $I_\psi$, we have the following inequality in \cite[(3.15)]{wei-wylie}:
    \begin{equation}\label{infty-laplacian-comparison}
        \frac{(I_\psi(x,s,\xi))'}{I_\psi(x,s,\xi)} \leq \sqrt{(n-1)K}\coth \left(\sqrt{\frac{K}{n-1}} s\right) + \frac{1}{\bs_{-K/(n-1)}(s)^2}\int_0^s \{\psi(\gamma(t)) - \psi(\gamma(s))\}(\bs_{-K/(n-1)}(t)^2 )''\ \d t.
    \end{equation}
    Combining this with the assumption $|\psi| \leq A$, we have 
    \begin{equation*}
        \frac{(I_\psi(x,s,\xi))'}{I_\psi(x,s,\xi)} \leq (n-1 + 4A)\frac{(\bs_{-K/(n-1)}(s))'}{\bs_{-K/(n-1)}(s)}.
    \end{equation*} 
    Therefore, we see that $I_\psi(x,s,\xi) / \bs_{-K/(n-1)}(s)^{n-1 + 4A}$ is non-increasing in $s$. Applying this fact instead of \eqref{non_increasing_property} to the argument in Theorem \ref{rel-vol-comparison-thm}, we obtain the desired inequality \eqref{infty-relative-volume-comparison}.
\end{proof}

In the following proof of Theorem \ref{wu-lp-liouville}, we obtain \eqref{lp-lio-eq-3} using Theorem \ref{infty-relative-volume-comparison-thm} and, after that, by directly applying the argument in the proof of Theorem \ref{lp-liouvolle-eps-range}, we obtain Theorem \ref{wu-lp-liouville}. 
Hence, two proofs of Theorem \ref{lp-liouvolle-eps-range} also give two different ways of proving Theorem \ref{wu-lp-liouville} when it is restricted to the case $N = \infty$. 
%We note that these two proofs are essentially different from the proof of Theorem \ref{wu-lp-liouville} in \cite{jy-1} as mentioned in the introduction of this Section 4.

\underline{\textit{Alternative proof of Theorem \ref{wu-lp-liouville}}}

We denote local bounds of $\Ric^\infty_{\psi}$ as 
\begin{equation*}
    \widetilde{K}_\infty (x) = \max\left\{ 0, \sup_{v\in U_xM}\left( -\Ric_\psi^{\infty}(v) \right) \right\} \quad \mbox{and}\quad \widetilde{K}_\infty(q,R) = \sup_{y\in B_q(R)}\widetilde{K}_\infty(y).
\end{equation*}
For $x\in M$ and $\beta > 0$,
we construct the sequence $\{x_i\}_{1\leq i\leq k}$ as in \eqref{x-k-const}.
Applying Theorem \ref{infty-relative-volume-comparison-thm} in the same way as we applied Theorem \ref{rel-vol-comparison-thm} in \eqref{lp-lio-eq-15}, we have 
\begin{equation*}
    V_{x_i}\left(\frac{\beta^i}{20}\right) \geq D_i V_{x_i}(\beta^i,\beta^i + 2\beta^{i-1}) \geq D_i V_{x_{i-1}}\left(\frac{\beta^{i-1}}{20}\right)
\end{equation*}
for $i\geq 1$, where
\begin{equation*}
    D_i = \int_0^{\beta^i/ 20} \bs_{-K/(n-1)}(t)^{n - 1 + 4A}\ \d t\bigg/ \int_{\beta^i}^{\beta^i + 2\beta^{i-1}} \bs_{-K/(n-1)}(t)^{n-1 + 4A}\ \d t,
\end{equation*}
with $K = \widetilde{K}_\infty(x_i,\beta^i + 2\beta^{i-1})$. 
Hence, $D_i$ is approximated by
\begin{equation}\label{d-i-approximation-infty}
    \frac{(\beta^i/20)^{n + 4A}}{(\beta^i + 2\beta^{i-1})^{n + 4A} - (\beta^i)^{n + 4A}} = \frac{(1/20)^{n + 4A}}{(1 + 2/\beta)^{n + 4A} - 1}
\end{equation}
for sufficiently small $\delta$. By taking $\beta$ so that
\begin{equation}\label{beta-dependance}
    \beta > \frac{2}{\left(20^{-(n + 4A)} + 1\right)^{1/(n + 4A)} - 1} > 1
\end{equation}
holds, we can make \eqref{d-i-approximation-infty} larger than $1$. Therefore, by taking $\delta$ sufficiently small so that the approximation \eqref{d-i-approximation-infty} holds, we can make $D_i$ larger than $1$ and we obtain \eqref{lp-lio-eq-3}.
 
The argument after \eqref{lp-lio-eq-3} in the proof of Theorem \ref{lp-liouvolle-eps-range} is applicable to the situation of Theorem \ref{wu-lp-liouville}.
Indeed, under the assumption of Theorem \ref{wu-lp-liouville},
    we fix $\eps\in (-1,1)$, which is the $\eps$-range for $ N = \infty$, and set $a = \exp\left(-\frac{2(1-\eps)}{n-1}A\right)$ and $b = \exp\left(\frac{2(1-\eps)}{n-1}A\right)$. Since we have the relation
    \begin{equation*}
        -\delta \e^{\frac{4(\eps-1)}{n-1}\psi}d(q,x)^{-2} \leq -b^{-2} \delta d(q,x)^{-2},
    \end{equation*}
    the condition $\Ric_\psi^{\infty}\geq -b^{-2} \delta d(q,x)^{-2}$ implies $\Ric_{\psi}^{\infty} \geq -\delta \e^{\frac{4(\eps-1)}{n-1}\psi(x)}d(q,x)^{-2}$. 
    Therefore, we can use the argument of the proof of Theorem \ref{lp-liouvolle-eps-range} under the assumptions of Theorem \ref{wu-lp-liouville} by taking $\delta$ sufficiently small. Hence, we obtain Theorem \ref{wu-lp-liouville}. \qed
%The proof of Theorem \ref{} uses $\eps$-range, but also differs in other respects from the proof of Theorem \ref{} in {\cite{} in It is essentially different from the proof of Theorem \ref{} in \cite{} in other respects too.

\begin{remark}\label{delta-n-a}\rm{
    The original assertion of Theorem \ref{wu-lp-liouville} in \cite[Theorem 6.1]{jy-1} states that the $\delta$ depends only on $n$. However, from \eqref{beta-dependance}, we see that the $\delta$ depends on $n,A$.}
\end{remark}
\begin{remark}\label{remark-elliptic-harnack}\rm{
    In our proof of Theorem \ref{wu-lp-liouville}, while the first half of the proof to deduce \eqref{lp-lio-eq-3} is the same as the proof in \cite{jy-1},
    the second half, which is the argument of the second half of the proof of Theorem \ref{lp-liouvolle-eps-range} for the case $N = \infty$, is essentially different from \cite{jy-1}. 
    %The second half of the proof of Theorem \ref{lp-liouvolle-eps-range} is regarded as the proof of Theorem \ref{wu-lp-liouville} when it is restricted to $N = \infty$.
    In particular, although a mean value inequality is deduced from the elliptic Harnack inequality and used to prove Theorem \ref{wu-lp-liouville} in \cite{jy-1}, our proofs do not go through the elliptic Harnack inequality.
    In addition, the alternative proof of Theorem \ref{lp-liouvolle-eps-range} is also different in that we used the maximum principle.}

    %These proofs are different from that of \cite{jy-1}
    %In \cite{jy-1}, a mean value inequality is deduced from the elliptic Harnack inequality and used to prove Theorem \ref{wu-lp-liouville}. The proof of Theorem \ref{lp-liouvolle-eps-range} in this paper differs from that of \cite{jy-1} in that it does not go through the elliptic Harnack inequality.}
\end{remark}
Although in Theorem \ref{lp-liouvolle-eps-range}, we make an assumption such as \eqref{odd-assumption}, this does not affect the $L^p$-Liouville theorems for $N\in [n,\infty]$, which are the main result of this section.

\section{Liouville type theorem for harmonic functions of sublinear growth under non-negative weighted Ricci curvature}
In this section, we prove the Liouville type theorem for harmonic functions of sublinear growth under non-negative weighted Ricci curvature with bounded weight function.
Although we considered the local curvature bounds $K_{\eps}(q,R)$, $\widetilde{K}_\infty(q,R)$ in previous sections, from now on, we only consider global curvature bounds and do not use the notation $K_{\eps}(q,R).$
We first show some functional inequalities under non-negative weighted Ricci curvature with bounded weight function.
\begin{lemma}(Neumann-Poincar\'{e} inequality)
    Let $(M,g,\mu)$ be an $n$-dimensional complete weighted Riemannian manifold and $N \in (-\infty,1] \cup [n,+\infty]$. We assume $\Ric_\psi^N \geq 0$ and $|\psi| \leq h$ for some positive constant $h$.
    Then, there exists a constant $\widetilde{C}_p$ depending on $h,n,N$ such that 
    \begin{equation}
    \forall f \in C^{\infty}(M), \quad \int_{B_q(R)}|f - f_{B_q(R)}|^2\  \d\mu \leq \widetilde{C}_p R^2\int_{B_q(2R)}|\nabla f|^2\  \d\mu
    \end{equation}
    for all $q\in M$ and $R > 0$.
\end{lemma}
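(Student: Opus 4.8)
The plan is to obtain this as a degenerate case of the $\eps$-range Neumann--Poincar\'e inequality of Theorem \ref{poincare_thm}. The crucial point is that $\Ric_\psi^N \ge 0$ is, trivially, a stronger condition than the variable curvature bound $\Ric_\psi^N(v) \ge -K\e^{\frac{4(\eps-1)}{n-1}\psi(x)}g(v,v)$ for \emph{every} $K > 0$ and every admissible $\eps$. I would therefore fix $\eps = 0$, which lies in the $\eps$-range \eqref{epsilin-range} for every admissible $N$ (it is forced when $N=1$, it satisfies $|\eps| = 0 < \sqrt{(N-1)/(N-n)}$ when $N \neq 1,n$, and it is allowed when $N = n$); with this choice the constant $c$ of \eqref{c_num} equals $(n-1)^{-1}$, independent of $N$.

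Next I would convert $|\psi| \le h$ into the two-sided bound demanded by Theorem \ref{poincare_thm}: with $\eps = 0$ one has $\e^{-\frac{2(\eps-1)}{n-1}\psi} = \e^{\frac{2}{n-1}\psi}$, so setting $a := \exp(-2h/(n-1))$ and $b := \exp(2h/(n-1))$ gives $a \le \e^{-\frac{2(\eps-1)}{n-1}\psi} \le b$, with $a,b$ depending only on $h,n$. Then, for any $K > 0$, Theorem \ref{poincare_thm} applied with radius $r = R$ yields
\[
\int_{B_q(R)}|f-f_{B_q(R)}|^2\,\d\mu \;\le\; 2^{n+3}\Big(\frac{2b}{a}\Big)^{1/c}\exp\!\Big(\sqrt{\frac{K}{c}}\,\frac{2R}{a}\Big)R^2\int_{B_q(2R)}|\nabla f|^2\,\d\mu
\]
for every $f \in C^\infty(M)$. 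Finally I would let $K \downarrow 0$: the exponential factor tends to $1$ and neither side otherwise depends on $K$, so the inequality holds with $\widetilde{C}_p := 2^{n+3}(2b/a)^{1/c} = 2^{2n+2}\e^{4h}$, which depends only on $h$ and $n$ (a fortiori only on $h,n,N$).

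I do not expect a genuine obstacle: the whole content is that non-negative weighted Ricci curvature collapses the $\eps$-range variable bound into the constant bound $K=0$, so the $K$-dependent exponential in Theorem \ref{poincare_thm} washes out in the limit. The only two points deserving a line of care are (i) verifying that $\eps = 0$ really lies in \eqref{epsilin-range} for every admissible $N$ (the short case check indicated above), and (ii) justifying the passage $K \to 0^+$, which is immediate since the right-hand side is continuous in $K$ and the left-hand side is independent of it. One could instead quote Theorem \ref{modified-neumann-poincare-thm} directly with $K_\eps(q,2R) = 0$ --- valid because $\Ric_\psi^N \ge 0$ forces $K_\eps \equiv 0$ --- but that version is phrased for $\varphi \in C_0(B_q(R))$, whereas going through Theorem \ref{poincare_thm} gives the inequality for all of $C^\infty(M)$ at once.
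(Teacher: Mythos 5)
Your proposal is correct and follows essentially the same route as the paper: interpret $\Ric_\psi^N \ge 0$ together with $|\psi|\le h$ as a lower curvature bound in the $\eps$-range (the paper picks any $|\eps|<\min\{1,\sqrt{(N-1)/(N-n)}\}$, you fix $\eps=0$) and invoke Theorem \ref{poincare_thm}. Your explicit limit $K\downarrow 0$ is in fact a slightly more careful handling of the $K>0$ hypothesis than the paper's tacit use of the degenerate $K=0$ case, and your constant $2^{2n+2}\e^{4h}$ is consistent with it.
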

\begin{proof}
    We choose a constant $\eps$ satisfying
    \begin{equation}\label{choose-epsilon}
        |\eps| < \min\left\{1,\sqrt{\frac{N-1}{N-n}}\right\}
    \end{equation}
    for $N\neq 1$ and $\eps = 0$ for $N = 1$.
    We set 
    \begin{equation*}
        a := \exp\left( -\frac{2(1-\eps)}{n-1}h \right),\quad b := \exp\left(\frac{2(1-\eps)}{n-1}h\right).
    \end{equation*}
    Since $-h \leq \psi \leq h$, we have 
    \begin{equation}\label{measure-pinching-2}
        a\leq \exp\left(\frac{2(1-\eps)}{n-1}\psi\right) \leq b.
    \end{equation}
    This pinching condition for the weight function and non-negativity of the $\Ric_\psi^N$ can be considered as the condition of the lower boundedness of $\Ric_\psi^N$ with $\eps$-range (note that the variable curvature bound degenerates to the constant curvature bound when we consider the non-negative curvature bound). 
    Hence, by Theorem \ref{poincare_thm}, we obtain the desired inequality.
\end{proof}

The local Sobolev type inequality, Poincar\'{e} type inequality and the modified local Sobolev type inequality are also obtained under the non-negativity of $\Ric_{\psi}^N$ and the boundedness of the weight function by regarding this assumption as a lower bound of $\Ric_\psi^N$ with $\eps$-range as above and applying the arguments in Section 3.
 From these inequalities, we derive the following mean value inequality.
\begin{theorem} (Mean  value inequality)\label{mean-ineq}
    Let $(M, g, \mu)$ be an $n$-dimensional complete weighted Riemannian manifold and $N \in (-\infty,1] \cup [n,+\infty]$. We assume $\Ric_\psi^N \geq 0$ and $|\psi | \leq h$ for some constant $h > 0$.
    Let $u$ be a smooth non-negative function satisfying $\Delta_\psi u \geq 0$.
    Given constants $0 < \theta < 1$ and $p > 0$, there exists a positive constant $C_{13}$ depending on $n,h,N, p, \theta$ such that 
    \begin{equation}%\label{mean-value-inequality-sub}
        \|u\|_{{\infty},\theta R} \leq C_{13} \left( \frac{1}{V_q(\theta R)}\int_{B_q(R)}u^p\  \d \mu \right)^{\frac{1}{p}}.
    \end{equation}
\end{theorem}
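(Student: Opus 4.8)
The plan is to observe that the hypotheses $\Ric_\psi^N\ge 0$ and $|\psi|\le h$ amount to a lower bound of $\Ric_\psi^N$ with $\eps$-range whose local curvature bound $K_\eps$ is identically $0$, and then to quote the mean value inequality of Theorem \ref{mean-eq-thm} in this degenerate situation so that all exponential factors become trivial. Concretely, I would first fix $\eps$ satisfying \eqref{choose-epsilon} (and $\eps=0$ when $N=1$), exactly as in the preceding Neumann--Poincar\'e lemma, and set $a:=\exp(-\tfrac{2(1-\eps)}{n-1}h)$ and $b:=\exp(\tfrac{2(1-\eps)}{n-1}h)$, so that the pinching $a\le \e^{-\frac{2(\ez-1)}{n-1}\psi}\le b$ holds by \eqref{measure-pinching-2}; note that $a$, $b$ and the constant $c$ from \eqref{c_num} then depend only on $n,h,N$. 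Since $\Ric_\psi^N(v)\ge 0$ for all $v$, we have $-\e^{\frac{-4(\eps-1)}{n-1}\psi(x)}\Ric_\psi^N(v)\le 0$, whence $K_\eps(x)=0$ for every $x$ and therefore $K_\eps(q,R)=0$ for all $q\in M$ and $R>0$.

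Next I would use the sign of $u$: since $u\ge 0$, the inequality $\Delta_\psi u\ge 0$ implies $\Delta_\psi u\ge -\tfrac{1}{R^2}u$, so Theorem \ref{mean-eq-thm} applies with $A=R^{-2}$ (here $R$ is fixed, so $A$ is a genuine positive constant). Because $K_\eps(q,10R)=0$, the factor $\exp(\widetilde D_3\sqrt{K_\eps(q,10R)}\,R)$ equals $1$ and $AR^2=1$, so \eqref{mean-value-inequality-k} reduces to
\[
\|u\|_{\infty,\theta R}\le \widetilde E_3\left(1+\frac{16}{(1-\theta)^2}\right)^{\frac{\nu}{2p}}\left(\frac{1}{V_q(\theta R)}\int_{B_q(R)}u^p\,\d\mu\right)^{\frac{1}{p}}.
\]
Setting $C_{13}:=\widetilde E_3\bigl(1+16(1-\theta)^{-2}\bigr)^{\nu/(2p)}$ gives the asserted inequality, and since $\widetilde E_3$ and $\nu$ depend only on $p,c,a,b,n$ — hence on $n,h,N,p$ — the constant $C_{13}$ depends only on $n,h,N,p,\theta$, as claimed.

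I expect no serious obstacle here; this is essentially a specialization of Theorem \ref{mean-eq-thm}. The two points that require a little care are (i) the passage from $\Delta_\psi u\ge 0$ to a bound of the form $\Delta_\psi u\ge -Au$ with $A>0$, which uses $u\ge 0$ crucially and lets one invoke Theorem \ref{mean-eq-thm} verbatim rather than re-running the Moser iteration with $A=0$; and (ii) the bookkeeping of constants, namely checking that $\eps$ — and with it $a$, $b$, $c$ — can be chosen purely in terms of $n,h,N$, so that the final constant carries no hidden dependence on $R$ or on $u$.
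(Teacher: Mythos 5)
Your proposal is correct and follows essentially the same route the paper intends: viewing $\Ric_\psi^N\ge 0$ together with $|\psi|\le h$ as a lower curvature bound with $\eps$-range for which $K_\eps\equiv 0$, and then specializing the Moser-iteration mean value inequality of Theorem \ref{mean-eq-thm} so that all exponential factors are trivial and the constant depends only on $n,h,N,p,\theta$. Your device of taking $A=R^{-2}$ (legitimate since $u\ge 0$ gives $\Delta_\psi u\ge 0\ge -R^{-2}u$, and $AR^2=1$ is $R$-independent) is a clean way to invoke the theorem verbatim despite its requirement that $A$ be positive.
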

Using this mean value inequality, we show the following Liouville-type theorem for harmonic functions of sublinear growth. This is a generalization of Theorem \ref{wang-sublinear}.
We remark that $N\in[0,1]$ is excluded since we use the Bochner inequality.
\begin{theorem}\label{liouville-negative}
    Let $(M, g, \mu)$ be an $n$-dimensional complete weighted Riemannian manifold and $N \in (-\infty,0) \cup [n,+\infty]$.
    We assume $\Ric_\psi^N \geq 0$ and $|\psi|\leq h$ for some positive constant $h > 0$. 
    Let $u$ be a smooth $\psi$-harmonic function of sublinear growth.
    Then, $u$ is necessarily constant.
\end{theorem}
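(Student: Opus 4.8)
The plan is to follow the strategy behind Theorem~\ref{wang-sublinear}: use the Bochner formula to show that $w:=|\nabla u|^2$ is a non-negative $\psi$-subharmonic function, bound its integral average over $B_q(2R)$ in terms of the sublinear growth of $u$ by a Caccioppoli-type estimate, and then combine the mean value inequality (Theorem~\ref{mean-ineq}) with volume doubling (Theorem~\ref{bishop_gromov} with $K=0$) to force $\|w\|_{\infty,R}\to 0$ as $R\to\infty$, whence $\nabla u\equiv 0$.

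\emph{Step 1 (Bochner).} Since $\Delta_\psi u=0$ we have $\Delta u=\langle\nabla\psi,\nabla u\rangle$, and the weighted Bochner identity reads
\[
\tfrac12\,\Delta_\psi|\nabla u|^2=|\nabla^2 u|^2+\Ric_\psi^N(\nabla u)+\frac{(\Delta u)^2}{N-n},
\]
where the last term is read as $0$ when $N=\infty$. If $N\in[n,\infty]$ then $(N-n)^{-1}\ge 0$, so $\Delta_\psi w\ge 2\Ric_\psi^N(\nabla u)\ge 0$. If $N\in(-\infty,0)$, combine the trace bound $|\nabla^2 u|^2\ge n^{-1}(\Delta u)^2$ with $(N-n)^{-1}<0$ to get $|\nabla^2 u|^2+(N-n)^{-1}(\Delta u)^2\ge \frac{N}{n(N-n)}(\Delta u)^2$, and note that $N<0$ makes $\frac{N}{n(N-n)}>0$; hence again $\Delta_\psi w\ge 2\Ric_\psi^N(\nabla u)\ge 0$. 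In all admissible cases $w=|\nabla u|^2$ is smooth, non-negative and $\psi$-subharmonic. This is exactly where $N\in[0,n)$ must be excluded, since there the sign of $\frac{N}{n(N-n)}$ is no longer under control.

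\emph{Step 2 (Caccioppoli plus mean value).} Pick $\phi\in C_0^\infty(B_q(4R))$ with $\phi\equiv 1$ on $B_q(2R)$ and $|\nabla\phi|\le C/R$. Integrating $\Delta_\psi u=0$ against $\phi^2 u$ and using weighted integration by parts gives $\int_{B_q(2R)}|\nabla u|^2\,\d\mu\le \frac{C}{R^2}\int_{B_q(4R)}u^2\,\d\mu$. With $\Lambda(R):=\sup_{B_q(R)}|u|$, which satisfies $\Lambda(R)=o(R)$ by sublinear growth, this yields $\int_{B_q(2R)}w\,\d\mu\le C\,R^{-2}\Lambda(4R)^2\,V_q(4R)$. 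Now apply Theorem~\ref{mean-ineq} to $w$ with $p=1$, $\theta=\tfrac12$ on the ball $B_q(2R)$, together with Theorem~\ref{bishop_gromov} under $\Ric_\psi^N\ge 0$ with $\eps$-range, which for $K=0$ gives a scale-invariant doubling estimate $V_q(4R)/V_q(R)\le C_D$; as in Section~5 the hypothesis is used by fixing an admissible $\eps$ and encoding $|\psi|\le h$ as a pinching of $\e^{-\frac{2(\eps-1)}{n-1}\psi}$ between constants $a\le b$. Combining,
\[
\|w\|_{\infty,R}\le \frac{C}{V_q(R)}\int_{B_q(2R)}w\,\d\mu\le C\,C_D\,\frac{\Lambda(4R)^2}{R^2},
\]
which tends to $0$ as $R\to\infty$ since $\Lambda(4R)/R\to0$. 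Fixing $x\in M$ and letting $R\to\infty$ with $x\in B_q(R)$ gives $|\nabla u(x)|^2=0$, so $\nabla u\equiv 0$ and $u$ is constant.

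\emph{Main obstacle.} The delicate point is Step~1 for $N<0$: because $(N-n)^{-1}<0$, one cannot simply invoke $\Ric_\psi^\infty\ge 0$ and the $N=\infty$ Bochner formula as in \cite{munteau_wang-1} --- indeed $\Ric_\psi^N\ge 0$ need not imply $\Ric_\psi^\infty\ge 0$ when $N<0$ --- so one must verify by hand that the curvature--dimension splitting still leaves a non-negative right-hand side, which it does precisely in the range $N<0$. Everything after that is a routine assembly of the mean value inequality, a Caccioppoli estimate, and the $K=0$ volume doubling furnished by the $\eps$-range Bishop--Gromov theorem.
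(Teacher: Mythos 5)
Your proposal is correct and follows essentially the same route as the paper: the $N$-weighted Bochner inequality (which you rederive by hand via the trace inequality, checking the sign $\tfrac{N}{n(N-n)}>0$ for $N<0$, whereas the paper simply quotes \eqref{weighted_bochner_N}) shows $|\nabla u|^2$ is $\psi$-subharmonic, and then the mean value inequality of Theorem \ref{mean-ineq}, a Caccioppoli estimate with a cut-off, the $K=0$ Bishop--Gromov doubling under the $\eps$-range encoding of $|\psi|\le h$, and sublinear growth force $\nabla u\equiv 0$. The differences (order of the mean-value and Caccioppoli steps, choice of radii) are cosmetic.
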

\begin{proof}
    We apply the argument in \cite[Theorem 3.2]{munteau_wang-1}.
Let $u$ be a $\psi$-harmonic function, which is of sublinear growth, i.e.,
\begin{equation}\label{sublinear-def}
    \lim _{d(q,x) \rightarrow \infty} \frac{|u(x)|}{d(q,x)}=0
\end{equation}
for some $q\in M$.
Combining $\Delta_\psi u = 0$ with the $N$-weighted Bochner inequality:
\begin{equation}\label{weighted_bochner_N}
    \Delta_{\psi}\left(\frac{|\nabla u|^2}{2}\right)-\left\langle\nabla \Delta_{\psi} u, \nabla u\right\rangle \geq \operatorname{Ric}_\psi^N(\nabla u)+\frac{\left(\Delta_{\psi} u\right)^2}{N},
\end{equation}
 we obtain
\begin{equation*}
    \Delta_\psi \left(\frac{|\nabla u|^2}{2}\right) \geq\Ric_\psi^N(\nabla u) \geq 0.
\end{equation*}
Therefore, we see that $|\nabla u|^2$ is $\psi$-subharmonic. 
Applying Theorem \ref{mean-ineq} to $|\nabla u|^2$, we obtain 
\begin{equation}\label{mean-val-ineq-sublinear}
    \sup _{B_q\left( R/2\right)}|\nabla u|^2 \leq \frac{C_{14}}{V_q\left(R/2\right)} \int_{B_q(R)}|\nabla u|^2 \ \d \mu
\end{equation}
for some constant $C_{14}$ depending on $n,h,N$. 

We estimate the right-hand side of \eqref{mean-val-ineq-sublinear}.
We set a cut-off function $\phi$ such that $\phi=1$ on $B_q(R), \phi=0$ on $M \setminus B_q(2 R)$ and $|\nabla \phi| \leq \frac{2}{R}$.
From the definition of $\phi$, we get
\begin{eqnarray}
    \int_M|\nabla u|^2 \phi^2\  \d \mu & =& -2 \int_M u \phi\langle\nabla u, \nabla \phi\rangle \ \d \mu\nonumber \\
& \leq& 2 \int_M|u| | \phi| |\langle\nabla u, \nabla \phi\rangle| \  \d \mu\nonumber \\
& \leq& \frac{1}{2} \int_M|\nabla u|^2 \phi^2 \ \d \mu+2 \int_M u^2|\nabla \phi|^2\  \d \mu\label{main-thm-eq-1} ,
\end{eqnarray}
where we used the integration by parts and $\Delta_\psi u = 0$ in the first inequality.
Combining this with the volume growth estimate as in Theorem \ref{bishop_gromov}, we have
\begin{eqnarray}
\int_{B_q(R)}|\nabla u|^2 \ \d \mu & \leq& 4 \int_M u^2|\nabla \phi|^2 \ \d \mu\nonumber \\
&\leq& \frac{16}{R^2} \int_{B_q(2 R) \setminus B_q(R)} u^2 \ \d \mu\nonumber \\
& \leq& \frac{16}{R^2}\left(\sup _{B_q(2 R)} u^2\right) V_q(2 R) \label{main-thm-eq-3}\\
& \leq& \frac{C_{15}}{R^2} \left(\sup _{B_q(2 R)} u^2\right) V_q(R/2)\label{main-thm-eq-2},
\end{eqnarray}
where $C_{15}$ is a constant depending on $n,h,N$.  
From \eqref{main-thm-eq-2}, we can estimate the right-hand side of \eqref{mean-val-ineq-sublinear} as
\begin{eqnarray}
    \frac{1}{V_q(R/2)}\int_{B_q(R)}|\nabla u|^2 \ \d \mu%& \leq& \frac{1}{V_q(R/2)}\int_{M}|\nabla u|^2 \phi^2\  \d\mu\nonumber\\
    &\leq& \frac{C_{15}}{R^2} \left(\sup _{B_q(2 R)} u^2\right)\label{final-eq}.
\end{eqnarray}
Since $u$ is of sublinear growth, we see that the right-hand side of \eqref{final-eq} goes to $0$ as $R\rightarrow \infty$. Combining this with \eqref{mean-val-ineq-sublinear}, we have $|\nabla u| = 0$ and the theorem follows.
\end{proof} 
The fact that the variable curvature bound with $\eps$-range \eqref{eps-range-intro} becomes the non-negative curvature bound when $K = 0$ enables us to use the functional inequalities with $\eps$-range obtained in Section 3.
Hence, we could apply the argument in \cite{munteau_wang-1} to obtain Theorem \ref{liouville-negative}.

\section{Gradient estimate}
In this section, we prove the following gradient estimate of harmonic functions under lower bounds of $\Ric_\psi^N$ with $\eps$-range.
This is an alternative version of Theorem \ref{grad-est-thm}.
\begin{theorem}\label{grad-est-eps-range}
    Let $(M,g,\mu)$ be an $n$-dimensional complete weighted Riemannian manifold and $N \in (-\infty,0) \cup [n,\infty]$,
    $\ez \in \R$ in the $\ez$-range \eqref{epsilin-range}, $K \geq 0$ and $b \ge a>0$.
    Assume that
    \[ \Ric_\psi^N(v)\ge -K\e^{\frac{4(\ez-1)}{n-1}\psi(x)} g(v,v) \]
    holds for all $v \in T_xM \setminus 0$ and
    \begin{equation}\label{measure-pinching-3}
    a \le \e^{-\frac{2(\ez-1)}{n-1}\psi} \le b.
    \end{equation}
    Let $u$ be a smooth positive function satisfying $\Delta_\psi u = 0$.
    Then there exists a constant $C_{16}$ depending on $c,a,b,n,K$ such that
   $$
   |\nabla \log u| \leq C_{16}.
   $$
\end{theorem}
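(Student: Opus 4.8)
The plan is to adapt the argument of \cite{soliton} underlying Theorem~\ref{grad-est-thm}: combine an elliptic Harnack inequality with the $N$-weighted Bochner inequality \eqref{weighted_bochner_N}, now under the $\eps$-range bound and using the functional inequalities of Section~3. Fix an arbitrary $x_0\in M$; it suffices to bound $|\nabla\log u|(x_0)$ by a constant depending only on $c,a,b,n,K$. First I would record the elliptic Harnack inequality for $\psi$-harmonic functions in the $\eps$-range setting: the mean value inequalities in the spirit of Theorems~\ref{sub-lemma} and~\ref{sup-lemma} are available for $\Delta_\psi$ from Theorem~\ref{mean-eq-thm} and the Neumann--Poincar\'e inequality Theorem~\ref{modified-neumann-poincare-thm} (together with the volume doubling of Theorem~\ref{bishop_gromov}), so the same combination as in Theorem~\ref{elliptic-harnack}, applied with $A=0$ since $\Delta_\psi u=0$, gives $\sup_{B_{x_0}(1)}u\le C_H\inf_{B_{x_0}(1)}u$ with $C_H=C_H(c,a,b,n,K)$. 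After rescaling so that $u(x_0)=1$, we then have $C_H^{-1}\le u\le C_H$ on $B_{x_0}(1)$.

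Next I set $v:=\log u$, so $|v|\le\log C_H$ on $B_{x_0}(1)$ and $\Delta_\psi v=-|\nabla v|^2$. Writing $w:=|\nabla v|^2=|\nabla\log u|^2$ and applying \eqref{weighted_bochner_N} to $v$, substituting $\Delta_\psi v=-w$ and the curvature hypothesis together with $\e^{\frac{4(\eps-1)}{n-1}\psi}\le a^{-2}$ (which follows from \eqref{measure-pinching-3}), I obtain
\begin{equation*}
    \Delta_\psi w+2\langle\nabla v,\nabla w\rangle\ \ge\ \frac{2}{N}\,w^2-\frac{2K}{a^2}\,w .
\end{equation*}
The left-hand side is $\widetilde\Delta w$, where $\widetilde\Delta:=\Delta_\psi+2\langle\nabla v,\nabla\cdot\rangle=\Delta_{\psi-2v}$ is the weighted Laplacian of the measure $\widetilde\mu:=u^2\mu$. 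Because $u$ is comparable to a constant on $B_{x_0}(1)$ by Harnack, $\widetilde\mu$ and $\mu$ are comparable there up to a power of $C_H$, so the local Sobolev and Poincar\'e inequalities of Section~3 pass to $(\widetilde\Delta,\widetilde\mu)$ with constants inflated only by powers of $C_H$; hence the Moser iteration behind Theorem~\ref{mean-eq-thm} runs for $\widetilde\Delta$. I would then apply it to the subsolution $w$ and control the resulting average through the Caccioppoli inequality for the $\psi$-harmonic function $u$, for which the zeroth-order term vanishes:
\begin{equation*}
    \int_{B_{x_0}(r)}w\,u^2\,\d\mu=\int_{B_{x_0}(r)}|\nabla u|^2\,\d\mu\ \le\ \frac{C}{r^2}\int_{B_{x_0}(2r)}u^2\,\d\mu\ \le\ \frac{C\,C_H^2}{r^2}\,V_{x_0}(2r),
\end{equation*}
which after dividing by $\widetilde\mu(B_{x_0}(r))\ge C_H^{-2}V_{x_0}(r)$ and using volume doubling bounds $\bigl(\widetilde\mu(B_{x_0}(r))^{-1}\int_{B_{x_0}(r)}w\,\d\widetilde\mu\bigr)$ by $C\,C_H^4 r^{-2}$.

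When $N\in[n,+\infty]$ the term $\tfrac{2}{N}w^2$ has a favorable sign, so $\widetilde\Delta w\ge-\tfrac{2K}{a^2}w$ with a constant zeroth-order coefficient; taking $r=\tfrac12$ above gives $w(x_0)\le\sup_{B_{x_0}(1/2)}w\le C\,C_H^4$, hence $|\nabla\log u|(x_0)\le C_{16}$. The main obstacle is $N\in(-\infty,0)$, where $\tfrac{2}{N}w^2=-\tfrac{2}{|N|}w^2$ is superlinear with unfavorable sign, so $w$ is only a subsolution of $\widetilde\Delta w\ge-\bigl(\tfrac{2}{|N|}w+\tfrac{2K}{a^2}\bigr)w$, whose zeroth-order coefficient contains $w$ itself; this is exactly why $N\in[0,n)$, for which \eqref{weighted_bochner_N} fails, must be excluded. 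I expect handling this term to be the delicate step. The plan is to run the mean value inequality on balls $B_{x_0}(r)$ of radius $r\le\tfrac12$ chosen so that $\tfrac{2}{|N|}\bigl(\sup_{B_{x_0}(1)}w\bigr)r^2$ stays bounded, so that the effective coefficient times $r^2$ is under control, and then to combine the resulting bound $w(x_0)\le C\,C_H^4 r^{-2}$ with the Caccioppoli estimate and a bootstrap over a chain of balls (or a maximum-principle comparison) to eliminate the a priori dependence on $\sup w$ and recover a universal constant. Since $x_0$ was arbitrary and every constant above depends only on $c,a,b,n,K$, the bound $|\nabla\log u|\le C_{16}$ follows.
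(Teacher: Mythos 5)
Your first step (the elliptic Harnack inequality for $u$ via the mean value inequality of Theorem \ref{mean-eq-thm} together with a Li-type small-$k$ inequality based on the Neumann--Poincar\'e inequality) matches the paper. The divergence, and the problem, is in the second half. The paper never passes to $v=\log u$: it applies the Bochner inequality \eqref{weighted_bochner_N} to $u$ itself, where $\Delta_\psi u=0$ makes the term $(\Delta_\psi u)^2/N$ vanish identically, so $|\nabla u|^2$ is a $\psi$-subsolution with the \emph{constant} zeroth-order coefficient $2K/a^2$ with respect to $\Delta_\psi$ and $\mu$. Theorem \ref{mean-eq-thm} applied to $|\nabla u|^2$, a Caccioppoli estimate ($\int\phi^2|\nabla u|^2\,\d\mu\le 4\int u^2|\nabla\phi|^2\,\d\mu$) and volume doubling give $|\nabla u|(x)\le C\sup_{B_x(1/4)}u$, and the Harnack inequality converts the right-hand side into $C\,u(x)$. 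In this route the sign of $N$ is irrelevant (one only needs \eqref{weighted_bochner_N}, i.e. $N\notin(0,n)$), no conjugated operator $\widetilde\Delta$ appears, and no quadratic term ever has to be absorbed.

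By contrast, your route through $w=|\nabla\log u|^2$ produces the term $\tfrac{2}{N}w^2$, and for $N\in(-\infty,0)$ your plan for handling it is not an argument but a restatement of the difficulty: choosing $r$ so that $\tfrac{2}{|N|}\bigl(\sup_{B_{x_0}(1)}w\bigr)r^2$ is bounded only yields $w(x_0)\le C\,C_H^4\,r^{-2}\sim C\sup_{B_{x_0}(1)}w$, which is circular and cannot be de-looped by chaining balls, since every ball in the chain carries the same a priori quantity $\sup w$; Moser iteration has no mechanism for absorbing a superlinear zeroth-order term of the unfavorable sign, and the Cheng--Yau maximum-principle alternative relies precisely on that quadratic term having the favorable sign, which it has only for $N\in[n,\infty]$. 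So as written your proof covers only $N\in[n,\infty]$ (where it is correct in outline, though more elaborate than necessary because of the conjugation to $\widetilde\mu=u^2\mu$), and the case $N\in(-\infty,0)$ — the genuinely new range in this theorem — is left with a gap. The fix is exactly the paper's observation: run Bochner on $u$, not on $\log u$, so that $\psi$-harmonicity kills the $N$-dependent term before any sign issue can arise.
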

\begin{proof}
    We apply the argument in \cite[Theorem 3.1]{soliton}.
    %From the definition of the $\eps$-range, we have $|\eps| < 1$. From \eqref{measure-pinching-3}, we get
    %\begin{equation*}
    %    \frac{n-1}{2(1-\eps)}\log a \leq \psi \leq \frac{n-1}{2(1-\eps)}\log b.
    %\end{equation*}
    %Hence, we find
    %\begin{equation*}
    %    \sup_{y\in B_x(1)}|\psi(y) - \psi(x)| \leq \frac{n-1}{1-\eps}\log b.
    %\end{equation*}
 From the $N$-weighted Bochner inequality \eqref{weighted_bochner_N}, we deduce
 \begin{equation}\label{grad-est-eq-1}
    \Delta_{\psi}\left(\frac{|\nabla u|^2}{2}\right)-\left\langle\nabla \Delta_{\psi} u, \nabla u\right\rangle  \geq \operatorname{Ric}_\psi^N(\nabla u)+\frac{\left(\Delta_{\psi} u\right)^2}{N} = \Ric_\psi^N(\nabla u) \geq -\frac{K}{a^2}|\nabla u|^2.
\end{equation}
Applying Theorem \ref{mean-eq-thm}, there exists a constant $C_{17}$ depending on $c,a,b,n,K$ such that the following mean value inequality holds:
\begin{equation}\label{grad-est-eq-3}
    \sup _{B_x\left(1/16\right)}|\nabla u|^2 \leq \frac{C_{17}}{V_x\left(1/16\right)} \int_{B_x\left(1/8\right)}|\nabla u|^2 \ \d\mu
\end{equation}
for arbitrary $x \in M$. We estimate the right-hand side of \eqref{grad-est-eq-3}.
Now, let $\phi$ be a cut-off function with support in $B_x\left(\frac{1}{4}\right)$ such that $\phi=1$ on $B_x\left(\frac{1}{8}\right)$ and $|\nabla \phi| \leq 16$. 
\begin{comment}
Using $\Delta_\psi u=0$, we have
$$
\begin{aligned}
\int_M|\nabla u|^2 \phi^2 \ \d\mu & =-2 \int_M u \phi\langle\nabla u, \nabla \phi\rangle \ \d\mu \\
& \leq \frac{1}{2} \int_M|\nabla u|^2 \phi^2 \ \d\mu+2 \int_M u^2|\nabla \phi|^2 \ \d\mu.
\end{aligned}
$$
Hence, we find
$$
\int_M|\nabla u|^2 \phi^2 \ \d\mu \leq 4 \int_M u^2|\nabla \phi|^2 \ \d\mu.
$$
Therefore, we get
\begin{eqnarray*}
\frac{1}{V_x\left(1/8\right)} \int_{B_x\left(1/8\right)}|\nabla u|^2 \ \d\mu &\leq& \frac{1}{V_x\left(1/8\right)} \int_{M} \phi^2|\nabla u|^2 \ \d\mu \\
&\leq& \frac{4}{V_x(1/8)}\int_M u^2|\nabla \phi|^2 \ \d\mu\\
&\leq& \frac{4 \cdot 16^2}{V_x(1/8)}\int_{B_x(1/4)}u^2\ \d\mu\\
&\leq& \frac{4\cdot 16^2 V_x(1/4)}{V_x(1/8)}\left(\sup _{B_x\left(1/4\right)} u\right)^2.
\end{eqnarray*}
\end{comment}
Applying the same argument as the one for showing \eqref{main-thm-eq-3} for $R = \frac{1}{8}$, we have 
\begin{equation*}
     \int_{B_x\left(1/8\right)}|\nabla u|^2 \ \d\mu \leq 16\cdot 8^2 V_x(1/4)\left(\sup _{B_x\left(1/4\right)} u\right)^2.
\end{equation*}
Combining this with \eqref{grad-est-eq-3} and the volume growth estimate in Theorem \ref{bishop_gromov}, we get
\begin{equation}\label{grad-est-eq-2}
    |\nabla u|(x) \leq C_{18} \sup _{B_x\left(1/4\right)} u ,
\end{equation}
where $C_{18}$ is a constant depending on $c,a,b,n,K$.

On the other hand, we have the following mean value inequality:
\begin{equation}\label{mean-val-eq-for-sup-2}
    \left( \frac{1}{V_x(1/2)}\int_{B_x(1/2)} u^k \ \d\mu\right)^{\frac{1}{k}}  \leq \widetilde{C}_{sup} \inf_{B_x(1/4)}u
\end{equation}
for sufficiently small $k > 0$.
This inequality is proved by applying the argument in \cite[Lemma 11.2]{p_li} (Theorem \ref{sup-lemma}).
We remark that, although the argument to prove Theorem \ref{sup-lemma} requires the lower Neumann-Poincar\'{e} eigenvalue bound \eqref{p-li-neu-poincare}, by using the Neumann-Poincar\'{e} inequality in Theorem \ref{poincare_thm} instead, we can follow the argument in \cite[Lemma 11.2]{p_li} to prove \eqref{mean-val-eq-for-sup-2}.
From Theorem \ref{mean-eq-thm}, we also have the following mean value inequality:
\begin{equation*}
    \|u\|_{\infty,\frac{1}{4}}\leq \widetilde{C}_{sub}\left( \frac{1}{V_x(1/4)}\int_{B_x(1/2)} u^k \ \d\mu\right)^{\frac{1}{k}}.
\end{equation*}
Combining this with \eqref{mean-val-eq-for-sup-2}, we obtain the following elliptic Harnack inequality:
$$
\sup _{B_x\left(1/4\right)} u \leq \widetilde{C}_{H} \inf _{B_x\left(1/4\right)} u,
$$
where $\widetilde{C}_{H}$ is a constant depending only on $c,a,b,n,K$. Combining this with \eqref{grad-est-eq-2}, we obtain
$$
|\nabla u|(x) \leq C_{18}\widetilde{C}_{H} u(x).
$$
Therefore, the theorem follows.
\end{proof}
We note that the following simple corollary holds.
\begin{corollary}
    Let $(M,g,\mu)$ be an $n$-dimensional complete weighted Riemannian manifold and $N\in (-\infty,0)\cup [n,\infty]$. We assume that $\Ric_{\psi}^N\geq 0$ and $|\psi| \leq h$ for some positive constant $h$.
    Let $u$ be a smooth positive function satisfying $\Delta_\psi u = 0$. Then, there exists $C_{19}$ depending on $n,h,N$ such that 
    \begin{equation*}
        |\nabla\log u|\leq C_{19}.
    \end{equation*}
\end{corollary}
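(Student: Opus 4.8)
The plan is to derive this corollary directly from Theorem~\ref{grad-est-eps-range} by reducing the hypothesis ``$\Ric_\psi^N \geq 0$ with $|\psi| \leq h$'' to the hypothesis of that theorem, namely a lower bound of $\Ric_\psi^N$ with $\eps$-range together with a two-sided pinching of the conformal weight $\e^{-\frac{2(\eps-1)}{n-1}\psi}$. This is exactly the same reduction already carried out in the Neumann--Poincar\'e lemma at the start of Section~5, so the argument is short.

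First I would fix an $\eps$ in the $\eps$-range for $N$: choose $\eps = 0$ when $N = 1$ (which is vacuous here since $N \in (-\infty,0)\cup[n,\infty]$), and otherwise any $\eps$ with $|\eps| < \min\{1, \sqrt{(N-1)/(N-n)}\}$; for $N = \infty$ this is simply $|\eps| < 1$, and for $N = n$ any $\eps \in \R$ works. Then set
\begin{equation*}
    a := \exp\left(-\frac{2(1-\eps)}{n-1}h\right), \qquad b := \exp\left(\frac{2(1-\eps)}{n-1}h\right),
\end{equation*}
so that $b \geq a > 0$. Since $|\psi| \leq h$, the bound $\e^{-\frac{2(\eps-1)}{n-1}\psi} = \e^{\frac{2(1-\eps)}{n-1}\psi}$ lies between $a$ and $b$, which gives the pinching condition~\eqref{measure-pinching-3}. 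Moreover, because $\Ric_\psi^N \geq 0$, the variable curvature bound $\Ric_\psi^N(v) \geq -K\e^{\frac{4(\eps-1)}{n-1}\psi(x)}g(v,v)$ holds trivially with $K = 0$ (the non-negative curvature bound degenerates into the constant one, as the paper repeatedly notes).

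Next I would invoke Theorem~\ref{grad-est-eps-range} with this choice of $\eps$, $a$, $b$, and $K = 0$. Since $u$ is a smooth positive $\psi$-harmonic function, the theorem yields a constant $C_{16}$ depending on $c, a, b, n, K$ such that $|\nabla \log u| \leq C_{16}$. Finally I would observe that $c$ is determined by $n$ and $\eps$ via~\eqref{c_num}, that $\eps$ was chosen depending only on $N$, that $a$ and $b$ depend only on $n, h, \eps$ (hence only on $n, h, N$), and that $K = 0$; therefore $C_{16}$ can be taken as a constant $C_{19}$ depending only on $n, h, N$. This gives $|\nabla\log u| \leq C_{19}$.

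There is no real obstacle here: the only point requiring a moment's care is checking that the $\eps$-range for the relevant values of $N$ is non-empty and that the resulting $c$ from~\eqref{c_num} is positive --- both of which are already recorded in the preliminaries --- and confirming that all constants can be traced back to $n, h, N$ alone. The substance of the estimate is entirely contained in Theorem~\ref{grad-est-eps-range}; this corollary is just the specialization to the globally-bounded-weight, non-negative-curvature case.
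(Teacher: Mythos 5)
Your proposal is correct and is exactly the intended argument: the paper states this corollary without proof as an immediate specialization of Theorem \ref{grad-est-eps-range}, obtained by the same reduction used in the Neumann--Poincar\'e lemma of Section 5 (choose $\eps$ in the $\eps$-range with $\eps<1$, set $a=\e^{-\frac{2(1-\eps)}{n-1}h}$, $b=\e^{\frac{2(1-\eps)}{n-1}h}$, take $K=0$, and trace the constants back to $n,h,N$). No gaps; your bookkeeping of the constant dependencies matches the paper's conventions.
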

\begin{comment}
\begin{remark}\rm{
    Theorem \ref{grad-est-eps-range} recovers Theorem \ref{grad-est-thm}. Indeed, under the assumptions of Theorem \ref{grad-est-thm}, i.e., 
    $|\psi|\leq h$ for some positive constant and 
    \begin{equation*}
        \Ric_\psi^{\infty}\geq -(n-1),
    \end{equation*}
    we fix $\eps\in(-1,1)$, which is the $\eps$-range for $N = \infty$ and we set $ b = \exp\left( \frac{2(1-\eps)}{n-1}h\right)$ and $K = b^2(n-1)$. Since we have
    \begin{equation*}
        \Ric_\psi^{\infty}  \geq -(n-1) = -\frac{K}{b^2} \geq -K \e^{\frac{4(\eps-1)}{n-1}\psi},
    \end{equation*}
    we can apply Theorem \ref{grad-est-eps-range} and we obtain the gradient estimate.}
\end{remark}
\end{comment}

In \cite{soliton}, the volume growth estimate is obtained without imposing the boundedness of the weight function and this leads to the gradient estimate of $\psi$-harmonic functions in Theorem \ref{grad-est-thm}. 
In this section, we assume that the weight function is bounded and then, the volume growth estimate and functional inequalities with $\eps$-range are used to derive the gradient estimate of $\psi$-harmonic functions. Hence, our gradient estimate is not a direct generalization of Theorem \ref{grad-est-thm}.

\textbf{Acknowledgement}

I would like to express my deepest gratitude to my supervisor Shin-ichi Ohta, who gave me many valuable suggestions on preliminary versions of this paper.


\begin{thebibliography}{99}
    \bibitem{yasuaki} Yasuaki Fujitani. ``Some functional inequalities under lower Bakry-\'{E}mery-Ricci curvature bounds with $\varepsilon $-range." arXiv preprint arXiv:2211.12310 (2022).
    \bibitem{yasuaki2} Yasuaki Fujitani.  ``Aronson-B\'enilan gradient estimates for porous medium equations under lower bounds of $ N $-weighted Ricci curvature with $ N< 0$." arXiv preprint arXiv:2301.07622 (2023).
    \bibitem{gentil} Ivan Gentil and Simon Zugmeyer. ``A family of Beckner inequalities under various curvature-dimension conditions." Bernoulli 27.2 (2021): 751-771.
    \bibitem{milman} Alexander V. Kolesnikov and Emanuel Milman. ``Brascamp-Lieb-type inequalities on weighted Riemannian manifolds with boundary." The Journal of Geometric Analysis 27.2 (2017): 1680-1702.
    \bibitem{kuwae_li} Kazuhiro Kuwae and Xiang-Dong Li. ``New Laplacian comparison theorem and its applications to diffusion processes on Riemannian manifolds." Bulletin of the London Mathematical Society 54.2 (2022): 404-427.
    \bibitem{kuwae_sakurai} Kazuhiro Kuwae and Yohei Sakurai. ``Rigidity phenomena on lower $N$-weighted Ricci curvature bounds with $\eps$-range for nonsymmetric Laplacian." Illinois Journal of Mathematics 65.4 (2021): 847-868.
    \bibitem{p_li} Peter Li. ``Lecture notes on geometric analysis." No. 6. Seoul: Seoul National University, Research Institute of Mathematics, Global Analysis Research Center, 1993. 
    \bibitem{li-schoen} Peter Li and Richard M. Schoen. ``$L^p$ and mean value properties of subharmonic functions on Riemannian manifolds." Acta Mathematica 153 (1984): 279-301.
    \bibitem{lu2} Yufeng Lu, Ettore Minguzzi and Shin-ichi Ohta. ``Geometry of weighted Lorentz-Finsler manifolds I: Singularity theorems." Journal of the London Mathematical Society 104.1 (2021): 362-393.
    \bibitem{lu} Yufeng Lu, Ettore Minguzzi and Shin-ichi Ohta. ``Comparison theorems on weighted Finsler manifolds and spacetimes with $\eps$-range." Analysis and Geometry in Metric Spaces 10.1 (2022): 1-30.
    \bibitem{mai} Cong Hung Mai. ``On Riemannian manifolds with positive weighted Ricci curvature of negative effective dimension." Kyushu Journal of Mathematics 73.1 (2019): 205-218.
    \bibitem{munteau_wang-1} Ovidiu Munteanu and Jiaping Wang. ``Smooth metric measure spaces with non-negative curvature." Communications in Analysis and Geometry 19.3 (2011): 451-486.
    \bibitem{soliton}  Ovidiu Munteanu and Jiaping Wang. ``Analysis of weighted Laplacian and applications to Ricci solitons." Communications in Analysis and Geometry 20.1 (2012): 55-94.
    \bibitem{ohta-negative}  Shin-ichi Ohta. ``$(K, N)$-convexity and the curvature-dimension condition for negative $N$." The Journal of Geometric Analysis 26.3 (2016): 2067-2096.
    \bibitem{needle} Shin-ichi Ohta. ``Needle decompositions and isoperimetric inequalities in Finsler geometry." Journal of the Mathematical Society of Japan 70.2 (2018): 651-693.
    \bibitem{ohta-takatsu} Shin-ichi Ohta and Asuka Takatsu. ``Displacement convexity of generalized relative entropies." Advances in Mathematics 228.3 (2011): 1742-1787.
    \bibitem{pigola} Stefano Pigola, Marco Rigoli, and Alberto G. Setti. ``Vanishing theorems on Riemannian manifolds, and geometric applications." Journal of Functional Analysis 229.2 (2005): 424-461.
    \bibitem{yau-1} Richard M. Schoen and Shing-Tung Yau. ``Lectures on differential geometry." Vol. 2. Cambridge, MA: International press, 1994.
    \bibitem{wei-wylie} Guofang Wei and Will Wylie. ``Comparison geometry for the Bakry-Emery Ricci tensor." Journal of differential geometry 83.2 (2009): 337-405.
    \bibitem{wu} Jia-Yong Wu. ``Upper bounds on the first eigenvalue for a diffusion operator via Bakry-\'{E}mery Ricci curvature." Journal of mathematical analysis and applications 361.1 (2010): 10-18.
    \bibitem{jy-1} Jia-Yong Wu. ``$L^p$-Liouville theorems on complete smooth metric measure spaces." Bulletin des Sciences Mathématiques 138.4 (2014): 510-539.
    \bibitem{yoroshikin} William Wylie and Dmytro Yeroshkin. ``On the geometry of Riemannian manifolds with density." arXiv preprint arXiv:1602.08000 (2016).
    \bibitem{yau-1975} Shing-Tung Yau. ``Harmonic functions on complete Riemannian manifolds." Commun. Pure Appl. Math 28 (1975): 201-228.
    \bibitem{zhu} Shunhui Zhu. ``The comparison geometry of Ricci curvature." Comparison geometry (Berkeley, CA, 1993–94) 30 (1997): 221-262.
\end{thebibliography}
\end{document}